\theoremstyle{plain}
\newtheorem{theorem}{Theorem}[section]
\newtheorem{lemma}[theorem]{Lemma}
\newtheorem{proposition}[theorem]{Proposition}
\newtheorem{corollary}[theorem]{Corollary}
\theoremstyle{definition}
\newtheorem{definition}[theorem]{Definition}
\newtheorem{question}[theorem]{Question}
\newtheorem{example}[theorem]{Example}
\newtheorem{assumption}[theorem]{Assumption}
\newtheorem{remark}[theorem]{Remark}
\numberwithin{equation}{section}
\DeclareFontFamily{U}{mathx}{\hyphenchar\font45}
\DeclareFontShape{U}{mathx}{m}{n}{
      <5> <6> <7> <8> <9> <10>
      <10.95> <12> <14.4> <17.28> <20.74> <24.88>
      mathx10
      }{}
\DeclareSymbolFont{mathx}{U}{mathx}{m}{n}
\DeclareMathAccent{\widecheck}{0}{mathx}{"71}
\DeclareMathAccent{\wideparen}{0}{mathx}{"75}
\setlist{nosep}
\newcommand{\Law}{\mathcal{L}}
\newcommand{\Prob}{\mathbb{P}}
\newcommand{\Exp}{\mathbb{E}}
\newcommand{\Space}{\mathcal{S}}
\newcommand{\indf}{\mathds{1}}
\newcommand{\F}{\mathcal{F}}
\newcommand{\calG}{\mathcal{G}}
\renewcommand{\k}{\kappa}
\renewcommand{\b}{\beta}
\newcommand{\calT}{\mathcal{T}}
\newcommand{\X}{\mathcal{X}}
\newcommand{\Y}{\mathcal{Y}}
\newcommand{\bbN}{\mathbb{N}}
\newcommand{\bbV}{\mathbb{V}}
\newcommand{\B}{\mathcal{B}}
\newcommand{\N}{\mathbb{N}}
\newcommand{\R}{\mathbb{R}}
\newcommand{\calI}{\pmb{\mathscr{I}}}
\newcommand{\calboldI}{\pmb{\mathcal{I}}}
\newcommand{\frakI}{\pmb{\mathfrak{I}}}
\newcommand{\calItilde}{\widetilde{\pmb{\mathscr{I}}}}
\renewcommand{\P}{\mathcal{P}}
\newcommand{\ER}{\mathsf{ER}}
\newcommand{\FE}{\mathsf{FE}}
\newcommand{\CM}{\mathsf{CM}}
\newcommand{\NZ}{\bbN_0}
\newcommand{\GNM}{\calG_{n, m_n}}
\newcommand{\GER}{\calG(n, \k/n)}
\newcommand{\GCM}{\calG_n(\alpha_n)}
\newcommand{\dego}{\text{deg}_{\TB}(o)}
\newcommand{\Conn}{\mathsf{C}}
\newcommand{\cl}{\mathsf{cl}}
\newcommand{\Gstar}{\mathcal{G}_{*}}
\newcommand{\Gdstar}{\mathcal{G}_{**}}
\newcommand{\Tdstar}{\mathcal{T}_{**}}
\newcommand{\EDRinf}{\Exp_{\rho}[\dego]}
\renewcommand{\r}[1]{\rho_{#1}}
\newcommand{\SBR}[1]{\bar{\rho}_{#1}}
\newcommand{\SBeta}[1]{\bar{\eta}_{#1}}
\newcommand{\SBRstar}[1]{\bar{\rho}^*_{#1}}
\newcommand{\PIR}[1]{\pi_{\rho_{#1}}}
\newcommand{\PIRstar}[1]{\pi_{\rho^*_{#1}}}
\newcommand{\PIeta}{\pi_{\eta_1}}
\newcommand{\PIetaalp}{\pi_{\etaa_1}}
\newcommand{\PIetab}[1]{\pi_{\eta_1^\text{Poi}({#1})}}
\newcommand{\etaa}{\eta^\alpha}
\newcommand{\etab}[1]{\eta^{\text{Poi}({#1})}}
\newcommand{\SBRM}[1]{\SBR{#1}^{1,o}}
\newcommand{\SBRMZero}[1]{\SBR{#1}^{o}}
\newcommand{\SBRMOne}[1]{\SBR{#1}^{1}}
\newcommand{\SBRMOneGivenZero}[1]{\SBR{#1}^{1|o}}
\newcommand{\SBRstarM}[1]{(\SBRstar{#1})^{1,o}}
\newcommand{\SBRstarMOne}[1]{(\SBRstar{#1})^{1}}
\newcommand{\SBRstarRM}[1]{(\SBRstar{#1})^{1}}
\newcommand{\SBetaM}{\SBetabarM}
\newcommand{\SBetaMI}{\SBetabarM}
\newcommand{\sqpr}[1]{\left[{#1}\right]}
\newcommand{\cpr}[1]{\left({#1}\right)}
\newcommand{\pr}[1]{\left\{{#1}\right\}}
\newcommand{\T}[1]{\calT_{*,{#1}}[\X ; \Y]}
\newcommand{\G}[1]{\calG_{*,{#1}}[\X ; \Y]}
\newcommand{\NO}{\dego}
\newcommand{\PIMUT}{\tilde{\pi}_{\mu}}
\newcommand{\SBRRMh}[1]{\bar{\rho}_{#1}^1}
\newcommand{\SBRCRMh}[1]{\bar{\rho}_{#1}^{1|o}}
\newcommand{\SBMuMZero}{\bar{\mu}^o}
\newcommand{\SBMuM}{\bar{\mu}^{1,o}}
\newcommand{\SBMuRM}{\bar{\mu}^{1}}
\newcommand{\SBMuCRM}{\bar{\mu}^{1|o}}
\newcommand{\SBetabarCRM}{\bar{\eta}_1^{1|o}}
\newcommand{\SBetabarRM}{\bar{\eta}_1^{1}}
\newcommand{\SBetabarM}{\bar{\eta}_1^{1,o}}
\newcommand{\SBetabarMOne}{\bar{\eta}_1^{1}}
\newcommand{\SBetabarMZero}{\bar{\eta}_1^{o}}
\newcommand{\SBetabarMOneGivenZero}{\bar{\eta}_1^{1|o}}
\newcommand{\Tinf}{\calT_{*}[\X; \Y]}
\newcommand{\Ginf}{\calG_{*}[\X; \Y]}
\newcommand{\SBRinf}{\bar{\rho}}
\newcommand{\RP}{\widecheck{\rho}}
\newcommand{\RCP}{\widehat{\rho}}
\newcommand{\MP}{\widecheck{\mu}}
\newcommand{\MCP}{\widehat{\mu}}
\newcommand{\MPdef}{\widecheck{\mu}[\eta_1]}
\newcommand{\MCPdef}{\widehat{\mu}[\eta_1]}
\newcommand{\PUK}{\P_u^\k(\Tinf)}
\newcommand{\PK}{\P^\k(\Tinf)}
\newcommand{\PB}{\P^\beta(\Tinf)}
\newcommand{\PUB}{\P_u^\beta(\Tinf)}
\newcommand{\PU}[1]{\widehat{P}_{#1, \r{#1}}}
\newcommand{\UGWR}[1]{\mathsf{UGWT}(\rho_{#1})}
\newcommand{\Rstar}[1]{\rho^*_{#1}}
\newcommand{\EDLDRinf}{\Exp_{\rho} \sqpr{\dego \log (\dego)}}
\newcommand{\EDLDR}[1]{\Exp_{\rho_{#1}}\sqpr{\dego \log (\dego)}}
\newcommand{\PIRM}[1]{\pi_{\rho_{#1}}^1}
\newcommand{\PIRstarM}[1]{\pi_{\rho^*_{#1}}^1}
\newcommand{\Geom}{\mathsf{Geom}_{1/2}}
\newcommand{\GNA}{\calG_n(\alpha_n)}
\newcommand{\kvr}{\vec{\k}_{\rho}}
\newcommand{\bvr}{\vec{\b}_{\rho}}
\newcommand{\Gnm}{\calG_{n, m_n}}
\newcommand{\GB}{\mathbf{G}}
\newcommand{\TB}{{ \boldsymbol{\scalebox{1.2}[1.2]{$\tau$}}}}
\newcommand{\TXY}{\TB}
\newcommand{\TBdef}{\tilde{\tau}}
\newcommand{\TX}{\calT_{*,1}[\X]}
\newcommand{\SB}{\boldsymbol{\sigma}}
\newcommand{\TOV}{\tau(v \setminus o)}
\newcommand{\TVO}{\tau(o\setminus v)}
\newcommand{\TBOV}{\TB(v \setminus o)}
\newcommand{\TBVO}{\TB(o \setminus v)}
\newcommand{\TBdefOV}{\TBdef(v \setminus o)}
\newcommand{\TBdefVO}{\TBdef(o \setminus v)}
\newcommand{\TBOOne}{\TB(1 \setminus o)}
\newcommand{\TBOneO}{\TB(o \setminus 1)}
\newcommand{\TOU}{\tau(u \setminus o)}
\newcommand{\TUO}{\tau(o \setminus u)}
\newcommand{\TBUO}{\TB(o \setminus u)}
\newcommand{\bfX}{{\bf X}}
\newcommand{\bfY}{{\bf Y}}
\newcommand{\TXM}{\calT_{*,1}^M[\X]}
\newcommand{\lambdavec}{\vec{\lambda}}
\newcommand{\Erdos}{Erd\H{o}s-R\'enyi }
\newcommand\numberthis{\addtocounter{equation}{1}\tag{\theequation}}
\title[Large deviations of marked random graphs]{On the large deviation rate function for marked sparse random graphs}\thanks{The authors were supported by the Office of Naval Research under the Vannevar Bush Faculty Fellowship
N0014-21-1-2887. The first author was also supported by the National Science Foundation under grant
DMS-2246838.}
\thanks{Date: 23 December 2023}
\author{Kavita Ramanan}
\author{Sarath Yasodharan}
\address{Division of Applied Mathematics, Brown University, Providence, RI 02912, USA}
\email{kavita\_ramanan@brown.edu, sarath\_yasodharan@brown.edu}
\begin{document}
\maketitle
\begin{abstract}
We consider (annealed) large deviation principles for component empirical measures of several families of marked sparse random graphs, including (i) uniform graphs on $n$ vertices with a fixed degree distribution; (ii) uniform graphs on $n$ vertices with a fixed number of edges; (iii) Erd\H{o}s-R\'enyi $G(n, c/n)$ random graphs. Assuming that edge and vertex marks are independent, identically distributed, and take values in a finite state space, we show that the large deviation rate function admits a concise representation as a sum of relative entropies that quantify the cost of deviation of a probability measure on marked rooted graphs from certain auxiliary independent and conditionally independent versions. The proof exploits unimodularity, the consequent mass transport principle, and random tree labelings to express certain combinatorial quantities as expectations with respect to size-biased distributions, and to identify unimodular extensions with suitable conditional laws. We also illustrate how this representation can be used to establish Gibbs conditioning principles that provide insight into the structure of marked random graphs conditioned on a rare event. Additional motivation for this work arises from the fact that such a representation is also useful for characterizing the annealed pressure of statistical physics models with general spins, and large deviations of evolving interacting particle systems on sparse random graphs.

\vspace{10pt}

\noindent \textbf{Keywords:} Marked random graph, large deviations principle, component empirical measure, unimodular measure, relative entropy, random regular graph, configuration model, \Erdos random graph, Gibbs conditioning principle\\
\noindent \textbf{MSC 2020 subject classifications:} Primary 60F10, 05C80\\ 
\end{abstract}

\section{Introduction}
\subsection{Context and description of results} 
Large deviation principles (LDPs) characterize the exponential decay rate of a sequence of probabilities of rare events in a Polish space ${\mathcal S}$ in terms of an optimization problem involving an associated function from ${\mathcal S}$ to  $[0,\infty]$ called the rate function. An LDP with a tractable rate function facilitates the computation of rare event probabilities and the study of limit laws conditioned on a rare event. Such conditional limit laws, often referred to as  Gibbs conditioning principles, provide insight into the most likely way in which a rare event happens. The focus of this work is on  LDPs for the neighborhood (and more general component) empirical measures of sequences of sparse random graphs with independent and identically distributed (i.i.d.) vertex and edge marks taking values in a finite state space. We consider three classes of random graph sequences: the  $\CM$ sequence comprised of graphs generated by the configuration model (i.e., uniform random graphs with a given empirical degree distribution), which includes random regular graphs as a special case, the $\FE$ sequence comprised of uniform random graphs with a fixed number of edges, and the $\ER$ sequence consisting of sparse \Erdos $G(n,c/n)$ graphs (see Section \ref{sec:model-iid} for definitions). Our main result shows that in each case, the rate function of the LDP for the neighborhood and component empirical measures takes a similar succinct form. Such a representation has many applications, as elaborated in Section \ref{subs-ram}.

 \begin{figure}
    \centering
    \begin{center} 
\begin{tikzpicture}[scale = 0.8]
\filldraw[black] (0,0) circle(3pt);
\filldraw[black] (-1,-1) circle(3pt);
\filldraw[black] (0,-1) circle(3pt);
\filldraw[black] (1,-1) circle(3pt);
\node at (0,0.3){$o$};
\node at (0.4,0){$x_o$};
\node at (-1,-1.4){$x_1$};
\node at (0,-1.4){$x_2$};
\node at (1,-1.4){$x_3$};
\draw[black, thick] (0,0) -- (-1,-1);
\draw[black, thick] (0,0) -- (0,-1);
\draw[black, thick] (0,0) -- (1,-1);
\end{tikzpicture}
\end{center}
    \caption{An illustration of a marked $3$-star rooted at $o$. The root mark is denoted by $x_o$ and the marks on the neighbors of the root are denoted by $x_1, x_2$, and $x_3$.}
    \label{fig:marked-star}
\end{figure}
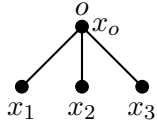

To describe our results, fix $\kappa \geq 2$ and let $G_n$ be the random $\kappa$-regular graph on $n$ vertices equipped with i.i.d.\,vertex marks $\{X_v\}_{v \in G_n}$  taking values in a finite set ${\mathcal S}$ with common marginal $\nu$. Let $L_n$ represent the (random) marked neighborhood empirical measure, defined  by
\[  L_n := \frac{1}{n} \sum_{v \in G_n} \delta_{(X_v, X_{\partial v})}, \]
where $X_{\partial v} := \{X_u, u \sim v \},$ and  $u \sim v$ indicates that  $u$ and $v$ are neighbors in $G_n$. As is well known by Sanov's theorem \cite{San57}, the sequence of empirical measures of just the vertex marks $\frac{1}{n} \sum_{v \in G_n} \delta_{X_v}$, $n \in \N$, satisfies an LDP with rate function equal to $H(\cdot\|\nu)$, the relative entropy functional with respect to $\nu$. The challenge in proving an LDP for the sequence $\{L_n\}$ arises from the dependencies between $X_{\partial v}, v \in V,$ caused by overlaps in vertex neighborhoods, which depend on the random graph topology. In a seminal work \cite{BorCap15}, inspired by the well-known configuration model used to generate random graphs from a given degree sequence, Bordenave and Caputo introduced a novel colored configuration model and used combinatorial arguments to establish LDPs for the component empirical measure of {\em unmarked} $\CM$, $\FE$ and $\ER$ random graph sequences. Their approach was adapted to prove LDPs in the more complicated setting of {\em marked} graphs (with discrete i.i.d.\,marks) for the $\FE$ and $\ER$ random graph sequences in \cite{DelAna19,BalOlietal22}, and subsequently for a broad class of uniform marked random graphs, including the  $\CM$, bipartite and stochastic block model, in \cite{CheRamYas23}. For random $\kappa$-regular graphs, it is shown in \cite{CheRamYas23} that the sequence  $\{L_n\}$ satisfies an LDP on ${\mathcal S} \times {\mathcal S}^{\kappa}$, representing the space of marks on $\kappa$-stars (see Figure \ref{fig:marked-star}), with a rate function ${\mathcal J}$ that takes the following form for probability measures $\mu$ on ${\mathcal S} \times {\mathcal S}^{\kappa}$ satisfying certain symmetry conditions:
\begin{equation}
{\mathcal J}(\mu) =  \bar{s}(\kappa)  +  H(\mu^o \| \nu) +  H(\mu^o) - H(\mu)  + \frac{\kappa}{2} H(\mu^{1,o}) + 
\sum_{x, x' \in {\mathcal S}} \Exp_{\mu} [ \log \cpr{\sqpr{ E_1(x, x')(\TB)}!}].  \label{eqn:rate-fn-illusrate}
\end{equation}
Here $\bar{s}(\kappa) := \kappa (\log \kappa - 1)/2 - \log ( \kappa!)$, $\mu^o$ represents the root mark marginal, $\mu^{1,o}$ represents the marginal on a (uniformly chosen) neighbor of the root and the root, $H$  represents the entropy functional, $H(\cdot\|\nu)$ is the relative entropy functional mentioned above, and $E_1(x,x')(\TB)$ is a certain combinatorial quantity that is a function of the marked $\kappa$-star $\TB$. It is worth emphasizing that the form of the rate function is more complicated in the marked setting. Indeed, the rate function for the neighborhood empirical measure on {\em unmarked} graphs is rather simple:  for $\kappa$-regular graphs, it is degenerate (taking the value zero at the point mass on $\kappa$ and infinity otherwise) and for other graph ensembles, it is equal to relative entropy with respect to the true root degree distribution, with appropriate constraints. The expressions for the rate functions for marked $\CM$, $\FE$ and $\ER$ models obtained in \cite{CheRamYas23} and \cite[Theorems 4.1 and 4.2]{BalOlietal22} have a similar complexity as that in \eqref{eqn:rate-fn-illusrate} for $\kappa$-regular graphs, though with an extra term that accounts for randomness in the root degree distribution, and the precise form depending on the graph sequence. In the presence of edge marks and when considering the full component empirical measure, further additional combinatorial terms arise in the rate functions of \cite{BalOlietal22,CheRamYas23}. Such descriptions make it challenging to analyze optimization problems involving the rate function to compute or gain insight into probabilities of rare events. 

In this article, we show that in fact the rate function admits a more tractable expression, involving only relative entropies. As a special case of our main result (Theorem \ref{thm:iid-nbd}) we show  that the rate function ${\mathcal J}$ in  \eqref{eqn:rate-fn-illusrate} associated with the $\kappa$-regular random graph with i.i.d.\,vertex marks takes the following simple form: for  $\mu \in {\mathcal P}({\mathcal S} \times {\mathcal S}^{\kappa})$ such that both the second marginal of $\mu$ and the marginal $\mu^{1,o}$ are symmetric, we show 
\begin{equation}
\label{eqn:rate-fn-simple2}
{\mathcal J} (\mu) =   \frac{1}{2} [ H (\mu \| \MP)  +  H (\mu \| \MCP) ],
\end{equation}
where  $\MP (x_o, (x_u, u \sim o))  = \nu(x_o) \prod_{u \sim o} \mu^{1} (x_u)$ and $\MCP (x_o, (x_u, u \sim o))  = \nu(x_o) \prod_{u \sim o} \mu^{1|o} (x_u|x_o)$. The form in \eqref{eqn:rate-fn-simple2} provides an intuitive interpretation of the rate function as the cost of $\mu$ deviating from its product and conditionally independent versions. Since the vertex mark empirical measure $\frac{1}{n} \sum_{v \in G_n} \delta_{X_v}$ is a projection of the neighborhood empirical measure $L_n$ onto the root mark, one should be able to apply the contraction principle to immediately recover Sanov's theorem from the LDP for the neighborhood empirical measure.  While this is indeed the case when the rate function is of the form \eqref{eqn:rate-fn-simple2}, it is not as apparent given the form in \eqref{eqn:rate-fn-illusrate}. Further, the expression for the rate function remains the same as in \eqref{eqn:rate-fn-simple2} in the presence of edge marks, and also in the more general setting of $\CM$, $\FE$ or $\ER$ graph sequences, though with $\MP$ and $\MCP$ suitably redefined, and an additional relative entropy term present in the $\ER$ setting to account for deviations of the mean root degree from that under the true law. In the absence of edge marks, the form \eqref{eqn:rate-fn-simple2} can be shown to take an even simpler form for all graph sequences (see Corollary \ref{cor-kapparf}).

Our results also extend to the component empirical measure. In this case, the corresponding rate function is represented as a countable sum, with each summand capturing deviations of the depth $h$-neighborhood of the root from the true law via a form analogous to that in \eqref{eqn:rate-fn-simple2}, although the definitions of the analogs of $\MP$ and $\MCP$ are now more involved, entailing unimodular extensions of marked graphs (see definitions \eqref{eqn:RPh-def}-\eqref{eqn:RCPh} and Theorem \ref{thm:iid}). Nevertheless, the rate function has a common form for all three graph ensembles, providing a Sanov-like theorem for unimodular sparse graphs, with the quantity in \eqref{eqn:rate-fn-simple2} representing a fundamental {\em divergence functional} on the space of probability measures on rooted graphs with respect to a ``true law'' of the graph with i.i.d.\,vertex and edge marks.

\subsection{Comments on the proof}

The proof of our main result is carried out in two steps. In the first step, we establish an intermediate form of the rate function in terms of {\em differences of relative entropies}; see \eqref{eqn:calItilde-generic} and Theorem \ref{thm:iid-old}. An analogous representation holds for unmarked graphs \cite[Remark 5.13]{BorCap15}, but the proof in the marked case is considerably more difficult because, as mentioned above, the corresponding form \eqref{eqn:rate-fn-illusrate} of the rate function has many additional terms that are not present in the unmarked case. This entails establishing representations of combinatorial quantities, similar to those appearing in \eqref{eqn:rate-fn-illusrate},  
in terms of relative entropies with respect to the true law. While this intermediate form has a useful counting-based interpretation, and is also much more tractable than the combinatorial form in \eqref{eqn:rate-fn-illusrate}, 
it involves relative entropies of different signs, and even its non-negativity is not immediate.

The next step, which involves showing that the intermediate form of the rate function coincides with the expression in \eqref{eqn:rate-fn-simple2}, is considerably more involved. First, we show that the auxiliary quantities $\MCP$ and $\MP$ for the component empirical measure are indeed probability measures. 
In the case of vertex-marks they are specified explicitly via the definition below \eqref{eqn:rate-fn-simple2}, and so this is immediate. However, in the presence of edge-marks and for component measures, they are only defined by specifying their densities with respect to another measure, and hence it requires work to show that they are indeed probability measures. Next, we identify probability measures on equivalence classes of rooted trees with corresponding probability measures on randomly {\em labeled} rooted trees. We then exploit resultant exchangeability properties to show that certain covariance measures coincide with suitable sized-biased marginals (see \cite[Section 4.2]{BacBorSze22}) and to identify unimodular extensions of certain probability measures arising in the definition of the rate function with associated conditional laws (see Remark \ref{rmk:unimod-full-extension}(1)). Throughout, we crucially exploit the {\em unimodularity} property of marked random graphs and the associated mass transport principle (see Section \ref{sec:alternate-form}). In the course of the proof, we also arrive at an alternative characterization of a combinatorial quantity called the microstate entropy that arises in \cite{BorCap15,BacBorSze22}, which may be of independent interest (see Corollary \ref{cor:sigma-alternative}).
\subsection{Motivation and applications} 
\label{subs-ram}
There are multiple motivations for obtaining a simple, easily interpretable form of the rate function. We discuss three of these below:

\noindent {\em A. Gibbs conditioning principles:} Consider the following question: what is the distribution of the random graph sequence and marks, when conditioned on the event that the fraction of vertices for which the sum of marks on neighboring vertices is much larger than its mean? In Proposition \ref{prop-gcp}, we illustrate how our new form of the rate function can be exploited to provide a rather explicit answer to this question for the $\CM$ graph sequence. In many discrete-time locally interacting particle systems on a sparse graph, transition probabilities of the process are governed by the sum (or some other functional) of neighboring marks. Thus, conditional limit laws of this type would provide insight into how rare events occur in such interacting particle systems. More examples of such  Gibbs conditioning principles will be elucidated in forthcoming work.  \\

\noindent {\em B. Variational formulas for characterizations of the annealed pressure:} In a companion work \cite{annealed-future-paper}, the neighborhood empirical measure LDP for graphs with more general (possibly continuous) i.i.d.\,marks is shown to hold in a stronger topology. The latter form is then exploited to obtain variational formulas and characterize the annealed pressure of a  class of statistical physics models on sparse random graphs with general (not necessarily discrete) spins.  While such variational formulas are easily derived in the discrete case using elementary combinatorics, consideration of continuous spin models naturally entails a large deviations analysis of marked empirical measures.  \\

\noindent {\em C. Extension to Polish mark spaces:}  It is natural to ask if one can also extend the LDP for the component empirical measure to the case of i.i.d.\,marks on general Polish spaces. The old form of the rate function in \eqref{eqn:rate-fn-illusrate} admits an interpretation only for discrete marks. An approximation argument was used in \cite[Theorems 4.3, 4.4]{BalOlietal22} to establish the LDP for continuous marks, yielding a non-explicit rate function, expressed in terms of limits. In contrast, the definition of the rate function in \eqref{eqn:rate-fn-simple2} automatically extends to general Polish space marks. It is natural to conjecture that this extension in fact coincides with the rate function of the LDP for the component measure with general marks. While (as mentioned above) this conjecture can be verified for the neighborhood empirical measure (see \cite{annealed-future-paper}), the case of the component empirical measure remains open, but the form of the rate function in \eqref{eqn:rate-fn-simple2} potentially allows for a different proof approach. Such a result would have important implications for understanding large deviations and conditional limit laws for a broad class of evolving interacting particle systems for which hydrodynamic limits have been established in \cite{OliReiSto20,LacRamWu20,LacRamWu23,GanRam22-Hydro,CocRam23,Ram22-ICM} (e.g., see the application to interacting gradient diffusions in \cite[Theorem 4.9]{BalOlietal22}).            \\

\subsection{Related work}
Finally, we mention some related literature on LDPs for sparse random graphs. In \cite{DokMor10,IbrDok21}, combinatorial arguments were used to establish an  LDP for the joint vertex-edge empirical measure in marked \Erdos and random regular graphs with i.i.d.\,marks in a discrete space. This LDP was extended in \cite{LiaRam24} to general marks by approximation. The corresponding rate functions are in terms of relative entropies, but they involve certain ``sub-consistency'' conditions. None of these works consider the component empirical measure. In \cite{DemMorShe05}, an   LDP for the (vertex, children) empirical measure of a “tree-indexed Markov chain'' was established. Several other LDPs have been established for {\em unmarked} sparse random graphs that have completely different motivations. For example, the works of \cite{Oco98,Puh05,ChaHof21,GanHieNam22,AndKonLanPat23} are motivated by combinatorial questions concerning the prevalence of various kinds of subgraphs and component sizes in sparse random graphs. These complement earlier work addressing analogous questions for dense graphs such as \cite{ChaVar11}, which uses the theory of graphons, and large deviations for sparse graphs (but not uniformly sparse, as considered here) using the theory of nonlinear large deviations \cite{ChaDem16}. The speed of the LDP in \cite{Oco98,Puh05,ChaHof21,GanHieNam22,AndKonLanPat23} depends on the structure and size of the subgraph of interest. In contrast, our LDPs are always at speed $n$ and the focus in this article is on {\em marked} random graphs, which (as mentioned above) is required for understanding large deviations for interacting particle systems on sparse random graphs. The latter theory is inchoate in comparison to the well developed theory in the ``mean-field'' setting (of interacting particle systems on the complete graph).  

\subsection{Organization}

The rest of this paper is organized as follows. In Section \ref{sec:notation}, we recall some preliminaries on rooted marked graphs and probability measures on them, and introduce some notation. We define the model and quantities of interest and state our main results on LDPs for neighborhood and component empirical measures in Sections \ref{sec:results-nbd} and \ref{sec:results-com}, respectively. Although the neighborhood empirical measure LDPs follow from the more general component measure LDPs, we state them separately for ease of readability and to defer the substantial additional notation required to state the full component measure result. Section \ref{sec:gibbs} contains the statement of a conditional limit theorem for the neighborhood empirical measure for the $\CM$ random graph sequence; its proof is relegated to Section \ref{sec:gibbs-proof}. The proof of the intermediate representation is given in Section \ref{sec:iid-proof-complete}. The proof of the neighborhood and component empirical measure LDPs are given in Sections \ref{sec:alternate-form-1} and \ref{sec:alternate-form-h}, with several technical results relegated to Appendices \ref{appendix:lemma-finiteness}-\ref{appendix:lemma-Rstarabscont}.

\section{Preliminaries and notation}
\label{sec:notation}
\subsection{Generic notation}
Let $\bbN$ denote the set of natural numbers, let $\R$ denote the set of real numbers, and define $\NZ := \bbN \cup \{0\}$.  For $x \in \R$, we write $x^+ = \max\{x, 0\}$ and $x^- = -\min\{x, 0\}$. For a finite set $A$, let $|A|$ denote its cardinality. Let $\indf\{\cdot\}$ denote the indicator function. Given a Polish space  $(\Space, d)$, let $\B(\Space)$ denote its Borel $\sigma$-field. Let $\P(\Space)$ denote the space of probability measures on $(\Space, \B(\Space))$ equipped with the  L\'evy-Prohorov metric (which we denote by $d_{\P(\Space)}$), which generates the weak topology on $\P(\Space)$. If $\nu \in \P(\Space)$ and $f$ is a $\nu$-integrable function on $\Space$, let $\langle \nu ,  f \rangle $ denote $\int_{\Space} f d\nu$.  If $\Space$ is either a  finite or countable set and  $\nu \in \P(\Space)$, we denote $H(\nu)$ to denote the Shannon entropy of $\nu$:
\begin{align}
H(\nu) = -\sum_{x \in \Space} \nu(x) \log \cpr{\nu(x)},
\label{eqn:def-entropy}
\end{align}
with the convention $0 \log 0 = 0$. If $\Space$ is  a Polish space and $\mu, \nu \in \P(\Space)$, the relative entropy of $\mu$ with respect to $\nu$ is given by
\begin{align}
H(\mu \| \nu) = \begin{dcases}
\int_\Space \log \cpr{\frac{d\mu}{d\nu}} \, d\mu & \text{if } \mu \ll \nu,\\
\infty & \text{otherwise}.
\end{dcases}
\label{eqn:def-renelt}
\end{align}

Let $(\Omega, \F, \Prob)$ be a complete probability space. The expectation with respect to $\Prob$ is denoted by $\Exp$. This probability space is assumed to be rich enough to support all random elements defined in this paper. For a random variable $\bfX$, let $\Law(\bfX)$ denote its law. Also, we adopt the convention that the sum over the empty set is $0$ and the product over the empty set is $1$.

\subsection{The space of rooted marked graphs}
An (undirected) graph is denoted by $G = (V, E)$, where $V$ is the vertex set and $E$ is the edge set. For a vertex $v \in V$, let $N_v(G) := \{u \in V: \{u, v\} \in E\}$ denote the set of all neighbors of $v$ in $G$. The degree of vertex $v$ is the cardinality of $N_v(G)$.   The vertex set $V$ is either a finite set or a countable set, and all graphs considered in this paper are locally finite; that is, each vertex has a finite degree. For $u, v \in V$, the distance between $u$ and $v$ is the length of the shortest path connecting $u$ and $v$. Two graphs $G_1 = (V_1, E_1)$ and $G_2 = (V_2, E_2)$ are said to be isomorphic (denoted by $G_1 \simeq G_2$) if there exists a one-to-one map $\varphi: V_1 \to V_2$ such that $\{u, v\} \in E_1$ if and only if $\{\varphi(u), \varphi(v)\} \in E_2$.

A connected graph $G$ with a distinguished vertex $o$ is denoted by $(G, o)$. Such a graph is called a {\em rooted graph}, and the vertex $o$ is called its root. The rooted graph  $(G_1, o_1)$ is said to be isomorphic to the rooted graph $(G_2, o_2)$ if there exists a one-to-one map $\varphi : V_1 \to V_2$ such that $\varphi(o_1) = o_2$ and $\{u, v\} \in E_1$ if and only if $\{\varphi(u), \varphi(v)\} \in E_2$. Here, $\varphi$ is called an isomorphism from $(G_1, o_1)$ to $(G_2, o_2)$, and this equivalence relation is denoted by $(G_1, o_1) \simeq (G_2, o_2)$.   Let $\Gstar$ denote the set of equivalence classes of rooted graphs under the above equivalence relation. Alternatively, we view $\Gstar$ as the space of all {\em unlabeled} rooted graphs. For $(G, o) \in \Gstar$, let $(G, o)_h$,  $h \in \NZ$, denote the subgraph of $G$ rooted at $o$ containing all vertices at a distance of at most $h$ from the root. 
We equip $\Gstar$ with the {\em local topology}, which makes $\Gstar$ a Polish space (see \cite[Section 2]{AldLyo07} or \cite[Section 3.2]{Bor16} for precise definitions).

We now define corresponding marked graphs. Let $\X$ and $\Y$ be nonempty finite sets. Let $\Ginf$ denote the set of unlabeled rooted {\em marked} graphs,  where each vertex is assigned an $\X$-valued mark and each directed edge is assigned a  $\Y$-valued mark. An element of $\Ginf$ is denoted by $(G, o, (x, y))$, where $o$ is the root,  $\{x_v\}_{v \in G}$ are the vertex marks, and $\{(y_{(u,v)}, y_{(v,u)})\}_{\{u,v\} \in E}$ are the  edge marks. Here, $y_{(u,v)}$ (resp. $y_{(v,u)}$) is the mark on the edge $\{u,v\}$ associated with the vertex $u$ (resp. $v$); hence, each undirected edge gets a $\Y \times \Y$-valued mark. We shall also denote a typical element of $\Ginf$ by $(G, (x, y))$, or simply $G$. For $h \in \NZ$, let $(G, o, (x, y))_h$ denote the marked rooted subgraph of $(G, o, (x, y))$ containing all vertices at a distance of at most $h$ from the root $o$.  As in the unmarked case, $\Ginf$ can be equipped with the local topology, and made a Polish space. Also, for $h \in \NZ$, let $\G{h} \subset \Ginf$ denote the space of rooted marked graphs with depth at most $h$ (i.e., every vertex is at most at a distance $h$ from the root), and we equip it with the subspace topology.

Let $\Tinf \subset \Ginf$ denote the set of rooted marked trees. Also, for $h \in \NZ$, let $\T{h} \subset \Tinf$ denote the set of rooted marked trees of depth at most $h$; for $h = 0$, $\T{0}$ is the set $\X$. A typical element of $\Tinf$ will generally be denoted by $(\tau, o, (x, y))$,  $(\tau, (x, y))$, or simply $\tau$, depending on the context. For $\tau \in \Tinf$, let both $\text{deg}_{\tau}(o)$ and $|N_o(\tau)|$ denote the degree of the root of $\tau$.

We now define certain subgraphs associated with a (marked) graph. Let  $(G, (x, y))$ be an (unrooted) graph with $\X$-valued vertex marks  $\{x_v\}_{v \in G}$ and  $\Y \times \Y$-valued  edge marks $\{(y_{(u,v)}, y_{(v,u)})\}_{\{u,v\} \in E}$. For a vertex $v \in G$, let $\Conn_v(G, (x, y))$ denote the marked connected component containing the vertex $v$ rooted at $v$,  viewed as an element of $\Ginf$. Also, let $\cl_v(G, (x, y))$ denote the marked depth $1$ tree rooted at $v$ viewed an element of $\T{1}$. 

Throughout this paper, specific (deterministic) marked trees will be denoted by $\tau$, $\tau^\prime$, etc., and $\Tinf$-valued random elements will be denoted by $\TB$, $\TB^\prime$, etc. We will consider both marked graphs on a finite vertex set (whose vertices are labeled) and elements of $\Ginf$ (i.e., unlabeled rooted marked graphs) -- whether a marked graph belongs to the former or latter set should be clear from the context.

\subsubsection{Ulam-Harris-Neveu labeling for rooted trees}
\label{sec:labeling}
We now describe the Ulam-Harris-Neveu random labeling scheme (see \cite[Section VI.2]{Har63}) which produces a random rooted marked labeled tree, with vertices assigned labels from the set
$\bbV = \{o\}\cup \left(\bigcup_{k=1}^\infty \N^k\right)$ in the manner described below.  (These vertex labels should not be confused with vertex marks.) Given $(\tau, o, (x,y)) \in \Tinf$,  the root is assigned the label $o$. Labels are then assigned to the other vertices based on a breadth-first search starting from the root with ties broken uniformly at random. In other words, the vertices in $N_o(\tau)$ are assigned a label from $\{1, 2, \ldots, |N_o(\tau)|\}$ uniformly at random. Now, suppose that for some $h \geq 1$, all vertices at a distance less than or equal to $h$ from the root have been assigned labels. Also, for $u = (u_1, \ldots, u_i) \in \N^i$ and $v  = (v_1, \ldots, v_j) \in \N^j$, let $uv \in \N^{i+j}$ denote the concatenation $(u_1, \ldots, u_i, v_1, \ldots, v_j)$.
Then, fix any vertex $v$ that is at a distance $h+1$ from the root and let $p(v)$ denote its parent (i.e., the unique neighbor of $v$ whose distance from the root is $h$). 
On the event that $p(v)$ has label $i = (o, i_1, i_2, \ldots, i_h)$ and $|N_{p(v)}(\tau)| = \ell +1$, assign $v$ the label $(i,m)$ where $m$ is chosen uniformly at random from $\{1,2, \ldots, \ell\}$, independently of all other random assignments made thus far.
Carry out the same procedure (independently) for each vertex at a distance $h+1$ from the root. Proceed by induction to label the whole tree.
Note that, under this labeling, an element $(\tau, (x, y)) \in \Tinf$ is viewed as a random rooted marked tree whose vertex set is a subset of $\bbV$.

\subsection{Probability measures on rooted marked  graphs}
\label{sec:prelim-probability-graphs}
Let $\P(\Ginf)$ denote the space of probability measures on $\Ginf$ equipped with the weak topology. Also, let $\P(\Tinf ) \subset \P(\Ginf)$ denote the set of probability measures on  $\Tinf$ equipped with the subspace topology. A typical element of $\P(\Ginf)$ will generally be denoted by $\rho$. Expectation with respect to $\rho$ is denoted by $\Exp_\rho$. For $h \in \NZ$, let $\rho_h$ denote the marginal of $\rho$ on the depth $h$ truncation of the (random) rooted marked graph. Note that $\rho_h \in \P(\G{h})$. In particular, $\r{o} \in \P(\X)$ denotes the law of the mark on the root vertex. If $\rho \in \P(\Tinf)$, then $\rho_h \in \P(\T{h})$. For $h = 1$, a  $\P(\T{1})$ element will generally be denoted by $\mu$. Let $\rho_{o, \text{deg}} \in \P(\NZ)$ denote the law (under $\rho$) of the degree of the root, that is, the law of $\dego$ when $\Law(\TB) = \rho$. Finally, given $\b > 0$,  let $\PB \subset \P(\Tinf)$ denote the set of probability measures on $\Tinf$ such that $\EDRinf = \b$.

\begin{remark}
\label{remark:label-unlabel}
Although $\P(\Tinf)$ is the space of probability measures on {\em unlabeled} rooted marked trees, it is sometimes convenient to view it as a collection of probability measures on {\em labeled} rooted marked trees under the Ulam-Harris-Neveu labeling scheme. This viewpoint is especially useful to describe certain functions defined on $\P(\Tinf)$. Throughout this paper, for a $\Tinf$ random element $\TB$, wherever vertex labels are used for $\TB$, it is understood that $\TB$ is viewed as a rooted labeled tree labeled according to the Ulam-Harris-Neveu scheme.
\end{remark}

\section{The LDP for the neighborhood empirical measure}
\label{sec:results-nbd}
In this section, we introduce the model and state our main results on the LDP for the neighborhood empirical measure.

\subsection{Marked random graph model}
\label{sec:model-iid}
We consider random graphs whose vertices and edges are marked with independent random variables. We consider three families of sparse graphs, each parametrized by a fixed constant $\k > 0$:
\begin{enumerate}
\item[1. $(\CM)$] {\em Graphs with a fixed degree distribution:} Let $M \in \N$, and let $\alpha \in \P(\NZ)$ have mean $\k$ and support contained in $\{0,1,\ldots,M\}$. Let $\{\alpha_n\}$  be such that
\begin{enumerate}
\item For each $n \in \N$, the support of $\alpha_n$ lies in $\{0,1,\ldots, M\}$;
\item For each $0 \leq i \leq M$, $\alpha_n(\{i\})$ is of the form $\frac{k}{n}$ for some $k \in \{0,1, \ldots, n\}$, and $n\sum_{i =0}^M i \alpha_{n}(\{i\})$ is even;
\item $\alpha_n \to  \alpha$ in $\P(\NZ)$ as  $n \to \infty$.
\end{enumerate}
Let $\GCM$ denote the set of graphs on $n$ vertices\footnote{The restriction that $n\sum_{i =0}^M i \alpha_{n}(\{i\})$ is even is required since the sum of vertex degrees must be even. Under that assumption,  by the Erd\H{o}s-Gallai theorem, $\GNA$ is nonempty for large $n$.} whose degree distribution equals $\alpha_n$.
\item[2. ($\FE$)] {\em Graphs with a fixed number of edges:} Let $\{m_n\}$ be a sequence such that $m_n/n \to \k/2$ as $n \to \infty$.  Let $\GNM$ denote the set of graphs on $n$ vertices with $m_n$ edges.
\item[3. $(\ER)$] {\em Sparse \Erdos random graphs:} Let $\GER$ denote the \Erdos random graph on $n$ vertices with connection probability $\k/n$.
\end{enumerate}
We now introduce three corresponding sequences $\{G_n\}$ of independent random graphs:
\begin{enumerate}
    \item $\CM(\alpha_n, \k)$ graph sequence: where $G_n$ is uniformly sampled from $\GNA$ for each $n \in \N$; 
    \item $\FE(\k)$ graph sequence: where $G_n$ is uniformly sampled from $\Gnm$ for each $n \in \N$;
    \item $\ER(\k)$ graph sequence: where $G_n$ is the \Erdos random graph with connection probability $\k/n$ for each $n \in \N$. 
\end{enumerate}
Fixing one of the three graph sequences above, we define a corresponding marked graph sequence by assigning i.i.d.\,marks to the vertices and edges of each graph $G_n$ in the sequence. More precisely, let $\X$ and $\Y$ be nonempty finite sets, and let $\nu$ and $\xi$ be probability measures on $\X$ and $\Y \times \Y$,  respectively. We mark each vertex of $G_n$ with i.i.d.\,random variables $\{X^n_v\}_{v \in G_n}$ with law $\nu$, independent of $G_n$. We also mark  each edge of $G_n$ with i.i.d.\,random variables $\{(Y_{(u,v)}, Y_{(v,u)})\}_{u, v \in G_n, u \in N_v(G_n)}$ with law $\bar{\xi} \in \P(\Y \times \Y)$, independent of $G_n$ and $\{X^n_v\}_{v \in G_n}$,  defined by 
\begin{align}
\bar{\xi}(y, y^\prime) := \frac{1}{2}[\xi(y, y^\prime) + \xi(y^\prime, y)], \quad y, y^\prime \in \Y,
\label{eqn:def-xibar}
\end{align}
and associate the mark $Y_{(v,u)}$ (resp. $Y_{(u,v)}$) with the vertex $v$ (resp. $u$). In  other words, for every (undirected) edge $\{u,v\}$ of $G_n$, we sample an element $(Y, Y')$ from the law $\xi$ and assign $Y$ to $u$ and $Y'$ to $v$ with probability $1/2$ and assign $Y'$ to $u$ and $Y$ to $v$ with probability $1/2$,  independent of everything else. This gives us the corresponding random marked graph sequence $\{(G_n, (X^n, Y^n))\}$. 
\subsection{The neighborhood empirical measure}
Given any of the marked random graph sequences $\{(G_n, (X^n, Y^n))\}$ described in Section \ref{sec:model-iid}, we consider the
corresponding neighborhood empirical measure, defined as:
\begin{align}
L_n := \frac{1}{n} \sum_{v \in G_n} \delta_{(\cl_v(G_n, (X^n, Y^n)))}, \quad  n \in \N,
\label{eqn:neighborhood-measure}
\end{align}
where $\cl_v(G_n, (X^n, Y^n))$ is the first neighborhood of $(G_n, (X^n, Y^n))$ rooted at $v$, viewed as an element of $\T{1}$.
Then $L_n$ is a $\P(\T{1})$ random element. 
We denote the the neighborhood empirical measure of the $\CM(\alpha_n, \k)$, $\FE(\k)$ and $\ER(\k)$ marked random graph sequences by $L^{\CM}_n$, $L^{\FE}_n$ and $L^{\ER}_n$, respectively.

\begin{remark}
    \label{remark:local-convergence-nbd} 
    Let $\eta_1$ be the law of a $\T{1}$ random element having i.i.d.\,vertex and edge marks with law $\nu$ and  $\xi$, respectively, both independent of each other and independent of the root degree. When the root degree distribution is   $\text{Poi}(\k)$ (resp. $\alpha$), we denote it by $\etab{\k}_1$ (resp. $\etaa_1$). It is well known that as $n \to \infty$, both $L^\FE_n$ and $L^\ER_n$ (resp. $L^\CM_n$) converge weakly to $\etab{\k}_1$ (resp. $\etaa_1$); see, e.g.,   \cite[Theorem 5.8]{DemMon10}, \cite[Theorem 5.8]{OliReiSto20}, \cite[Theorem 3.2]{LacRamWu23}.
\end{remark}

\subsection{Statement of the neighborhood empirical measure LDP}
To state the LDP for the sequences $\{L^\CM_n\}$, $\{L^\FE_n\}$ and $\{L^\ER_n\}$, we need to introduce some notation in order to define the rate functions. Recall $\eta_1$ from Remark \ref{remark:local-convergence-nbd}, and let $\mu \in \P(\T{1})$ be such that $\mu \ll \eta_1$ and $0 < \Exp_{\mu}\sqpr{\dego} < \infty$. Next, we define the size-biased distribution $\bar{\mu}$ of $\mu$ by
\begin{align}
\frac{d\bar{\mu}}{d\mu}(\tau) := \frac{\text{deg}_\tau(o)}{\Exp_{\mu}\sqpr{\dego}}, \quad \tau \in \T{1}.
\label{eqn:SBRinf-mu}
\end{align}
Note that $\bar{\mu}$ is indeed a probability measure since $\Exp_{\mu} \sqpr{\frac{d\bar{\mu}}{d\mu}(\TB) } = 1$. Further, note that there is at least one child under $\bar{\mu}$, that is, $\bar{\mu}(\dego \geq 1) = 1$.

We now define several important marginal laws of $\mu$. Let $\TB = (\TB, (\bfX, \bfY))$ be a random $\T{1}$ element distributed according to $\bar{\mu}$. Viewing $\TB$ as a random labeled tree using the Ulam-Harris-Neveu scheme described in Section \ref{sec:labeling} and noting that $\bar{\mu}(\dego \geq 1)$, let $\bfX_v$ (resp. $\bfX_o$) denote the vertex mark of the vertex with label $v$ (resp. the root) and, similarly, let $\bfY_{(v,o)}$ (resp. $\bfY_{(o,v)}$) denote the edge mark on the edge $\{v, o\}$ associated with the vertex with label $v$ (resp. the root $o$). Let $\SBMuRM$ denote the law of $(\bfX_1,\bfY_{(1,o)})$ and let  $\SBMuCRM$ denote the conditional law\footnote{We fix a version of this conditional law for each $\mu \in \P(\T{1})$, the choice of which is inconsequential in the definition of the rate function.} of $(\bfX_1,\bfY_{(1,o)})$ given $(\bfX_o, \bfY_{(o,1)})$.
Additionally, let $\SBMuM \in \P((\X \times \Y)^2)$ denote the joint law of $((\bfX_1, \bfY_{(1,o)}), (\bfX_o, \bfY_{(o,1)}))$, that is,
\begin{align}
    \SBMuM = \Law((\bfX_1, \bfY_{(1,o)}), (\bfX_o, \bfY_{(o,1)})). \label{eqn:sbmubar}
\end{align} We define the size-biased quantities $\SBetabarM$, $\SBetabarMOne$ and $\SBetabarMOneGivenZero$ in an exactly analogous fashion. Using $\mu \ll \eta_1$, it can be shown that $\SBMuM \ll \SBetabarM$ (see Lemma \ref{lemma:pirllpieta}).

Next, for $\mu \in \P(\T{1})$ with $0 < \Exp_{\mu}\sqpr{\dego} < \infty$, define the probability measures $\MP 
= \MPdef$ and  $\MCP = \MCPdef$ by 
\begin{align}
\frac{d\MP}{d\eta_1}(\tau) & : = \prod_{v \in N_o(\tau)} \frac{d\SBMuRM}{d\SBetabarRM}(X_v, Y_{(v,o)}), \quad \tau = (\tau, (X, Y)) \in \T{1},
\label{eqn:RP}
\end{align}
and
\begin{align}
\label{eqn:RCP}
\frac{d\MCP}{d\eta_1}(\tau) & : = \prod_{v \in N_o(\tau)} \frac{d\SBMuCRM}{d\SBetabarCRM}(X_v, Y_{(v,o)} \mid X_o, Y_{(o,v)}), \quad \tau = (\tau, (X, Y)) \in \T{1}.
\end{align}
Note that both $\MP$ and $\MCP$ depend on $\eta_1$, but we suppress this dependence in the above definition for the sake of readability. In the particular case when both $\mu$ and $\eta_1$ are supported on $\k$-regular trees without edge marks, we have $\SBMuM = \mu^{1,o}$ and $\SBetabarM = \eta^{1,o} = \nu \otimes \nu$; in this case,  $\MP$ and $\MCP$ take the explicit form
\begin{align*}
    \MP(x_o, x_1, \ldots, x_\k) & = \nu(x_o) \bigotimes_{v=1}^\k \mu^1(x_v); \\
    \MCP(x_o, x_1, \ldots, x_\k) & = \nu(x_o) \bigotimes_{v=1}^\k \mu^{1|o}(x_v \mid x_o).
\end{align*}
Although not immediately apparent, in the general case as well, it can be shown that both $\MP$ and $\MCP$ are indeed probability measures (see Lemma \ref{lemma:prob-measures-h=1}).

The rate functions will be expressed in terms of the family of functions  $\frakI_{\b, \eta_1}  : \P(\T{1}) \to [0, \infty]$, $\b \geq 0$, defined as follows:
\begin{align}
\frakI_{\b, \eta_1}(\mu) : = \begin{dcases}
H(\mu_{o} \| \nu) & \text{if } \beta = 0 \text{ and } \Exp_{\mu}[\dego] = 0,\\
\frac{1}{2}  \sqpr{ H(\mu \| \MP) + H(\mu \| \MCP)} & \text{if } \beta > 0,  \,  \Exp_{\mu}[\dego] = \beta, \, \SBMuM \text{ is symmetric} \,  \text{ and } \mu \ll \eta_1,\\
\infty & \text{ otherwise}.
\end{dcases}
\label{eqn:frakI2-generic}
\end{align}
\noindent Also, define $\ell_\k : \R_+ \to \R_+$ by
\begin{align}
    \ell_\k(\beta) := \frac{\k}{2}\left(\frac{\beta}{\k} \log\left(\frac{\beta}{\k}\right)  - \frac{\beta}{\k} + 1 \right), \quad \b \geq  0,
    \label{eqn:def-ell}
\end{align}
with the convention $0 \log 0 = 0$. We define the rate functions $\frakI^\CM, \frakI^\FE, \frakI^\ER : \P(\T{1}) \to  [0,\infty]$ as follows:
\begin{align}
\frakI^\CM(\mu) & :=
\begin{dcases}
\frakI_{\k, \etaa_1}(\mu) & \text{ if } \mu_{o, \text{deg}} = \alpha,\\
\infty & \text{ otherwise};
\end{dcases}
\label{eqn:J-CM-iid}\\
\frakI^\FE (\mu) & := \frakI_{\k, \etab{\k}_1}(\mu);
\label{eqn:J-FE-iid} \\
\frakI^\ER(\mu) & := 
\begin{dcases}
\ell_\k(\b') +  \frakI_{\b', \etab{\b'}_1}(\mu) & \text{ if } \b' := \Exp_{\mu}[\dego] < \infty,\\
\infty & \text{ otherwise}.
\end{dcases}
\label{eqn:J-ER-iid}
\end{align}

We can now state our result for the neighborhood empirical measure.
\begin{theorem}[LDP for the neighborhood empirical measure]
\label{thm:iid-nbd}
The sequence $\{L_n^{\CM}\}$ (respectively, $\{L_n^{\FE}\}$ and $\{L_n^{\ER}\}$) satisfies the LDP on $\P(\T{1})$  with rate function $\frakI^{\CM}$ (respectively, $\frakI^{\FE}$ and $\frakI^{\ER}$).
\end{theorem}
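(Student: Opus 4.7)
The plan is to reduce Theorem \ref{thm:iid-nbd} to an analytic identity: that the intermediate rate function $\widetilde{\frakI}_{\b,\eta_1}$ (a difference of relative entropies) established in the forthcoming Theorem \ref{thm:iid-old} coincides pointwise with the compact rate function $\frakI_{\b,\eta_1}$ of \eqref{eqn:frakI2-generic}, namely $\tfrac{1}{2}[H(\mu\|\MP)+H(\mu\|\MCP)]$ on the admissible set. Since Theorem \ref{thm:iid-old} already delivers LDPs for $\{L_n^\CM\}$, $\{L_n^\FE\}$, $\{L_n^\ER\}$ with the intermediate rate function, uniqueness of the rate function in a Polish-space LDP then immediately yields Theorem \ref{thm:iid-nbd}.

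To carry out the identification, I would proceed as follows. First, since $\MP$ and $\MCP$ are defined only via their Radon--Nikodym densities with respect to $\eta_1$ in \eqref{eqn:RP}--\eqref{eqn:RCP}, I would verify that they are bona fide probability measures (Lemma \ref{lemma:prob-measures-h=1}); this reduces to the conditional independence of the children of the root under $\eta_1$ given the root mark, together with the fact that $d\SBMuRM/d\SBetabarRM$ and its conditional analog $d\SBMuCRM/d\SBetabarCRM$ integrate to one against their base measures. Next, I would expand the two relative entropies by writing $\log(d\MP/d\eta_1)$ and $\log(d\MCP/d\eta_1)$ as sums over children of the root using \eqref{eqn:RP}--\eqref{eqn:RCP} and substitute into $H(\mu\|\MP)$ and $H(\mu\|\MCP)$. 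The resulting child-sums can then be converted into size-biased expectations of a single log-density by applying the unimodular mass transport principle to $\mu$. The symmetry hypothesis on $\SBMuM$ is invoked at the final step to symmetrize the half-sum $\tfrac{1}{2}[H(\mu\|\MP)+H(\mu\|\MCP)]$ and match terms precisely against the size-biased marginals $\SBMuRM$ and $\SBMuCRM$ that appear in $\widetilde{\frakI}_{\b,\eta_1}$.

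The main obstacle I anticipate is the step highlighted in Remark \ref{rmk:unimod-full-extension}(1): identifying the unimodular extension of $\SBMuM$ with the natural conditional law that arises by splitting the root edge, which is what legitimizes the conditional factorization underlying $\MCP$. The Ulam--Harris--Neveu labeling scheme is indispensable here, as it converts the unordered children of the root into indexed exchangeable random variables and thereby permits the coordinate-by-coordinate manipulations of the product structure in \eqref{eqn:RP}--\eqref{eqn:RCP}. Once the generic identity $\widetilde{\frakI}_{\b,\eta_1}=\frakI_{\b,\eta_1}$ is established, the three random-graph ensembles require only cosmetic adjustments: the $\CM$ case restricts attention to $\mu$ with root-degree marginal $\alpha$ and uses $\eta_1=\etaa_1$; the $\FE$ case uses $\eta_1=\etab{\k}_1$; and the $\ER$ case acquires the additive Sanov-type term $\ell_\k(\b')$ accounting for deviations of the mean root degree from $\k$, which can be isolated by first projecting onto the degree empirical measure via the contraction principle.
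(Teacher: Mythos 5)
There is a genuine gap, and it lies at the very first step of your reduction. Theorem \ref{thm:iid-old} is an LDP for the \emph{component} empirical measures $\{U_n^\CM\}$, $\{U_n^\FE\}$, $\{U_n^\ER\}$ on $\P(\Ginf)$, not for the neighborhood empirical measures $\{L_n\}$ on $\P(\T{1})$; no intermediate-form LDP for $\{L_n\}$ is available, so there is nothing to which a ``uniqueness of the rate function'' argument can be applied. Consequently the work you then describe --- checking that $\MP$ and $\MCP$ are probability measures and matching $\frac{1}{2}[H(\mu\|\MP)+H(\mu\|\MCP)]$ with the difference-of-entropies expression --- is essentially the depth-one identification already carried out in Lemma \ref{lemma:prob-measures-h=1} and Proposition \ref{prop:alternate-form-1}, i.e.\ part of the proof of Theorem \ref{thm:alternate-form-theorem}; it does not by itself produce an LDP for $\{L_n\}$.

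What is missing is the passage from the component-level statement to the neighborhood-level one, which is where the paper's actual proof does its work: one notes $L_n=\varpi(U_n)$ for the depth-one projection $\varpi:\P(\Ginf)\to\P(\T{1})$, proves $\varpi$ is continuous (Lemma \ref{lemma:varpi-continuity}), applies the contraction principle to Theorem \ref{thm:iid}, and then must evaluate the contracted rate function $\mu\mapsto\inf\{\calI^{\CM}(\rho):\varpi(\rho)=\mu\}$ (and its $\FE$, $\ER$ analogues). Since $\calI_{\b,\eta}$ is a countable sum over depths $h\geq 1$, this requires exhibiting a minimizing extension of $\mu$ for which every summand with $h\geq 2$ vanishes; the paper does this by taking $\rho=\mathsf{UGWT}(\mu)$, the full unimodular extension of Definition \ref{def:unimod-full-extension}, for which $\r{h}=\Rstar{h}$ and hence $\RP_h=\RCP_h=\r{h}$ for all $h\geq 2$, leaving exactly $\frakI_{\b,\eta_1}(\mu)$ (together with the admissibility/degree constraints, and the separate $\ell_\k$ bookkeeping in the $\ER$ case). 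Your proposal never addresses this infimization over infinite-depth extensions, and without it the neighborhood LDP does not follow from either Theorem \ref{thm:iid-old} or Theorem \ref{thm:iid}.
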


Observe that the rate functions in \eqref{eqn:J-FE-iid}-\eqref{eqn:J-ER-iid} have a tractable form, expressed as the sum of two relative entropy terms, with the first term measuring the entropic cost of deviation of the distribution of the leaf marks from independence and the second entropic cost measuring conditional independence (given the root) of the leaf marks in $\mu$. It is interesting to note that although the $\CM$ and $\FE$ random graph sequences themselves are very different, we have shown that their rate functions take a similar form. The same is true for the $\ER$ sequence, except that there is an additional cost $\ell_{\kappa}(\cdot)$ to capture possible deviations of the mean root degree from that of the true law (which are forbidden at the large deviation scale for the $\CM$ and $\FE$ random graph sequences). In the absence of edge marks, this common form can in fact be simplified further.

\begin{corollary}
    \label{cor-kapparf}
    In the particular case when there are no edge marks, we have the following alternative representation for $\frakI_{\b, \eta_1}$:
\begin{align}
\frakI_{\b, \eta_1}(\mu) = \begin{dcases}
H(\mu_{o} \| \nu) & \text{if } \beta = 0 \text{ and } \Exp_{\mu}[\dego] = 0,\\
  H(\mu \| \MCP) + \frac{\b}{2}  H(\SBMuM \| \SBMuRM \otimes \SBMuMZero) & \text{if } \beta > 0,  \,  \Exp_{\mu}[\dego] = \beta, \, \SBMuM \text{ is symmetric} \,  \text{ and } \mu \ll \eta_1,\\
\infty & \text{ otherwise}.
\end{dcases}
\end{align}
\end{corollary}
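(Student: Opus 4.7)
The plan is to establish the claim in the only non-trivial regime, namely $\beta > 0$, $\Exp_{\mu}[\dego] = \beta$, $\SBMuM$ symmetric, and $\mu \ll \eta_1$ --- in the remaining branches of \eqref{eqn:frakI2-generic} both representations agree verbatim. In this regime, subtracting $H(\mu \| \MCP)$ from both sides of the claimed identity reduces the corollary to the single statement
\begin{equation*}
H(\mu \| \MP) - H(\mu \| \MCP) \;=\; \beta \, H(\SBMuM \| \SBMuRM \otimes \SBMuMZero).
\end{equation*}

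To establish this, I would first compute the left-hand side via the explicit densities \eqref{eqn:RP}--\eqref{eqn:RCP}. Since $\mu \ll \eta_1$, writing $d\mu/d\MP = (d\mu/d\eta_1)/(d\MP/d\eta_1)$ and similarly for $\MCP$, the difference of log-densities telescopes into a sum over the neighbors of the root:
\begin{equation*}
H(\mu \| \MP) - H(\mu \| \MCP) = \Exp_\mu\!\left[\sum_{v \in N_o(\TB)} \log \frac{d\SBMuCRM/d\SBetabarCRM(\bfX_v \mid \bfX_o)}{d\SBMuRM/d\SBetabarRM(\bfX_v)}\right].
\end{equation*}
In the absence of edge marks, $\eta_1$ has i.i.d.\,vertex marks with law $\nu$, so $\SBetabarM = \nu \otimes \nu$ and consequently $\SBetabarRM = \SBetabarCRM(\,\cdot \mid x_o) = \nu$; the integrand collapses to $\log(\SBMuM(\bfX_v,\bfX_o)/(\SBMuRM(\bfX_v)\SBMuMZero(\bfX_o)))$.

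The final ingredient is the size-biasing identity, immediate from the definition \eqref{eqn:SBRinf-mu} of $\bar\mu$ combined with the Ulam-Harris-Neveu labeling scheme that defines $\SBMuM$ (under $\bar\mu$, conditionally on $\TB$ with root degree $k \geq 1$, vertex $1$ is uniformly chosen among $N_o(\TB)$): for every bounded $f:\X \times \X \to \R$,
\begin{equation*}
\Exp_\mu\!\left[\sum_{v \in N_o(\TB)} f(\bfX_v, \bfX_o)\right] \;=\; \beta \, \Exp_{\SBMuM}[f(\bfX_1,\bfX_o)].
\end{equation*}
Applied to $f(x_1,x_o) = \log(\SBMuM(x_1,x_o)/(\SBMuRM(x_1)\SBMuMZero(x_o)))$, this yields exactly $\beta \, H(\SBMuM \| \SBMuRM \otimes \SBMuMZero)$ and closes the chain.

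The main technical nuisance I anticipate is integrability justification when one of $H(\mu\|\MP)$ or $H(\mu\|\MCP)$ is infinite, since the displayed identity is derived as a difference. This is handled by a short dichotomy: splitting $f$ into its positive and negative parts and applying the size-biasing identity to each separately (using $\mu \ll \eta_1$ to control the negative part) shows that the identity persists in $[0,\infty]$, so the two representations of $\frakI_{\b,\eta_1}(\mu)$ simultaneously agree or simultaneously diverge.
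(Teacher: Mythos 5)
Your proposal is correct and follows essentially the same route as the paper's proof in Section \ref{sec:proof-vertex-only}: reduce to the identity $H(\mu \| \MP) = H(\mu \| \MCP) + \b\, H(\SBMuM \| \SBMuRM \otimes \SBMuMZero)$, use the no-edge-mark cancellation (the $\eta_1$-factors reduce to $\nu$, so $d\MCP/d\MP$ is the product over $v \in N_o$ of $\tfrac{d\SBMuM}{d(\SBMuRM\otimes\SBMuMZero)}(\bfX_v,\bfX_o)$), and then apply exchangeability together with the size-biasing relation \eqref{eqn:SBRinf-mu} to identify the sum with $\b\, H(\SBMuM \| \SBMuRM \otimes \SBMuMZero)$. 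Your integrability dichotomy is harmless but essentially unnecessary here, since $\X$ is finite, $\SBMuM$ is dominated by the product of its marginals, and $\Exp_\mu[\dego]=\b<\infty$, so the correction term is automatically finite, exactly as in the paper.
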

The proof of Corollary \ref{cor-kapparf} is deferred to Section \ref{sec:proof-vertex-only}. In the next section, we provide one example to illustrate how the results of this section facilitate the analysis of conditional limit laws. 

\subsection{A Gibbs conditioning principle}
\label{sec:gibbs}
For simplicity, we shall only consider vertex marks. 
Let $\TX$ denote the space of rooted marked trees of depth $1$ with vertex marks from $\X$. 
Given a function $h : \X \to \R$, consider the local $h$-sum functional $f : \TX \to \R$ defined by
\begin{align}
f(\tau, X) = \sum_{v \in N_o(\tau)} h(X_v), \quad (\tau, X) \in \TX.
\label{eqn:gibbs-sum-fn}
\end{align}
Consider the neighborhood empirical measure $L_n=L_n^\CM$ of the  $\CM(\alpha_n, \k)$ random graph sequence with limiting degree distribution $\alpha \in \P(\{0,1,\ldots, M\})$ and limiting true law $\etaa_1 \in \P(\TX)$.
Since $\alpha$ has finite support and $\X$ is a finite set, $f$ is uniformly bounded. Therefore, by the weak convergence result in Remark \ref{remark:local-convergence-nbd}, it follows that $\langle L_n, f \rangle : = \Exp_{L_n}[f]$ converges weakly to $\Exp_{\etaa_1}[f]$ as $n \to \infty$. Quantities of the form $\Exp_{L_n}[f]$  are relevant for many interacting particle system models. For example, in the SIR model for epidemics (see \cite{CocRam23}), each vertex takes the state S (susceptible), I (infected) or R (recovered), and at any given time, the probability that a susceptible vertex becomes infected depends on the
number of neighboring nodes that are infected.  In this case, if $f$ is defined as in \eqref{eqn:gibbs-sum-fn} with $h(X_v) = \indf{\{X_v = I\}}$, then the quantity  $\langle L_n, f \rangle = \Exp_{L_n} [f]$  captures the growth rate of infected individuals at any time.  The most likely way in which there is an unusually large infection growth rate (starting from
an i.i.d.\,initial condition) is governed by the following question for any fixed $c >  \Exp_{\etaa_1}[f]$:
\begin{question}
Conditioned on $\langle L_n, f \rangle \geq c$, how does $L_n$ behave for large $n$?    
\label{question:gibbs}
\end{question}
The form of the rate function in \eqref{eqn:J-CM-iid} allows us to provide a precise answer to this question.

\begin{proposition}[Gibbs conditioning principle]
  \label{prop-gcp}
Suppose $\Exp_{\etaa_1}[f] < c < \k \cdot \max_{x \in \X} h(x)$. Then for any open set $A \ni \mu^{*,c}$, we have
\begin{align*}
\lim_{\delta \downarrow 0} \limsup_{n \to \infty} \frac{1}{n} \log \Prob\bigg(L_n \notin A \, \, \bigg| \, \,  \langle L_n, f \rangle  \geq  c - \delta  \bigg)  < 0,
\end{align*}
where
\begin{align}
\mu^{*,c}(\tau, X) := \gamma(\text{deg}_\tau(o), X_o) \prod_{v \in N_o(\tau)} \psi_\gamma(X_v), \quad (\tau, X) \in \TX, \label{eqn:mustarc}
\end{align}
with 
\begin{align*}
\psi_\gamma(x) := \frac{1}{\k} \sum_{n =0}^M n \gamma(n, x), \quad x \in \X,
\end{align*}
where $\gamma \in \P(\{0,1,\ldots, M\} \times \X)$ and $\lambda > 0$ is the unique solution to
\begin{align*}
\gamma(n, x) = \alpha(n) \frac{\nu(x)\exp\{\lambda n h(x)\}}{ \sum_{y \in \X} \nu(y)\exp\{\lambda n h(y)\}}, \quad n \in \{0,1,\ldots, M\}, \, x \in \X, 
\end{align*}
and $\sum_{m,y} m h(y) \gamma(m, y) = c$. 
In particular, this implies
\begin{align*}
\lim_{\delta \downarrow 0} \lim_{n \to \infty} \Prob\bigg(L_n \in \cdot  \, \, \bigg| \, \, \langle L_n, f \rangle \geq c-\delta \bigg)  = \delta_{\mu^{*,c}}(\cdot).
\end{align*}
\label{prop:gibbs}
\end{proposition}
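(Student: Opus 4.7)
The plan is to combine Theorem~\ref{thm:iid-nbd} with a standard Gibbs conditioning argument, thereby reducing the proposition to the identification of $\mu^{*,c}$ as the unique minimizer of $\frakI^{\CM}$ over $F := \{\mu \in \P(\T{1}) : \langle \mu, f\rangle \geq c\}$. Since $\alpha$ has support in $\{0,\ldots,M\}$ and $h$ is bounded on the finite set $\X$, the map $\mu \mapsto \langle \mu, f\rangle$ is bounded and continuous on $\P(\T{1})$, so $F_\delta := \{\langle\mu,f\rangle \geq c-\delta\}$ is closed and its interior contains $\mu^{*,c}$ (because $\langle \mu^{*,c}, f\rangle = c > c-\delta$). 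Applying the LDP upper bound to $A^c \cap F_\delta$, the LDP lower bound to the interior of $F_\delta$, and using goodness of $\frakI^{\CM}$ together with the strict separation $\inf_{A^c \cap F} \frakI^{\CM} > \frakI^{\CM}(\mu^{*,c})$ (which follows from uniqueness of the minimizer on $F$) yields the conclusion upon taking $\delta \downarrow 0$.

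To identify the unique minimizer, the key tool is the alternative representation in Corollary~\ref{cor-kapparf}: for feasible $\mu$,
\[
\frakI^{\CM}(\mu) \;=\; H(\mu \| \MCP) \;+\; \tfrac{\k}{2}\, H(\SBMuM \,\|\, \SBMuRM \otimes \SBMuMZero),
\]
and the second term is nonnegative. Unpacking \eqref{eqn:RCP} with $\eta_1 = \etaa_1$, one verifies that the $(\dego, X_o)$-marginal of $\MCP$ equals $\alpha \otimes \nu$; the data processing inequality applied to the projection $(\tau, X) \mapsto (\text{deg}_\tau(o), X_o)$ then gives $H(\mu\|\MCP) \geq H(\gamma_\mu \| \alpha \otimes \nu)$, where $\gamma_\mu$ denotes the $(\dego, X_o)$-marginal of $\mu$. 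A short size-biasing computation using the Ulam-Harris-Neveu exchangeability of the children yields the identity $\langle \mu, f\rangle = \k \langle \SBMuRM, h\rangle$; moreover, the required symmetry of $\SBMuM$ (a manifestation of unimodularity via the mass-transport principle) forces $\SBMuRM = \SBMuMZero = \psi_{\gamma_\mu}$, so the constraint $\langle \mu, f\rangle = c$ reduces to the linear constraint $\sum_{n, x} n h(x) \gamma_\mu(n, x) = c$ on $\gamma_\mu$ alone. Combining these observations, for every feasible $\mu$,
\[
\frakI^{\CM}(\mu) \;\geq\; H(\gamma_\mu \| \alpha \otimes \nu),
\]
and minimizing the right-hand side over $\gamma \in \P(\{0,\ldots,M\} \times \X)$ subject to $\sum_x \gamma(n, x) = \alpha(n)$ for each $n$ and $\sum_{n,x} n h(x) \gamma(n, x) = c$ is a finite-dimensional, strictly convex problem whose Lagrangian stationarity conditions produce the unique exponential tilt $\gamma^*(n, x) = \alpha(n)\nu(x) e^{\lambda n h(x)} / Z(n, \lambda)$ of the proposition; existence and finiteness of $\lambda$ follow from $\Exp_{\etaa_1}[f] < c < \k \max_{x \in \X} h(x)$.

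The main technical step is the uniqueness analysis: equality in the data-processing bound forces the conditional law under $\mu$ of the leaf marks given $(\dego, X_o)$ to be i.i.d.\ $\SBMuCRM(\cdot \mid X_o)$, while vanishing of the covariance term $H(\SBMuM \| \SBMuRM \otimes \SBMuMZero) = 0$ forces $\SBMuCRM(\cdot \mid x_o)$ to be independent of $x_o$ and equal to $\psi_{\gamma_\mu}$. Together with the uniqueness of $\gamma^*$ from strict convexity, this pins $\mu$ down to the product form $\mu^{*,c}$. The remaining routine ingredients---such as continuity of the value function $c' \mapsto \inf \{\frakI^{\CM}(\mu) : \langle \mu, f\rangle \geq c'\}$ at the prescribed $c$, which is needed for the $\delta \downarrow 0$ limit---follow from the explicit closed form of the tilted minimizer and from strict convexity of $H(\cdot\|\alpha\otimes\nu)$ on the finite-dimensional affine constraint set.
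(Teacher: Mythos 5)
Your proposal is correct and follows essentially the same route as the paper: reduce the conditional limit, via the LDP of Theorem \ref{thm:iid-nbd} and goodness of the rate function, to showing $\mu^{*,c}$ is the unique minimizer over $\{\langle\mu,f\rangle\geq c\}$; use admissibility/size-biasing to express the constraint through the (root degree, root mark) marginal; lower-bound the rate function by $H(\gamma_\mu\|\alpha\otimes\nu)$; and solve the resulting finite-dimensional problem by Lagrange multipliers with the same existence argument for $\lambda>0$. The only (inessential) differences are that you work with the form of Corollary \ref{cor-kapparf} plus data processing, where the paper uses the averaged form \eqref{eqn:frakI2-generic} together with an explicit competitor $\psi$ having the same $(\mathbf{d}_o,\bfX_o)$-marginal, and that you re-derive the conditioning step which the paper imports from \cite{Leo10}.
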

\noindent 
In other words, conditioned on the rare event $\langle L_n, f\rangle \geq c$, the distribution of the neighborhood of a uniformly chosen vertex in the marked graph has the following form: the root degree and the root mark is sampled from the tilted distribution $\gamma$ whose density (with respect to the true law $\alpha \otimes \nu$) is proportional to $\exp\{\lambda n h(x)\}$ for a uniquely specified $\lambda > 0$, and the leaf marks are i.i.d.\,with law given by a scaled average of $\gamma$. 

\begin{remark}
In the special case of $\k$-regular graphs, $\langle L_n, f \rangle = (\kappa+1) \sum_{v \in G_n} X_v^n$, and hence, the Gibbs conditioning principle of Proposition \ref{prop:gibbs} reduces to that of (the usual) Sanov's theorem \cite{Csi84} in which  $\mu^{*,c}$ corresponds to i.i.d.\,vertex marks on the $\k$-star.
\end{remark}

\section{The LDP for the component empirical measure}
\label{sec:results-com}
We now state our main result in full generality. Given one of the sequences of random graphs  $\{(G_n, (X^n, Y^n))\}$ introduced in Section \ref{sec:model-iid}, we define the corresponding  component empirical measure sequence:
\begin{align}
U_n := \frac{1}{n} \sum_{v \in G_n} \delta_{(\Conn_v(G_n, (X^n, Y^n)))}, \quad  n \in \N, \label{eqn:component-measure}
\end{align}
where we recall that $\Conn_v(G_n, (X^n, Y^n))$ denotes the connected  component of $(G_n, (X^n, Y^n))$ rooted at $v$, viewed as an element of $\Ginf$. Then  $U_n$ is a $\P(\Ginf)$ random element; the randomness in $U_n$ comes from the randomness of both the underlying graph and of the vertex and edge marks. Note that the neighborhood empirical measure $L_n$ defined in \eqref{eqn:neighborhood-measure} is the depth 1 marginal of $U_n$. We denote the component empirical measure of the $\CM(\alpha_n, \k)$, $\FE(\k)$ and $\ER(\k)$ random graph sequences by $U^\CM_n$, $U^\FE_n$ and $U^\ER_n$, respectively.
\subsection{Preliminaries}
We first introduce some concepts required to state the full LDP.

\subsubsection{Local weak convergence}
We first recall the definition of local weak convergence \cite{BenSch01} and describe the typical behavior of the component empirical measures $\{U^\CM_n\}$, $\{U^\FE_n\}$ and $\{U^\ER_n\}$.
\begin{definition}[Local weak convergence] A sequence of marked random graphs $\{(G_n, (X^n, Y^n))\}$, where $G_n$ has $n$ vertices, {\em converges locally in law} to $\rho \in \P(\Ginf)$ if $\{U_n\}$,  the corresponding sequence of component empirical measures, converges to $\rho$ in $\P(\Ginf)$ weakly as $n \to \infty$.
\end{definition}

\begin{remark}
\label{remark:local-convergence}
The following convergence results generalize Remark \ref{remark:local-convergence-nbd}.
    \begin{enumerate}
        \item Let $\eta^{\alpha} \in \P(\Tinf)$ denote the law of the size-biased  Galton-Watson tree with offspring distribution $\alpha$ and i.i.d.\,vertex marks with law $\nu$ and i.i.d.\,edge marks (independent of the vertex marks) with law $\xi$, both independent of the underlying tree. It is well known that $\{U_n^\CM\}$ converges locally in law to $\eta^{\alpha}$ as $n \to \infty$ (see, e.g.,  \cite[Theorem 5.8]{OliReiSto20}, \cite[Theorem 3.2]{LacRamWu23}).
         \item For $\b \geq 0$, let $\etab{\b}$ denote the law of the Galton-Watson tree with $\text{Poi}(\b)$ offspring distribution and i.i.d.\,vertex marks and i.i.d.\,edge marks (independent of the vertex marks), both independent of the underlying tree.  It is well known that both $\{U_n^\FE\}$ and $\{U_n^\ER\}$ converge locally in law to $\etab{\k}$ as $n \to \infty$ (see, e.g., \cite[Section 2.1]{DemMon10}, \cite[Theorem 3.2]{LacRamWu23}).
    \end{enumerate}
\end{remark}

\subsubsection{Unimodularity}
Unimodularity is an important symmetry property associated with $\P(\Ginf)$ elements. Roughly speaking, this property suggests that ``everything shows up at the root.'' The precise definition is via the following mass transport principle \cite{BenSch01,AldLyo07}. Let $\Gdstar[\X ; \Y]$ denote the space of unlabeled doubly rooted marked graphs (i.e., unlabeled marked graphs with two distinguished vertices, which could coincide), equipped with the corresponding topology of local convergence (see \cite[Definition 2.1]{AldLyo07}).
\begin{definition}[Unimodular measure]
\label{def:unimod}
A probability measure $\rho \in \P(\Ginf)$ is said to be unimodular if, for any measurable function $f : \Gdstar[\X; \Y] \to \R_+$, we have
\begin{align}
\label{eqn:unimod}
\Exp_{\rho}\left[\sum_{v \in \GB}f(\GB, o, v)\right] =  \Exp_{\rho}\left[\sum_{v \in \GB}f(\GB, v, o)\right].
\end{align}
The definition of unimodularity for 
probability measure $\rho \in \P(\Gstar)$ is exactly analog.
Let  $\PUB$ denote the set of unimodular probability measures on $\Tinf$ such that $\Exp_{\rho}\sqpr{\dego} = \b$.
\end{definition}
\noindent The above definition also applies to  $\P(\Gstar)$ elements, that is, probability measures on (unmarked) rooted graphs.
\begin{example}
The following are examples of unimodular measures.
    \begin{enumerate}
        \item Let $(G_n, (x^n, y^n))$ be a deterministic marked graph on $n$ vertices. Its component empirical measure $U_n \in \P(\Ginf)$ is a unimodular probability measure.
        \item  Let $Q \in \P(\NZ)$ be a probability measure with nonzero finite mean. The size-biased Galton-Watson tree is defined as follows: the root has offspring distribution $Q$; all subsequent vertices have independent number of offsprings with distribution $\bar{Q}$, where 
        \begin{align*}
            \bar{Q}(k) := \frac{(k+1) Q(k+1)}{\sum_{j \in \N} j Q(j)}, \quad k \in \NZ.
        \end{align*}
        The law of the size-biased Galton-Watson tree is unimodular.
        \item The laws $\etab{\b}$, $\b \geq 0$, and $\etaa$ in Remark \ref{remark:local-convergence} are all  unimodular.
    \end{enumerate}
    \label{example:unimod}
\end{example}

\subsubsection{Admissibility}
\begin{figure}
\centering
\begin{center}
\begin{tikzpicture}[scale=0.9]
\filldraw[black] (2,0) circle(3pt);
\filldraw[black] (2,2) circle(3pt);
\filldraw[black] (0,2) circle(3pt);
\filldraw[black] (1,-1.5) circle(3pt);
\filldraw[black] (3,-1.5) circle(3pt);
\filldraw[black] (3,3) circle(3pt);
\filldraw[black] (4,2.5) circle(3pt);
\filldraw[black] (0,0) circle(3pt);
\draw [black, thick] (2,0)--(2,2);
\draw [black, thick] (0,2)--(2,2);
\draw [black, thick] (0,0)--(0,2);
\draw [black, thick] (2,0)--(1, -1.5);
\draw [black, thick] (2,0)--(3, -1.5);
\draw [black, thick] (2,2)--(3,3);
\draw [black, thick] (3,3)--(4, 2.5);
\node at (2.3, 1.8) {$u$};
\node at (3, 3.3) {$v$};
\draw [black, dashed] (2.1, 3) -- (3.2, 1.8);
\draw [dotted,thick] (1.1,-0.35) ellipse (2.5cm and 2.7cm);
\node  at (-2.8, 0) {$\Conn_u(\tau \setminus  \{u, v\})$};
\node at (2,2.5) {$y_{(u,v)}$};
\end{tikzpicture}
\end{center}
\caption{ Illustration of the quantities $\Conn_u(\tau \setminus  \{u, v\})$ and $y_{(u,v)}$ from  Definition \ref{def:vcutu} when $\{u, v\}$ is an edge in $\tau$.}
\label{fig:vcutu}
\end{figure}
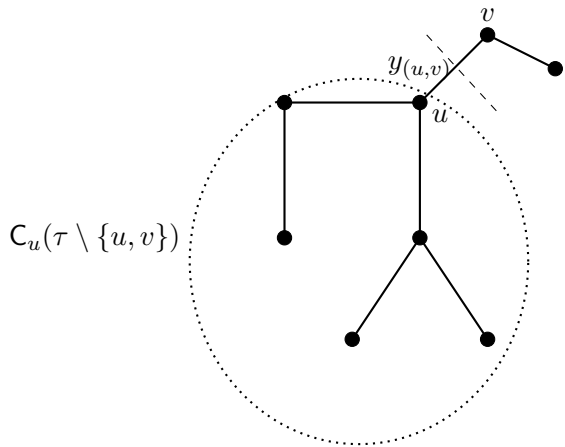
We now define certain elements that generalize the size-biased edge marginal $\SBMuM$ defined in \eqref{eqn:sbmubar}. Rather than edge marginals, the objects associated with the vertices 1 and $o$ are now subtrees (see Figure \ref{fig:vcutu}), as specified in Definition \ref{def:vcutu}.

\begin{definition}
If $\tau$ is a (labeled) marked tree and $\{u, v\}$ is an (undirected) edge in $\tau$, let $\tau(v \setminus u) \in   \Tinf \times \Y$ denote the pair $(\Conn_v(\tau  \setminus \{u, v\}), y_{(v, u)})$, where  $\Conn_v(\tau \setminus \{u, v\})$ is the subgraph of $\tau$ rooted at $v$,  after removing the edge $\{u,v\}$,  viewed as an element of $\Tinf$,  and $y_{(v, u)}$ is the mark on the edge $\{u, v\}$ associated with $v$. For $h \in \NZ$,  let $\tau(v \setminus u)_h$ denote the pair $(\Conn_v(\tau \setminus \{u, v\})_h, y_{(v, u)})$, where $(\Conn_v(\tau \setminus \{u, v\})_h \in \T{h}$ is the truncation of  $\Conn_v(\tau \setminus \{u, v\})$ up to depth $h$. Additionally, if either $u$ or $v$ is not a vertex in $\tau$ or $\{u,v\}$ is not an edge in $\tau$, then define $\tau(v \setminus u) := 0 $.
\label{def:vcutu}
\end{definition}
\begin{remark}
Note that for a $\Tinf$-valued random variable $\TB$, by Remark \ref{remark:label-unlabel} and Definition \ref{def:vcutu}, both $\TBOneO$ and $\TBOOne$ are random variables taking values in $\cpr{\Tinf \times \Y} \cup \{0\}$. Furthermore, on the event that $\dego \geq 1$, both $\TBOneO$ and $\TBOOne$ take values in $\Tinf \times \Y$.
\end{remark}

Next, for $\rho \in \PB$ with $\b > 0$,  similar to the definition in \eqref{eqn:SBRinf-mu}, we define the size-biased version $\bar{\rho}$ of $\rho$ by
\begin{align}
\frac{d\SBRinf}{d\rho}(\tau) :=\frac{\text{deg}_\tau(o)}{\b} =  \frac{\text{deg}_\tau(o)}{\Exp_{\rho}\sqpr{\dego}}, \quad \tau \in \Tinf.
\label{eqn:SBRinf}
\end{align}
Note that $\SBRinf$ is indeed a probability measure because
$
\Exp_{\rho}\sqpr{\frac{d\SBRinf}{d\rho}(\TB)} = 1.
$
Also, note that $\bar{\rho} \ll \rho$ and $\bar{\rho}(\dego \geq 1) = 1$. Likewise, for $h \in \N$ and $\r{h} \in \P^\b(\T{h})$, we define the size-biased version $\SBR{h}$ using \eqref{eqn:SBRinf}, but with $\rho$ replaced by $\r{h}$.

\begin{definition}[Size-biased marginals] 
Let $h \in \N, \r{h} \in \P(\T{h})$ and $\Law(\TB) = \r{h}$, with $0 < \Exp_{\r{h}}\sqpr{\dego} < \infty$.
\begin{enumerate}
    \item Let $\SBRM{h}$ denote the marginal law of $(\TBOOne_{h-1}, \TBOneO_{h-1})$. Since $\SBR{h}(\dego \geq 1)$, it follows that $\SBRM{h} \in \P((\T{h-1} \times \Y)^2)$;
    \item Let $\SBRRMh{h} \in \P(\T{h-1} \times \Y)$ denote the first marginal of $\SBRM{h}$, that is, the law of $\TBOOne_{h-1}$;
    \item For $\tau^\prime \in \T{h-1} \times \Y$, let $\SBRCRMh{h}(\cdot | \tau^\prime)$ denote the conditional law of $\TBOOne_{h-1}$ given $\TBOneO_{h-1} = \tau^\prime$.
\end{enumerate}
\label{def:admissible-h}
\end{definition}
The following notion of {\em admissibility}  generalizes the symmetry property of the size-biased edge marginals $\SBMuM$ in \eqref{eqn:sbmubar}.
\begin{definition}
The measure $\r{h}$ is said to be {\em admissible} if $\SBRM{h}$  is symmetric.
\label{def:admissibility}
\end{definition}

\subsubsection{Unimodular extension of admissible laws}
\label{sec:unimod-extension-one-step}
For any $h \in \N$, we now define a certain admissible probability measure on $\T{h+1}$ as an extension of a given admissible probability measure on $\T{h}$ (see \cite[Section 1.2]{BorCap15}, \cite[Section  2]{DelAna19}). These probability measures will serve as the ``reference laws'' associated with the restriction of the component empirical measures up to depth $h$, and certain auxiliary probability measures associated with these extensions will appear as the ``base measures'' for the relative entropies in the rate function,  playing the same role as $\MP$ and $\MCP$ in the definition \eqref{eqn:frakI2-generic} of $\frakI_{\b, \eta_1}(\mu)$.

Let $\b > 0$, and let $\r{h} \in \P^\b(\T{h})$ be admissible. Let $\Law(\TB) = \r{h}$. For $\tau,\tau^\prime \in \T{h-1} \times \Y$ such that $\SBRM{h}(\tau, \tau^\prime) > 0$, define
\begin{align}
    \PU{h}(\tau, \tau^\prime)(\TBdef) & := \r{h}\cpr{ \TBOneO_h   = \TBdef \mid    \TBOneO_{h-1}  = \tau, \TBOOne_{h-1}  = \tau' }, \quad \TBdef \in \T{h},
    \label{eqn:def-PU}
\end{align}
that is,
$\PU{h}(\tau, \tau^\prime)(\cdot)$ denotes the conditional law of $\TBOneO_h$ given $\TBOneO_{h-1} = \tau$ and $\TBOOne_{h-1} = \tau'$. We now use these conditional laws to define a one-step unimodular extension of $\r{h}$, denoted by $\Rstar{h+1} \in \P(\T{h+1})$.

\begin{definition}[One-step unimodular extension]
First, sample $\TB \in \T{h}$ with law $\r{h}$ (i.e., $\Law(\TB) = \r{h}$). If $\dego = 0$, stop and set the unimodular extension to be the root. If $\dego \geq 1$, independently for each  $v \in N_o(\TB)$, sample  $\TB^\prime \in \T{h} \times \Y$ with law $\PU{h}(\TBOV_{h-1},\TBVO_{h-1})(\cdot)$ and replace $\TBOV_{h-1}$ with $\TB^\prime$ to obtain a $\T{h+1}$ random element. Set $\Rstar{h+1}$ as the law of the resultant $\T{h+1}$ random element. In other words, for $\tau \in \T{h}$, $\Rstar{h}(\tau)$ is given by
\begin{align}
\Rstar{h}(\tau) = \r{h-1}(\tau_{h-1}) \prod_{v \in N_o(\tau)} \PU{h-1}(\TOV_{h-2}, \TVO_{h-2})(\TOV_{h-1}).
\label{eqn:Rstarh}
\end{align}
\label{def:unimod-extension}
\end{definition}

The expression in \eqref{eqn:Rstarh} clearly shows that $\Rstar{h}$ is an extension of $\r{h-1}$ because
\begin{align}
    (\Rstar{h})_{h-1} = \r{h-1}.
    \label{eqn:UGW-marginal}
\end{align}

\begin{remark}
\label{rmk:unimod-full-extension}   
\begin{enumerate}
    \item We can express $\PU{h}$ in terms of $\SBRM{h}$ and some combinatorial quantities; see \eqref{eqn:PU}. This alternative expression will be used later in some of the proofs.
    \item     When $h = 1$, a similar extension procedure (referred to as the ``tiling procedure'')  has been carried out in a different context in  \cite[Section 4.2]{LacRamWu20} and \cite[Proposition 6.2]{Gan22} for certain models of interacting diffusions and jump processes, respectively,  to extend the ``local equation'' that describes the root-and-neighborhood marginal law of a  particle system to the law of the full particle system on the infinite random tree.
\end{enumerate}
\end{remark}

\subsection{Statement of the component empirical measure LDP}
\label{subs-compLDP}

We are now in a position to introduce the rate functions associated with the general component empirical measure LDP. 
Let $h \geq 2$. Let $\r{h} \in \P(\T{h})$ be admissible, suppose that $0 < \Exp_{\r{h}}\sqpr{\dego} < \infty$, and recall the definition of $\Rstar{h}$  from \eqref{eqn:Rstarh}.  It can be shown that $\r{h} \ll \Rstar{h}$ and  $\SBRM{h} \ll \SBRstarM{h}$ (see Lemma \ref{lemma:preliminary-prob-prop}).  Define the probability measures $\RP_h, \RCP_h \in \P(\T{h})$  by
\begin{align}
\RP_h & := \Rstar{h}, \label{eqn:RPh-def} \\ 
\frac{d\RCP_h}{d\Rstar{h}} (\tau) & := \prod_{v \in N_o(\tau)} \frac{d\SBRM{h}}{d\SBRstarM{h}}(\TOV_{h-1}, \TVO_{h-1}), \quad \tau \in \T{h}. \label{eqn:RCPh}
\end{align} 

\begin{remark}
It can be shown that  $\RCP_h$ is indeed a probability measure (see Section \ref{sec:alternate-form-h}). It is also true that
$\SBRRMh{h} = \SBRstarRM{h}$ (see Lemma \ref{lemma:preliminary-prob-prop}), and hence,
\begin{align}
\frac{d\RP_h}{d\Rstar{h}}(\tau) &= \prod_{v \in N_o(\tau)} \frac{d\SBRRMh{h}}{d\SBRstarRM{h}}(\TOV_{h-1}), \quad \tau \in \T{h}.
\label{eqn:RPh}
    \end{align}
For $h=1$, set $\RP_1 = \MP$, which is defined analogously in \eqref{eqn:RP}. Also, if $\rho \in \PUB$, it can be shown that $\r{h}$ is admissible for all $h \in \N$, see Remark \ref{remark:admissibility}.
\label{rmk:rh-rstarh}
\end{remark}

We now introduce a family of functions $\calI_{\b, \eta}: \P(\Ginf) \to [0, \infty]$, $\b \geq 0$ and $\eta \in \PUB$, as follows:
\begin{align}
\calI_{\b, \eta}(\rho) : = \begin{dcases}
H(\r{o} \| \nu) & \text{if } \beta = 0 \text{ and } \Exp_{\rho}[\dego] = 0, \\
\frac{1}{2} \sum_{h \geq 1} \sqpr{ H(\r{h} \| \RP_h) + H(\r{h} \| \RCP_h)} & \text{if } \beta > 0, \,  \rho \in \PUB \,  \text{ and } \r{1} \ll \eta_1,\\
\infty & \text{otherwise}.
\end{dcases}
\label{eqn:calI-generic}
\end{align}
Here, $\eta$ plays the role of the ``true law'' that satisfies the following assumption:
\begin{assumption}
$\eta$ is the law of a random tree that has i.i.d.\,vertex marks with law $\nu$ and i.i.d.\,edge marks (independent of the vertex marks) with law $\xi$, both independent of the tree structure, with $\Exp_{\eta}\sqpr{\dego} = \b$.
\label{assm:generic-eta-def}
\end{assumption}
We can now state our full LDP.
\begin{theorem}[LDP for the component empirical measure]
\label{thm:iid}
The sequence $\{U_n^{\CM}\}$ (respectively, $\{U_n^{\FE}\}$ and $\{U_n^{\ER}\}$) satisfies the LDP  on $\P(\Ginf)$ with rate function $\calI^{\CM}$ (respectively, $\calI^{\FE}$ and $\calI^{\ER}$), where
\begin{align}
\calI^{\CM}(\rho) & := 
\begin{dcases}
\calI_{\k, \etaa}(\rho) & \text{ if } \rho_{o, \text{deg}} = \alpha,\\
\infty & \text{ otherwise},
\end{dcases}
\label{eqn:I-CM-iid-new} \\
\calI^{\FE}(\rho) &:= \calI_{\k, \etab{\k}}(\rho),
\label{eqn:I-FE-iid-new} \\
\calI^{\ER}(\rho) & := 
\begin{dcases}
\ell_\k(\b') + \calI_{\b', \etab{\b'}}(\rho) & \text{ if }  \b' : = \Exp_{\rho}[\dego] < \infty,\\
\infty & \text{ otherwise},
\end{dcases}
\label{eqn:I-ER-iid-new}
\end{align}
where $\ell_k$ is defined in  \eqref{eqn:def-ell}  and  $\etab{\k}$, $\etaa$ and $\etab{\b}$ are defined in Remark \ref{remark:local-convergence}.
\end{theorem}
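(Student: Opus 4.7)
The plan is to combine a previously established LDP in combinatorial form with a purely analytic identification of rate functions. The LDPs for $\{U_n^\CM\}$, $\{U_n^\FE\}$, $\{U_n^\ER\}$ themselves have been proved, in the respective settings, in \cite{CheRamYas23} and \cite{BalOlietal22}, but with rate functions expressed combinatorially in the style of \eqref{eqn:rate-fn-illusrate} (augmented by higher-depth edge-multiplicity and microstate-entropy terms for component measures). Thus what Theorem \ref{thm:iid} really requires is to show that those combinatorial rate functions coincide with the clean relative-entropy form $\calI^\CM, \calI^\FE, \calI^\ER$ in \eqref{eqn:I-CM-iid-new}--\eqref{eqn:I-ER-iid-new}. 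I would carry this out in two reductions, matching the description in Section \ref{subs-ram}.

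First, I would establish an intermediate representation $\calItilde_{\b,\eta}$ that rewrites the combinatorial rate function as a signed sum of relative entropies with respect to the true law $\eta$ (the content of Theorem \ref{thm:iid-old}). The idea is to reinterpret each combinatorial ingredient as a log-likelihood ratio under $\eta$: the $\bar s(\k)$ microstate-entropy term and the log-factorial contributions $\Exp_\mu[\log((E_1(x,x')(\TB))!)]$ arise naturally from Stirling-style bookkeeping for multinomial/Poisson counts, while $\tfrac{\k}{2}H(\mu^{1,o})$ and its higher-depth analogues are generated by cross-over of the i.i.d.\,mark factorization under $\eta$. Since $\eta$ has i.i.d.\,marks, all these terms become differences of relative entropies of marginals of $\r{h}$ against the corresponding marginals of $\eta$, and the only graph-specific residue is the $\ell_\k(\b')$ correction in the $\ER$ case, which measures deviation of $\Exp_\rho[\dego]$ from $\k$.

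Second, and this is the substantive step, I would prove $\calItilde_{\b,\eta}(\rho) = \calI_{\b,\eta}(\rho)$ for admissible $\rho \in \PUB$. The key tools are unimodularity, the mass transport principle \eqref{eqn:unimod}, and the Ulam--Harris--Neveu labeling of Section \ref{sec:labeling}. Using mass transport, one converts sums over vertices seen from $o$ into sums over edges incident to $o$; combined with the exchangeability of children produced by the labeling, this identifies the covariance marginal $\SBRM{h}$ with a size-biased edge marginal in the sense of \cite[Section 4.2]{BacBorSze22}, and identifies the one-step unimodular extension $\Rstar{h}$ of Definition \ref{def:unimod-extension} with a conditional law under $\r{h}$. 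With these identifications in hand, the telescoping of $\sum_h [H(\r{h}\|\RP_h) + H(\r{h}\|\RCP_h)]$ uses the consistency $(\Rstar{h})_{h-1} = \r{h-1}$ from \eqref{eqn:UGW-marginal} to collapse the intermediate signed form into exactly the clean form \eqref{eqn:calI-generic}.

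The main obstacle is getting this second step to work at all depths $h\ge 2$. Unlike the $h=1$ case, where $\MP$ and $\MCP$ are defined by explicit product formulas, $\RP_h$ and $\RCP_h$ are only defined via densities against $\Rstar{h}$, so one must prove that they are genuine probability measures (extending Lemma \ref{lemma:prob-measures-h=1}) and identify their marginals. I would handle this by an induction on $h$: assuming the identification at depth $h-1$, use unimodularity of $\r{h}$ to verify that $\int d\RCP_h = 1$ via a mass-transport computation over edges at depth exactly $h-1$, and to match $\SBRRMh{h}$ with $\SBRstarRM{h}$ as in Remark \ref{rmk:rh-rstarh}. A useful byproduct, which I would record as in Corollary \ref{cor:sigma-alternative}, is an alternative characterization of the microstate entropy $\bar s$ of \cite{BorCap15,BacBorSze22} in terms of these conditional laws. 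Once the identification $\calItilde = \calI$ is in place for general admissible $\rho$, the three named rate functions follow by taking $\eta = \etaa$ (for $\CM$), $\eta = \etab{\k}$ (for $\FE$), and $\eta = \etab{\b'}$ together with the Poisson mean-deviation cost $\ell_\k(\b')$ (for $\ER$), and by observing that any $\rho$ with $\calItilde(\rho)<\infty$ is automatically unimodular as a weak limit of component empirical measures.
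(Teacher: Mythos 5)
Your proposal follows essentially the same route as the paper: invoke the previously established LDPs with combinatorial (microstate-entropy) rate functions from \cite{CheRamYas23,BalOlietal22}, pass to the intermediate signed relative-entropy form of Theorem \ref{thm:iid-old}, and then identify that form with $\calI_{\b,\eta}$ via unimodularity, the mass transport principle, exchangeability under the Ulam--Harris--Neveu labeling, and the verification that $\RP_h$ and $\RCP_h$ are genuine probability measures. The only cosmetic difference is that the paper performs the identification summand-by-summand in $h$ (Propositions \ref{prop:alternate-form-1} and \ref{prop:alternate-form-h}) rather than by a telescoping collapse across depths, which does not change the substance of the argument.
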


Note that the rate functions in Theorem \ref{thm:iid} consist of a countable sum of term, with the $h$th summand being an average of two relative entropies that capture the entropic cost of deviation of $\r{h}$ from $\RP_h$ and $\RCP_h$. They have a similar interpretation as in the case of the neighborhood empirical measure. 

\begin{remark}
\label{rmk:fe-er-compare}
Although the families $\{U_n^\FE\}$ and $\{U_n^\ER\}$ both converge weakly to $\etab{\k}$ in $\P(\Ginf)$, note that the rate functions that govern their large deviations behavior are slightly different (although of a similar form). Since the number of edges in the $\FE$ case is fixed, the probability that the average degree of the root deviates from the true mean is superexponentially small, that is, $\calI^\FE(\rho) = \infty$ unless $\rho \in \PK$. However, for the $\ER$ sequence, deviations of the average degree are permissible at the large deviation scale, and this is captured by the additional term $\ell_{\kappa}$ in $\calI^\ER$. 
\end{remark}

\subsection{Discussion and outline of  proofs}
\label{sec:discussion}
The proof of Theorem \ref{thm:iid} is carried out in two steps.

\noindent {\bf Step 1:} 
As indicated in the introduction, the LDP for $\{U^\CM_n\}$ was proved in \cite{CheRamYas23} and the LDP for $\{U^\FE_n\}$ and $\{U^\ER_n\}$ were proved in \cite{BalOlietal22}, by leveraging the colored configuration model of \cite{BorCap15} and its extension in \cite{DelAna19} to marked graphs. The rate functions for these LDPs (denoted by $\calboldI^\CM$, $\calboldI^\FE$ and $\calboldI^\ER$) are expressed in terms of various combinatorial quantities, including the so-called microstate entropy, and thus their definitions are deferred to Section \ref{sec:iid-proof-complete}; ; see \eqref{eqn:I-CM-entropy-form}-\eqref{eqn:I-entropy-form-er}.   For each of the random graph sequences, we leverage admissibility properties of  $\rho_h$ to first rewrite these combinatorial quantities in terms of suitable relative entropies with respect to the true law $\eta$ (see Assumption \ref{assm:generic-eta-def}),  to yield the intermediate representation for the rate functions given in Theorem \ref{thm:iid-old} below. To define the rate functions, we first define  $\calItilde_{\b, \eta} : \P(\Ginf) \to [0, \infty]$, $\b \geq 0$, by  
\begin{align}
\calItilde_{\b, \eta}(\rho) : = \begin{dcases}
H(\r{o} \| \nu) & \text{if } \beta = 0 \text{ and } \Exp_{\rho}[\dego] = 0,\\
H(\r{1} \| \eta_1) - \frac{\beta}{2} H(\SBRM{1} \| \SBetabarM) &  \\
\quad + \sum_{h \geq 2} \sqpr{H(\r{h} \| \Rstar{h} ) - \frac{\beta}{2} H(\SBRM{h} \| \SBRstarM{h})} & \text{if } \beta > 0,  \rho \in \PUB, \r{1} \ll \eta_1, \\
& \text{ and } \EDLDRinf < \infty,\\ 
\infty & \text{ otherwise},
\end{dcases}
\label{eqn:calItilde-generic}
\end{align}
where $\eta$ plays the role of the ``true law'' as stated in Assumption \ref{assm:generic-eta-def}.

\begin{theorem}
The sequence $\{U_n^{\CM}\}$ (respectively, $\{U_n^{\FE}\}$ and $\{U_n^{\ER}\}$) satisfies the LDP  on $\P(\Ginf)$ with rate function $\calItilde^{\CM}$ (respectively, $\calItilde^{\FE}$ and $\calItilde^{\ER}$),
where
\begin{align}
\calItilde^{\CM}(\rho) &:= 
\begin{dcases}
\calItilde_{\k, \etaa}(\rho) & \text{ if } \rho_{o, \text{deg}} = \alpha,\\
\infty & \text{ otherwise},
\end{dcases} \label{eqn:I-CM-iid-old} \\
\calItilde^{\FE}(\rho) &:= \calItilde_{\k, \etab{\k}}(\rho), \label{eqn:I-FE-iid-old} \\
\calItilde^{\ER}(\rho) &:= 
\begin{dcases}
\ell_\k(\b') + \calItilde_{\b', \etab{\b'}}(\rho) & \text{ if }  \b' : = \Exp_{\rho}[\dego] < \infty,\\
\infty & \text{ otherwise}.
\end{dcases}
\label{eqn:I-ER-iid-old}
\end{align}

\label{thm:iid-old}
\end{theorem}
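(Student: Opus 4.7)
The plan is to take as input the LDPs already established in \cite{CheRamYas23,BalOlietal22}, whose rate functions $\calboldI^{\CM}$, $\calboldI^{\FE}$, $\calboldI^{\ER}$ are expressed in terms of combinatorial quantities (microstate entropies $\Sigma_h$, factorial terms counting child-subtree multiplicities, root-mark entropies, and, for $\ER$, the Poisson-mean cost $\ell_\k$). Since the LDPs themselves are granted, Theorem \ref{thm:iid-old} reduces to showing the pointwise identities $\calboldI^{\CM}=\calItilde^{\CM}$, $\calboldI^{\FE}=\calItilde^{\FE}$ and $\calboldI^{\ER}=\calItilde^{\ER}$ on their respective effective domains (i.e.\ for admissible $\rho \in \PUB$ with $\r{o,\text{deg}}$-constraint where applicable, $\r{1}\ll\eta_1$, and $\EDLDRinf<\infty$). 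Outside these domains both rate functions are infinite, so no separate argument is needed.

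First I would handle the $\CM$ case, which is the prototypical one. Fix admissible $\r{h}\in\P(\T{h})$ and use the explicit formula \eqref{eqn:Rstarh} for the one-step unimodular extension to write, for each $h\ge 2$,
\begin{equation*}
\log\frac{d\r{h}}{d\Rstar{h}}(\tau) \;=\; \log\frac{d\r{h}}{d\r{h-1}\otimes(\text{conditional})} - \sum_{v\in N_o(\tau)} \log\PU{h-1}(\TOV_{h-2},\TVO_{h-2})(\TOV_{h-1}).
\end{equation*}
Integrating against $\r{h}$ and invoking the alternative expression for $\PU{h-1}$ in terms of $\SBRM{h-1}$ together with certain combinatorial factors (the one promised in Remark \ref{rmk:unimod-full-extension}(1)), the root-neighbor sum $\sum_{v\in N_o(\tau)}(\cdots)$ transforms via the mass transport principle \eqref{eqn:unimod} into an expectation of a doubly-rooted functional, which is precisely $\tfrac{\beta}{2}H(\SBRM{h}\|\SBRstarM{h})$ up to the factorial microstate terms. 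This is the core computation and rests on: (i) admissibility of $\r{h}$, so that $\SBRM{h}$ is symmetric and the doubly-rooted expectations make sense; (ii) the definition of the size-biased marginal \eqref{eqn:SBRinf}, which introduces the factor $\beta$ and converts vertex-integrals into edge-integrals.

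Next I would telescope. By \eqref{eqn:UGW-marginal} the depth-$(h-1)$ marginal of $\Rstar{h}$ is $\r{h-1}$, so $H(\r{h}\|\Rstar{h})$ contains no contribution from the depths below $h-1$. Together with the base term $H(\r{1}\|\eta_1)-\tfrac{\beta}{2}H(\SBRM{1}\|\SBetabarM)$, the previous step reorganizes the combinatorial expression for $\calboldI^{\CM}(\rho)$ into the telescoping sum
\begin{equation*}
H(\r{1}\|\eta_1) - \tfrac{\beta}{2}H(\SBRM{1}\|\SBetabarM) + \sum_{h\ge 2}\Bigl[H(\r{h}\|\Rstar{h}) - \tfrac{\beta}{2}H(\SBRM{h}\|\SBRstarM{h})\Bigr],
\end{equation*}
which is precisely $\calItilde_{\k,\etaa}(\rho)$. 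The $\FE$ case is identical with $\eta=\etab{\k}$, while for $\ER$ the mean-degree cost $\ell_\k(\b')$ already appears in $\calboldI^{\ER}$ and is carried through unaltered to $\calItilde^{\ER}$; the rest of the argument proceeds with $\b'$ in place of $\k$ and $\eta=\etab{\b'}$.

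The main obstacle is the identification of the microstate entropy $\Sigma_h(\r{h})$ --- essentially a normalized logarithmic count of labeled marked depth-$h$ trees with empirical profile $\r{h}$ --- as the negative relative entropy $-\tfrac{\beta}{2}H(\SBRM{h}\|\SBRstarM{h})$ modulo the factorial and root-mark terms. This requires a Sanov/Stirling-type analysis at each depth, combined with the Ulam-Harris-Neveu labeling of Section \ref{sec:labeling}: under this labeling the children of any vertex are exchangeable, which is exactly what allows the multiplicities counted by $\Sigma_h$ to be interpreted as the entropy of resampling neighbor subtrees independently from $\PU{h-1}$, the law underlying $\Rstar{h}$. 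The admissibility hypothesis is indispensable for defining $\Rstar{h}$ and $\SBRstarM{h}$ symmetrically, while the finiteness $\EDLDRinf<\infty$ guarantees that the factorial expectations, each relative entropy, and the countable sum are all finite, justifying termwise manipulation and convergence. This last point is where I anticipate most of the technical effort, and I would expect the proof to invoke several of the auxiliary lemmas deferred to the appendices (on finiteness and on absolute continuity of $\r{h}$ with respect to $\Rstar{h}$).
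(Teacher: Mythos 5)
Your plan is essentially the paper's own proof: it too takes the combinatorial-form LDPs of \cite{CheRamYas23,BalOlietal22} as given (Theorem \ref{thm:old-iid-microstate-entropy}) and establishes the pointwise identities $\calboldI^{\CM}=\calItilde^{\CM}$, $\calboldI^{\FE}=\calItilde^{\FE}$, $\calboldI^{\ER}=\calItilde^{\ER}$ by expressing the microstate entropy as a depth-one term minus the telescoping sum $\sum_{h\ge 2}\bigl[H(\r{h}\|\Rstar{h})-\tfrac{\b}{2}H(\SBRM{h}\|\SBRstarM{h})\bigr]$ (the recursion of Lemma \ref{lemma:jh-recursion}, adapted from \cite[Remark 5.13]{BorCap15}) and then matching the depth-one contribution with $H(\r{1}\|\eta_1)-\tfrac{\b}{2}H(\SBRM{1}\|\SBetabarM)$ through an explicit expansion of $\eta_1$ (Lemmas \ref{lemma:j1-h-relation-CM} and \ref{lemma:j1-h-relation}). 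The one understatement is your claim that ``no separate argument is needed'' off the effective domain: the paper still needs the case analysis of Lemma \ref{lemma:new-old-fe-er} (e.g.\ Lemma \ref{lemma:EDLDinf-cond} when $\EDLDRinf=\infty$, and the implication $J_1(\r{1})=-\infty\Rightarrow H(\r{1}\|\eta_1)=\infty$) to verify that the two rate functions are simultaneously infinite, though this is precisely the bookkeeping your sketch defers to the auxiliary lemmas.
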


The proof of Theorem \ref{thm:iid-old} is carried out in Section \ref{sec:iid-proof-complete}. Although $\calItilde_{\b, \eta}$ is defined in terms of differences between relative entropies, both of which could possibly be $\infty$, we show in Lemma \ref{lemma:finiteness} that  $\calItilde_{\b, \eta}$ is well defined.\\

\noindent {\bf Step 2:}  
This step is more involved and entails showing the following: 
\begin{theorem}
The following identity is satisfied: $\calI_{\b, \eta} = \calItilde_{\b, \eta}$.
\label{thm:alternate-form-theorem}
\end{theorem}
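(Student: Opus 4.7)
Plan. I would verify $\calI_{\b, \eta} = \calItilde_{\b, \eta}$ layer by layer after disposing of the degenerate case $\b = 0$, where both sides reduce to $H(\r{o} \| \nu)$. For $\b > 0$, $\rho \in \PUB$, and $\r{1} \ll \eta_1$, the identity follows from the per-layer claims
\begin{align*}
\frac{1}{2} \sqpr{H(\r{h} \| \RP_h) + H(\r{h} \| \RCP_h)} = H(\r{h} \| \Rstar{h}) - \frac{\b}{2} H(\SBRM{h} \| \SBRstarM{h}), \qquad h \geq 2,
\end{align*}
together with
\begin{align*}
\frac{1}{2}\sqpr{H(\r{1} \| \MP) + H(\r{1} \| \MCP)} = H(\r{1} \| \eta_1) - \frac{\b}{2} H(\SBRM{1} \| \SBetabarM).
\end{align*}
Summing over $h \geq 1$ then yields the full identity.

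For $h \geq 2$, since $\RP_h = \Rstar{h}$ by Remark \ref{rmk:rh-rstarh}, the claim reduces to showing $H(\r{h} \| \RCP_h) = H(\r{h} \| \Rstar{h}) - \b H(\SBRM{h} \| \SBRstarM{h})$. From the definition \eqref{eqn:RCPh},
\begin{align*}
-\log \frac{d\RCP_h}{d\Rstar{h}}(\tau) = -\sum_{v \in N_o(\tau)} \log \frac{d\SBRM{h}}{d\SBRstarM{h}}(\TOV_{h-1}, \TVO_{h-1}).
\end{align*}
Taking expectation under $\r{h}$ and exploiting the fact that, under the Ulam-Harris-Neveu scheme, the label ``$1$'' is chosen uniformly among neighbors of the root (so the size-biased measure satisfies $\b \Exp_{\SBRM{h}}[f] = \Exp_{\r{h}}[\sum_{v \in N_o(\TB)} f(\TBOV_{h-1}, \TBVO_{h-1})]$), the sum converts to $\b H(\SBRM{h} \| \SBRstarM{h})$. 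Averaging with $H(\r{h} \| \RP_h) = H(\r{h} \| \Rstar{h})$ then closes the identity.

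The $h = 1$ case is more delicate because $\MP$ and $\MCP$ are defined with respect to the true law $\eta_1$ rather than via a unimodular extension of $\r{0}$. Expanding $H(\r{1} \| \MP)$ and $H(\r{1} \| \MCP)$ via \eqref{eqn:RP}--\eqref{eqn:RCP} and applying the same size-biasing identity yields
\begin{align*}
H(\r{1} \| \MP) & = H(\r{1} \| \eta_1) - \b H(\SBRRMh{1} \| \SBetabarRM),\\
H(\r{1} \| \MCP) & = H(\r{1} \| \eta_1) - \b \sqpr{H(\SBRM{1} \| \SBetabarM) - H(\SBRMZero{1} \| \SBetabarMZero)},
\end{align*}
the second line using the chain rule for relative entropy. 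Averaging matches the target precisely when $H(\SBRRMh{1} \| \SBetabarRM) = H(\SBRMZero{1} \| \SBetabarMZero)$, which is supplied by \emph{admissibility} of $\r{1}$ (forcing $\SBRMZero{1} = \SBRRMh{1}$) combined with \emph{unimodularity} of $\eta$ (forcing $\SBetabarMZero = \SBetabarRM$). This is the central structural role played by unimodularity in the identification.

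The main obstacles are peripheral but nontrivial. First, $\RCP_h$ must be shown to be a genuine probability measure; this reduces to checking that a certain normalization equals one, and follows from the unimodularity of $\Rstar{h}$ (see Remark \ref{rmk:rh-rstarh}) together with the mass-transport principle applied to the density in \eqref{eqn:RCPh}. Second, the effective domains of $\calI_{\b, \eta}$ and $\calItilde_{\b, \eta}$ must be reconciled: in particular, finiteness of $\calI_{\b, \eta}(\rho)$ should be shown to force $\EDLDRinf < \infty$, and the differences of possibly infinite relative entropies in $\calItilde_{\b, \eta}$ must be well-defined, for which I would invoke Lemma \ref{lemma:finiteness}. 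Once the per-layer identities are summed over $h \geq 1$ and these technical matters are resolved, the equality $\calI_{\b, \eta} = \calItilde_{\b, \eta}$ on all of $\P(\Ginf)$ follows.
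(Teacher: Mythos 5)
Your core argument is correct and is essentially the paper's own route: for $h\geq 2$ you use $\RP_h=\Rstar{h}$, the chain rule $H(\r{h}\|\Rstar{h})=H(\r{h}\|\RCP_h)+\Exp_{\r{h}}[\log(d\RCP_h/d\Rstar{h})]$, and the size-biasing/exchangeability identity $\b\,\Exp_{\SBRM{h}}[f]=\Exp_{\r{h}}[\sum_{v\in N_o(\TB)}f(\TBOV_{h-1},\TBVO_{h-1})]$ coming from the uniform Ulam--Harris--Neveu labeling, exactly as in Lemma \ref{lemma:PIRPIRstar-simplify} and Proposition \ref{prop:alternate-form-h}. Your $h=1$ rearrangement (converting the products in \eqref{eqn:RP}--\eqref{eqn:RCP} into $\b$ times relative entropies of size-biased marginals and then using $\SBRMZero{1}=\SBRRMh{1}$ and $\SBetabarMZero=\SBetabarRM$) is equivalent to the paper's Proposition \ref{prop:alternate-form-1}: the paper realizes the same marginal symmetry by applying the mass transport principle directly to exchange $\TBVO_0$ and $\TBOV_0$ in \eqref{eqn:h=1-proof-exchange-unimod}, and admissibility of $\r{1}$ is itself a consequence of that MTP (Remark \ref{remark:admissibility}), so the two presentations carry the same content. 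Your deferral of the normalization of $\MP$, $\MCP$, $\RCP_h$ and of the finiteness bookkeeping to Lemmas \ref{lemma:prob-measures-h=1}, \ref{lemma:preliminary-prob-prop} and \ref{lemma:finiteness} matches the paper (though, strictly, those normalizations are proved via the conditional-independence structure of $\eta_1$ and of the one-step extension $\Rstar{h}$, not by an MTP applied to the density).

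The one genuine hole is the case $\b>0$, $\rho\in\PUB$, $\r{1}\ll\eta_1$, but $\EDLDRinf=\infty$. There $\calItilde_{\b,\eta}(\rho)=\infty$ by fiat in \eqref{eqn:calItilde-generic}, while the definition \eqref{eqn:calI-generic} of $\calI_{\b,\eta}$ imposes no such condition, so one must actually prove $\calI_{\b,\eta}(\rho)=\infty$; you state that "finiteness of $\calI_{\b,\eta}(\rho)$ should be shown to force $\EDLDRinf<\infty$" but give no mechanism, and Lemma \ref{lemma:finiteness} cannot supply it since that lemma assumes $\EDLDRinf<\infty$. The paper closes this via two observations: first, since $\r{1}\ll\eta_1$ and $\alpha$ has bounded support, this case can only occur with $\eta=\etab{\b}$; second, by the chain rule, $2\calI_{\b,\eta}(\rho)\geq H(\r{1}\|\RP_1)\geq H(\rho_{o,\text{deg}}\|\etab{\b}_{o,\text{deg}})$, and Lemma \ref{lemma:EDLDinf-cond} shows this last quantity is infinite when $\EDLDRinf=\infty$, by bounding $H(\rho_{o,\text{deg}})\leq\b\log 2$ through comparison with $\Geom$ and then using Stirling on the Poisson weights, which produces a $+c_2\,\EDLDRinf$ term. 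Adding this argument (or an equivalent one) is what your proposal needs to be complete; everything else is the paper's proof in mildly different clothing.
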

\noindent The proof of Theorem exploits various properties of relative entropy, and uses unimodularity, the mass transport principle and random tree labelings to show that the relative entropy terms appearing in \eqref{eqn:calItilde-generic} can be expressed as 
expectations under the marginal of a size-biased distribution and to express the unimodular extensions in terms of suitable conditional laws.
\begin{proof}[Proof of Theorem \ref{thm:iid}]
    The result follows from  Theorems \ref{thm:alternate-form-theorem} and \ref{thm:iid-old}, and the definitions of the rate functions in \eqref{eqn:I-CM-iid-old}-\eqref{eqn:I-ER-iid-old} and \eqref{eqn:I-CM-iid-new}-\eqref{eqn:I-ER-iid-new}.
\end{proof}

\section{Gibbs conditioning principle: Proof of Proposition \ref{prop:gibbs}}
\label{sec:gibbs-proof}
Consider the optimization problem 
\begin{align}
v^*: = \inf \pr{\frakI^\CM(\mu) : \mu \in \P(\TX), \, \langle \mu, f \rangle \geq  c}. \label{eqn:gibbs-opt}
\end{align}
Note that our setting satisfies the assumptions required for the standard Gibbs conditioning principle for large deviations \cite[Theorem 7.1]{Leo10}. 
% Omit the discussion below for AoP version. Refer to arxiv for details.
% Indeed, it is easy to verify that the conditions stated in \cite[Theorem 7.1]{Leo10} are satisfied when $C_\delta :=\{\mu \in \P(\TXM): \langle \mu, f \rangle \geq c-\delta \}$ for $\delta > 0$, where $\TXM : = \{(\tau, X) \in \TX : \text{deg}_\tau(o) \leq M\} \subset \TX$.
Indeed, 
\begin{enumerate}
    \item With $\TXM : = \{(\tau, X) \in \TX : \text{deg}_\tau(o) \leq M\} \subset \TX$,  by Theorem \ref{thm:iid-nbd} and \cite[Lemma 4.1.5(b)]{DemZei98}, the sequence $\{L_n\} = \{L^\CM_n\}$ satisfies the LDP on $\P(\TXM)$ with rate function $\frakI^\CM$ restricted to $\P(\TXM)$;
    \item The map $\mu \mapsto \langle \mu, f \rangle $ is linear and continuous on  $\P(\TXM)$;
    \item The constraint set $C:=\{\mu \in \P(\TXM): \langle \mu, f \rangle \geq c\}$ satisfies
    \begin{enumerate}
        \item $\inf_{\mu \in C} \frakI^\CM(\mu) < \infty$;
        \item Let $\delta > 0$. With $C_\delta :=\{\mu \in \P(\TXM): \langle \mu, f \rangle \geq c-\delta \}$, we have $C = \cap_{\delta > 0} C_\delta$, and $\Prob(L^\CM_n \in C_\delta) > 0$ for all $n$;
        \item $C$ is contained in the interior of $C_\delta$ for all $\delta > 0$.
    \end{enumerate}
\end{enumerate}
Moreover, since $\{\mu \in \P(\TX) : \mu_{o, \text{deg}} = \alpha, \,  \langle \mu, f \rangle \geq c\}$ is closed in $\P(\TX)$, and $\frakI^\CM$ has compact level sets (which is a consequence of the LDP in Theorem \ref{thm:iid-nbd}), it follows that the infimum in \eqref{eqn:gibbs-opt} is attained. By \cite[Theorem 7.1]{Leo10}, the proof would be complete if we show that $\mu^{*,c}$ defined in \eqref{eqn:mustarc} is the unique solution to the optimization problem  \eqref{eqn:gibbs-opt}. We do this in three steps.

{\bf Step 1:} We first show that, for any admissible $\mu \in \P(\TX)$, $\langle \mu, f\rangle$ depends on $\mu$ only via the joint law of the root degree and root mark. By \eqref{eqn:SBRinf} and the admissibility of $\mu$, we have 
\begin{align}
\frac{1}{\k}  \Exp_{\mu}\sqpr{\dego \indf\{\bfX_1 = a\}} = \SBMuRM(a) = \SBMuMZero(a) = \frac{1}{\k}\Exp_{\mu}\sqpr{\dego \indf\{\bfX_o 
= a\}}.
\label{eqn:gibbs-step0}
\end{align}
Also, since the random variables $\{\bfX_1, \ldots \bfX_i\}$ given $\dego = i \geq 1$ are exchangeable, by the admissibility of $\mu$, we also have
\begin{align}
\langle \mu, f \rangle = \Exp_{\mu} \sqpr{\sum_{v \in N_o(\TB)} h(\bfX_v)} =  \Exp_{\mu} \sqpr{\dego h(\bfX_1)}  = \k \langle \SBMuRM, h \rangle = \k \langle \SBMuMZero, h \rangle  = \Exp_{\mu} \sqpr{\dego h(\bfX_o)}.
\label{eqn:gibbs-constraint-step1}
\end{align}
This proves the claim.

{\bf Step 2:} Show that $v^*$ in \eqref{eqn:gibbs-opt} solves the following simpler constrained optimization problem:
\begin{align}
v^* = \inf \biggr\{ H(\gamma \| \alpha \otimes \nu) : \gamma \in \P([M_0] \times \X), \,  & \sum_{x \in \X} \gamma(n,x) = \alpha(n) \, \forall n \in [M_0], \, \sum_{n=0}^M \sum_{x \in \X} n h(x) \gamma(n, x) \geq c\biggr\}, \label{eqn:gibbs-new-inf}
\end{align}
where $[M_0] = \{0,1, \ldots, M\}$. 

For any $\mu \in \P(\TX)$, we introduce the following notation: let $\mu_{\mathbf{d}_o, \bfX_o}$ denote the joint law of $(\dego, \bfX_o) = (\mathbf{d}_o, \bfX_o)$ when $(\TB, \bfX)$ has law $\mu$, let $\mu_{\bfX_o | \, \mathbf{d}_o}$ denote the conditional law of $\bfX_o$ given $\mathbf{d}_o$, and let $\mu_{\bfX_1, \ldots, \bfX_{\mathbf{d}_o} |\,  \mathbf{d}_o, \bfX_o}$ denote the conditional joint law of $(\bfX_1, \ldots, \bfX_{\mathbf{d}_o})$ given $(\mathbf{d}_o, \bfX_o)$. 
Given an admissible $\mu$, we now define an admissible $\psi$ such that $\psi_{\mathbf{d}_o, \bfX_o} = \mu_{\mathbf{d}_o, \bfX_o}$ and $\frakI^\CM(\mu) \geq \frakI^\CM(\psi)$. Define
\begin{align}
\psi(\tau, X) := \alpha(\text{deg}_\tau(o)) \, \mu_{\bfX_o | \,  \mathbf{d}_o}(X_o | \, \text{deg}_\tau(o)) \prod_{v \in N_o(\tau)} \SBMuRM(X_v), \quad (\tau, X) \in \TX.\label{eqn:gibbs-mu-underline}
\end{align}
We first verify that $\psi$ is admissible. Note that $\Exp_{\psi}[\mathbf{d}_o] = \k$ since the degree distribution of $\psi$ is $\alpha$. For $a, b \in \X$, using the fact that $\Exp_{\psi}\sqpr{\indf\{\bfX_1 = a\} | \mathbf{d}_o} = \SBMuRM(a)$ on the set $\mathbf{d}_o \geq 1$,  \eqref{eqn:gibbs-step0}, and the identity  $\psi_{\mathbf{d}_o, \bfX_o} = \mu_{\mathbf{d}_o, \bfX_o}$,  we have

\begin{align*}
\bar{\psi}(a,b) & = \frac{1}{\k} \Exp_{\psi}\sqpr{\mathbf{d}_o \indf \{\bfX_1 = a, \bfX_o = b\}} \\ 
& = \frac{1}{\k} \Exp_{\psi} \sqpr{ \Exp_{\psi} \sqpr{\indf\{\bfX_o = b\} | \, \mathbf{d}_o } \,  \mathbf{d}_o \,   \Exp_{\psi} \sqpr{\indf\{\bfX_1 = a\} | \, \mathbf{d}_o} } \\
& =  \frac{\SBMuRM(a)}{\k} \Exp_{\psi} \sqpr{\indf\{\bfX_o = b\} \mathbf{d}_o } \\
& = \SBMuRM(a) \SBMuRM(b). \numberthis \label{gibbs:step3}    
\end{align*}
It follows that $\psi$ is admissible. Next, we show that $\frakI^\CM(\mu) \geq \frakI^\CM(\psi)$. 
Using the chain rule and non-negativity of relative entropy, we obtain
\begin{align*}
    H(\mu \| \MP) & = H(\mu_{\mathbf{d}_o, \bfX_o} \| \MP_{\mathbf{d}_o, \bfX_o}) \\
    &\qquad \qquad + \int H(\mu_{\bfX_1, \ldots, \bfX_{\mathbf{d}_o} | \mathbf{d}_o, \bfX_o} \| \MP_{\bfX_1, \ldots, \bfX_{\mathbf{d}_o} | \mathbf{d}_o, \bfX_o}) \, d \mu_{\mathbf{d}_o, \bfX_o} \\
    & \geq H(\mu_{\mathbf{d}_o, \bfX_o} \| \MP_{\mathbf{d}_o, \bfX_o}). \numberthis \label{eqn:gibbs-step4}
\end{align*}
An exact analogous argument yields
\begin{align*}
H(\mu \| \MCP)
& \geq H(\mu_{\mathbf{d}_o, \bfX_o} \| \MCP_{\mathbf{d}_o, \bfX_o}). \numberthis \label{eqn:gibbs-step5}
\end{align*}
By \eqref{eqn:gibbs-mu-underline}, \eqref{eqn:RP} and \eqref{eqn:RCP},  the conditional joint law of the leaf marks given the degree and the root mark is the same under $\psi$, $\widecheck{\psi}$ and $\widehat{\psi}$.
Also, note that $\psi_{\mathbf{d}_o, \bfX_o} = \mu_{\mathbf{d}_o, \bfX_o}$, and $\MP_{\mathbf{d}_o, \bfX_o} = \MCP_{\mathbf{d}_o, \bfX_o} = \alpha \otimes \nu$. Together with \eqref{eqn:gibbs-step4},  \eqref{eqn:gibbs-step5}, and the definition of $\frakI^\CM$ in \eqref{eqn:J-CM-iid},  it follows that  
\begin{align*}
\frakI^\CM(\mu) \geq \frakI^\CM(\psi) = H(\psi_{\mathbf{d}_o, \bfX_o} \| \alpha \otimes \nu).
\end{align*}
Using \eqref{eqn:gibbs-constraint-step1}, \eqref{eqn:gibbs-mu-underline} and the fact that $\psi_{\mathbf{d}_o, \bfX_o} = \mu_{\mathbf{d}_o, \bfX_o}$, \eqref{eqn:gibbs-new-inf} follows.

{\bf Step 3:} We finally solve the finite-dimensional constrained optimization problem in \eqref{eqn:gibbs-new-inf}. First, note that the constraint set is closed and convex. Since $H(\cdot \| \alpha \otimes \nu)$ is strictly convex and has compact lower level sets, it follows that there exists a unique minimizer, which we denote by $\gamma$. The necessary conditions for optimality imply that there exist $\lambda  \geq 0 $ and $\lambda' \in \R$ such that
\begin{align}
1 + \log\cpr{\frac{\gamma(n, x)}{\alpha(n) \nu(x)}} - \lambda' - \lambda n h(x) & = 0 \quad \text{ for all } n \in \{0,1,\ldots, M\}, \, x \in \X,  \label{eqn:gibbs-lagrange-step1}\\
\lambda \cpr{c - \sum_{n =0}^M \sum_{x \in \X} n h(x) \gamma(n,x) } & = 0.  \label{eqn:gibbs-lagrange-step2}
\end{align}
From \eqref{eqn:gibbs-lagrange-step1}, the constraint $\sum_{x \in \X} \gamma(n,x) = \alpha(n)$ for all $n \in \{0,1, \ldots, M\}$ implies that
\begin{align*}
\exp\{1 - \lambda' \} = \sum_{x \in \X} \nu(x) \exp\{\lambda n h(x)\}.
\end{align*}
Using this, \eqref{eqn:gibbs-lagrange-step1} reduces to
\begin{align}
\gamma(n,x) = \alpha(n) \frac{\nu(x) \exp\{\lambda n h(x)\}}{\sum_{y \in \X} \nu(y) \exp\{\lambda n h(y)\} }, \quad n \in \{0,1,\ldots, M\}, \, x \in \X. \label{eqn:gibbs-opt-form}
\end{align}
From \eqref{eqn:gibbs-lagrange-step2}, it follows that either $\lambda = 0$ or $\sum_{n,x} n h(x) \gamma(n,x) = c$. However, $\lambda = 0$ is not possible since it would imply that $\gamma = \alpha \otimes \nu$ is the joint law of the degree and the root mark under $\etaa_1$. But $\langle \etaa_1, f \rangle < c$ by assumption, hence the condition $\sum_{n=0}^M \sum_{x \in \X} n h(x) \gamma(n, x) \geq c$ is violated. Therefore,  we must have $\sum_{n,x} n h(x) \gamma(n,x) = c$, that is,
\begin{align*}
c =  \frac{\sum_{n = 0}^M \sum_{x \in \X} n \alpha(n)\nu(x)  h(x) \exp\{\lambda n h(x)\}}{\sum_{y \in \X} \nu(y) \exp\{\lambda n h(y)\} } =:g(\lambda).
\end{align*}
To show the existence of $\lambda$ satisfying the above, note that $g$ is continuous in $\lambda$. We have $g(0) = \Exp_{\etaa_1}[f] < c$ by assumption. Also, $g(\lambda) \to \k \max_{x \in \X}h(x) > c$ as $\lambda \to \infty$, whence there exists $\lambda > 0$ such that the first equality in the previous display holds. It follows that the optimizer of \eqref{eqn:gibbs-new-inf} is given by \eqref{eqn:gibbs-opt-form} where $\lambda > 0$ is chosen so that $\sum_{n, x} n h(x) \gamma(n, x) = c$. The uniqueness of the optimizer $\gamma$ follows from the strict convexity of $H(\cdot \| \alpha \otimes \nu)$. Finally, given this optimizer $\gamma$ for the problem in \eqref{eqn:gibbs-new-inf}, the unique optimizer for the optimization problem in \eqref{eqn:gibbs-opt} can be constructed via \eqref{eqn:gibbs-mu-underline}. This completes the proof.
\qed

\section{Proof of the alternative representation}
\label{sec:alternate-form}
In this section, we prove Theorem \ref{thm:alternate-form-theorem}. 
We show equality of the first summands of $\calI_{\b, \eta}$ and $\calItilde_{\b, \eta}$ in Section \ref{sec:alternate-form-1} and equality of each of the subsequent summands in Section \ref{sec:alternate-form-h}. The proofs rely on some preliminary results first established in Section \ref{sec:alternate-form:prelims}. Throughout this section, we fix $\b \geq 0$ and the ``true law'' $\eta \in \PUB$ as defined in Assumption \ref{assm:generic-eta-def}.
\subsection{Labeled trees}
\label{sec:alternate-form:prelims}
We start with two simple observations about $\P(\Tinf)$ elements and the true law $\eta$.

\begin{remark}[Exchangeability]
Let $h \in \N$ and let  $\r{h} \in \P(\T{h})$. Let $\TB$ be a $\T{h}$ random element with law $\r{h}$, that is, $\Law(\TB) = \r{h}$. Using the Ulam-Harris-Neveu labeling described in Section \ref{sec:labeling}, one has the following exchangeability properties: 
\begin{enumerate}
    \item Conditioned on $\NO = i \geq 1$,  the random variables $\{(\TBOV_{h-1}, \TBVO_{h-1}),  v \in [i])\}$ are exchangeable.
    \item For any $v \in N_o(\TB)$, the conditional marginal law of $(\TBOV_{h-1}, \TBVO_{h-1})$ given $\NO = i \geq 1$ is the same as the conditional marginal law of $(\TBOOne_{h-1}, \TBOneO_{h-1})$ given $\NO = i$. That is,
    \begin{align*}
        \Law\cpr{(\TBOV_{h-1}, \TBVO_{h-1}) \mid \NO = i } = \Law((\TBOOne_{h-1}, \TBOneO_{h-1}) \mid \NO = i).
    \end{align*}
Therefore, if $f$ is a measurable function on $\Y \times \T{h-1}$ with suitable integrability properties, then 
\begin{align}
\Exp_{\r{h}} &  \sqpr{\sum_{v \in N_o(\TB)} f\cpr{\TBOV_{h-1}, \TBVO_{h-1}} \biggr| \dego} \nonumber \\
& = \Exp_{\r{h}} \sqpr{\dego f\cpr{ \TBOOne_{h-1}, \TBOneO_{h-1}}  \biggr| \dego}. \label{eqn:exchangeability}
\end{align}
\end{enumerate} 
\label{remark:exchageability}
\end{remark}

\begin{remark}
Suppose $\Law(\TB) = \eta$, where $\eta$ satisfies Assumption \ref{assm:generic-eta-def}.
\begin{enumerate}
    \item The random variables $\{\TBOV_0, v \in [i]\}$ are conditionally independent given $\NO = i \geq 1$.
    \item The random variables $\{\TBOV_0, v \in N_o(\TB)\}$ are conditionally independent given $\{\TBVO_0, v \in N_o(\TB)\}$. Also, for each $v \in N_o(\TB)$, the random variable $\TBOV_0$ is conditionally independent of $\{\TBUO_0, u \in N_o(\TB), u \neq v\}$ given $\TBVO_0$. However, note that the random variables $\TBOV_0$ and $\TBVO_0$ need not be independent since the edge mark on the directed edges $(o , v)$ and $(v , o)$ could be correlated.
\end{enumerate}
\label{remark:eta-indep}
\end{remark}

\subsection{Alternative form of the first term of $\calItilde_{\b, \eta}$}
\label{sec:alternate-form-1}
Throughout this section, we fix $\b > 0$ and  $\rho \in \PUB$. We start with a simple observation about the measure $\RP_1$ defined in \eqref{eqn:RP}.
\begin{lemma}
Let $\b > 0$ and $\rho \in \PUB$. If $\r{1} \ll \eta_1$, then $\SBRM{1} \ll \SBetabarM$ and  $H(\SBRM{1} \| \SBetabarM) < \infty$.
\label{lemma:pirllpieta}
\end{lemma}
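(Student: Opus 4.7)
The plan is to reduce the claim about $\SBRM{1}$ to an absolute-continuity statement about the full size-biased laws $\SBRinf_1$ and $\SBetabar$, then invoke finiteness of the state space for the entropy bound.

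First I would observe that because both $\rho$ and $\eta$ lie in $\PUB$, we have $\Exp_{\rho_1}[\dego]=\Exp_{\eta_1}[\dego]=\beta$. Consequently, by the definition \eqref{eqn:SBRinf-mu} applied to $\mu=\rho_1$ and $\mu=\eta_1$, the Radon–Nikodym derivatives
\[
\frac{d\SBRinf_1}{d\rho_1}(\tau)=\frac{\dego[\tau]}{\beta},\qquad \frac{d\SBetabar}{d\eta_1}(\tau)=\frac{\dego[\tau]}{\beta}
\]
carry the \emph{same} degree factor. If $f:=d\rho_1/d\eta_1$ (which exists by the hypothesis $\rho_1\ll\eta_1$), then for any measurable $A\subset\T{1}$,
\[
\SBRinf_1(A)=\int_A \frac{\dego[\tau]}{\beta}\, f(\tau)\, d\eta_1(\tau),\qquad \SBetabar(A)=\int_A \frac{\dego[\tau]}{\beta}\, d\eta_1(\tau).
\]
Hence if $\SBetabar(A)=0$, then $\dego[\tau]\,\indf_A(\tau)=0$ for $\eta_1$-a.e.\ $\tau$, which forces $\SBRinf_1(A)=0$. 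Therefore $\SBRinf_1\ll\SBetabar$; in fact, on the set $\{\dego\geq 1\}$ (which carries full mass under both measures) the density is simply $f$.

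The measure $\SBRM{1}$ is, by definition, the law of $((\bfX_1,\bfY_{(1,o)}),(\bfX_o,\bfY_{(o,1)}))$ under $\SBRinf_1$, and similarly $\SBetabarM$ is its law under $\SBetabar$. Since absolute continuity is preserved under push-forwards, $\SBRinf_1\ll\SBetabar$ yields $\SBRM{1}\ll\SBetabarM$.

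For the entropy bound, I would appeal to the finiteness of the state space $(\X\times\Y)^2$: for any pair of probability measures $p\ll q$ on a finite set $S$, one has
\[
H(p\|q)=\sum_{s\in S:\,p(s)>0}p(s)\log\!\frac{p(s)}{q(s)}\leq \log\!\frac{1}{\min\{q(s):q(s)>0\}}<\infty.
\]
Applying this to $p=\SBRM{1}$ and $q=\SBetabarM$ on $(\X\times\Y)^2$ gives $H(\SBRM{1}\|\SBetabarM)<\infty$. I do not expect any substantial obstacle here; the only subtlety is keeping track that the equality of the two means $\beta$—a consequence of unimodularity of both $\rho$ and $\eta$—is exactly what makes the degree factor cancel, so that the density $d\SBRinf_1/d\SBetabar$ inherits \emph{the same} bounds as $d\rho_1/d\eta_1$ on $\{\dego\geq 1\}$.
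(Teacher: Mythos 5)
Your proposal is correct and follows essentially the same route as the paper: the paper phrases the first step by noting that $\SBR{1}$ and $\SBeta{1}$ are equivalent to $\r{1}(\cdot\mid\dego\geq 1)$ and $\eta_1(\cdot\mid\dego\geq 1)$, respectively, whereas you write out the densities $\frac{\dego}{\beta}f$ and $\frac{\dego}{\beta}$ with respect to $\eta_1$ directly, but both arguments amount to the observation that size-biasing preserves absolute continuity, which then passes to the $(1,o)$-marginals. The finiteness of $H(\SBRM{1}\|\SBetabarM)$ via the finiteness of $(\X\times\Y)^2$ is exactly the paper's second step (and your side remark about the equality of the two means is not actually needed for either conclusion).
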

\begin{proof}
For the first assertion, it follows immediately from 
the  definition in \eqref{eqn:SBRinf} that  $\r{1} (\cdot \mid \NO \geq 1)$ and $\eta_1(\cdot \mid \NO \geq 1)$ are equivalent to $ \SBR{1}$ and  $\SBeta{1}$, respectively. Since $\r{1} \ll \eta_1$ by assumption, we obtain
\begin{align*}
\SBRM{1} \sim \r{1}^{1,o}(\cdot \mid \NO \geq 1) \ll \eta_1^{1,o}(\cdot \mid \NO \geq 1) \sim \SBetabarM,
\end{align*}
where `$\sim$' denotes the equivalence of measures.

The second assertion is an immediate consequence of the first assertion and the fact that $(\X \times \Y) \times (\X \times \Y)$ is a finite set.
\end{proof}

We now make the important observation that the measures  $\RP_1$ and $\RCP_1$ defined in Remark \ref{rmk:rh-rstarh} by \eqref{eqn:RP} and \eqref{eqn:RCP}, respectively,  are indeed probability measures and satisfy certain absolute continuity properties. The proofs are deferred to Appendix \ref{appendix:prob-measure-h=1}.
\begin{lemma} 
\label{lemma:prob-measures-h=1}
The measures $\RP_1$ and $\RCP_1$ are probability measures on $\T{1}$. Furthermore, $\r{1} \ll \RP_1$ and $\r{1} \ll \RCP_1$.
\end{lemma}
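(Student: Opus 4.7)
My plan is to verify normalization of $\MP$ and $\MCP$ by appropriate conditioning under $\eta_1$, and then to establish absolute continuity by pushing $\MP$- and $\MCP$-null sets onto size-biased marginals using the Ulam--Harris--Neveu exchangeability stated in Remark \ref{remark:exchageability}. Throughout, the assumption $\r{1}\ll\eta_1$ is used, which by Lemma \ref{lemma:pirllpieta} guarantees that $\SBMuRM\ll\SBetabarRM$ and, by a parallel argument, $\SBMuCRM(\cdot\mid x_o,y')\ll\SBetabarCRM(\cdot\mid x_o,y')$ almost surely, so the Radon--Nikodym densities in \eqref{eqn:RP}--\eqref{eqn:RCP} are well defined.

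First I would show $\int d\MP=1$ by integrating the density against $\eta_1$ and conditioning on $\NO$. By Assumption \ref{assm:generic-eta-def}, under $\eta_1$ vertex and edge marks are i.i.d.\ and independent of the tree; hence, conditional on $\NO=i\ge 1$ (with labels assigned by Section \ref{sec:labeling}), the pairs $\{(\bfX_v,\bfY_{(v,o)})\}_{v\in[i]}$ are i.i.d.\ with common law $\SBetabarRM$ (this marginal is the same under $\eta_1$ and under $\SBeta{1}$ because size-biasing alters only the degree distribution). Consequently
\[
\Exp_{\eta_1}\!\left[\prod_{v\in N_o(\TB)}\frac{d\SBMuRM}{d\SBetabarRM}(\bfX_v,\bfY_{(v,o)})\,\Bigg|\,\NO=i\right]=\left(\int\frac{d\SBMuRM}{d\SBetabarRM}\,d\SBetabarRM\right)^{\!i}=1,
\]
and the empty-product case $i=0$ also contributes $1$. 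For $\MCP$ the same scheme is applied after conditioning on the richer $\sigma$-algebra generated by $(\bfX_o,\{\bfY_{(o,v)}\}_{v\in N_o(\TB)},\NO)$; under $\eta_1$ the pairs $(\bfX_v,\bfY_{(v,o)})$ are conditionally independent across $v$ with conditional law $\SBetabarCRM(\cdot\mid\bfX_o,\bfY_{(o,v)})$ (this is where Remark \ref{remark:eta-indep}(2) is used), so each factor of the product integrates to $1$ by definition of Radon--Nikodym density, yielding $\int d\MCP=1$.

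For $\r{1}\ll\MP$, suppose $\MP(A)=0$. Since $\MP\ll\eta_1$ with density $\prod_v(d\SBMuRM/d\SBetabarRM)(x_v,y_{(v,o)})$, there exists an $\eta_1$-null set $N$ such that on $A\setminus N$ the product vanishes, i.e.\ $A\setminus N\subset\{\tau:\exists\,v\in N_o(\tau),\,(x_v,y_{(v,o)})\in B\}$ where $B:=\{z:(d\SBMuRM/d\SBetabarRM)(z)=0\}$ satisfies $\SBMuRM(B)=0$. Using the exchangeability identity \eqref{eqn:exchangeability} and the definition \eqref{eqn:SBRinf-mu} of the size-biasing,
\[
\r{1}\!\left(\exists\,v\in N_o(\TB):(\bfX_v,\bfY_{(v,o)})\in B\right)\le \Exp_{\r{1}}\!\left[\sum_{v\in N_o(\TB)}\indf\{(\bfX_v,\bfY_{(v,o)})\in B\}\right]=\beta\,\SBMuRM(B)=0.
\]
Combined with $\r{1}(N)=0$ (since $\r{1}\ll\eta_1$), this gives $\r{1}(A)=0$. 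The argument for $\r{1}\ll\MCP$ is identical, but with $B$ replaced by $D:=\{(x_o,y',x_1,y):(d\SBMuCRM/d\SBetabarCRM)(x_1,y\mid x_o,y')=0\}$ and $\SBMuM$ (whose marginal on the relevant coordinates is used to control $D$) in place of $\SBMuRM$; disintegrating $\SBMuM=\SBMuCRM\otimes\SBMuMZero$ immediately yields $\SBMuM(D)=0$, which closes the argument via the same exchangeability bound.

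The main obstacle is purely bookkeeping: pinning down precisely that the conditional law of $(\bfX_v,\bfY_{(v,o)})$ under $\eta_1$, given the root data and the outgoing edge marks, equals $\SBetabarCRM(\cdot\mid\bfX_o,\bfY_{(o,v)})$, and checking that size-biasing does not disturb this marginal. Once this identification is in hand, Fubini and the Radon--Nikodym cancellation do the rest.
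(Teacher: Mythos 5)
Your proposal is correct and follows essentially the same route as the paper: normalization of $\MP$ and $\MCP$ via the conditional independence structure of $\eta_1$ (Remark \ref{remark:eta-indep}) together with the tower property and the fact that Radon--Nikodym derivatives integrate to one, and absolute continuity by bounding the $\r{1}$-mass of trees with a ``bad'' child through the size-biased marginals. The only cosmetic differences are that the paper identifies the conditional normalization for $\MCP$ through the auxiliary measure $\PIMUT$ and an agreement-of-marginals argument rather than your direct identification of the conditional law with $\SBetabarCRM$, and it carries out the absolute-continuity step atom-by-atom using $E_1$ and the covariance measure $\pi_{\mu}$ instead of your null-set bound via \eqref{eqn:exchangeability}; these are the same underlying computations.
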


We now  express $\b H(\SBRM{1} \| \SBetabarM)$ as an expectation under $\r{1}$.
\begin{lemma}  Suppose that 
$\r{1} \ll \eta_1$. Then
\begin{align}
\b H(\SBRM{1} \| \SBetabarM) = \Exp_{\r{1}}\sqpr{ \sum_{v \in N_o(\TB)}  \log \cpr{\frac{d\SBRM{1}}{d\SBetabarM} \cpr{\TBOV_0, \TBVO_0}}}.
\label{eqn:secondterm-h=1}
\end{align}
\label{lemma:secondterm-h=1}
\end{lemma}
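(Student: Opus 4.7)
The plan is a short computation: unfold the definition of relative entropy, apply the size-biasing formula, then invoke the exchangeability identity \eqref{eqn:exchangeability} from Remark \ref{remark:exchageability}.

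First I would apply Lemma \ref{lemma:pirllpieta} to conclude that $\SBRM{1} \ll \SBetabarM$, so the Radon--Nikodym derivative $\Phi := d\SBRM{1}/d\SBetabarM$ is well-defined on the finite set $(\X\times\Y)^2$ and $\log \Phi$ is finite $\SBRM{1}$-a.s. Since $\SBRM{1}$ is, by definition, the pushforward of $\SBR{1}$ under the map $\TB \mapsto (\TBOOne_0, \TBOneO_0)$ (which is defined on $\{\dego \geq 1\}$, a set of full $\SBR{1}$-measure by the comment following \eqref{eqn:SBRinf-mu}), the definition of relative entropy gives
$$H(\SBRM{1}\|\SBetabarM) = \Exp_{\SBRM{1}}[\log \Phi] = \Exp_{\SBR{1}}\!\left[\log \Phi(\TBOOne_0, \TBOneO_0)\right].$$

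Next I would use the defining relation $d\SBR{1}/d\r{1}(\tau) = \text{deg}_\tau(o)/\b$ to rewrite this as an expectation under $\r{1}$, obtaining
$$\b \, H(\SBRM{1}\|\SBetabarM) = \Exp_{\r{1}}\!\left[\dego \cdot \log \Phi(\TBOOne_0, \TBOneO_0)\right].$$
The final step is to apply the exchangeability identity \eqref{eqn:exchangeability} with $h=1$ and $f = \log \Phi$; both sides vanish trivially on $\{\dego = 0\}$, so any convention for $\log\Phi$ off the support of $\SBetabarM$ is harmless. Taking unconditional expectations of the resulting conditional identity converts the displayed right-hand side into $\Exp_{\r{1}}\!\big[\sum_{v \in N_o(\TB)} \log \Phi(\TBOV_0, \TBVO_0)\big]$, which is precisely \eqref{eqn:secondterm-h=1}.

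There is essentially no obstacle: integrability is immediate because $(\X\times\Y)^2$ is finite and $\log\Phi$ is bounded above, while the a.s.\ marginal law of $(\TBOV_0, \TBVO_0)$ for any fixed neighbor $v$ coincides with $\SBRM{1}$ (on $\{\dego \geq 1\}$) by part (2) of Remark \ref{remark:exchageability}, so each summand is a.s.\ finite. The whole argument is purely algebraic once the size-biasing formula and the exchangeability identity are in hand.
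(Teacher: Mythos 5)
Your proposal is correct and follows essentially the same route as the paper's proof: establish $\SBRM{1} \ll \SBetabarM$ via Lemma \ref{lemma:pirllpieta}, rewrite the relative entropy as an expectation under $\SBR{1}$ of $\log\frac{d\SBRM{1}}{d\SBetabarM}(\TBOOne_0,\TBOneO_0)$, change measure to $\r{1}$ using \eqref{eqn:SBRinf}, and then apply the exchangeability identity \eqref{eqn:exchangeability} with $h=1$ after conditioning on $\dego$. Your integrability remarks play the same role as the paper's invocation of $H(\SBRM{1}\|\SBetabarM)<\infty$ before using exchangeability, so nothing is missing.
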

\begin{proof}
Since $\r{1} \ll \eta_1$, we have $\SBRM{1} \ll \SBetabarM$ by Lemma \ref{lemma:pirllpieta}. Therefore,  using  \eqref{eqn:SBRinf},  $\SBRinf(\NO \geq 1) = 1$ and the identity $\b = \Exp_{\rho}\sqpr{\NO}$,
\begin{align*}
\b H(\SBRM{1} \| \SBetabarM) & = \b \Exp_{\SBRM{1}}\sqpr{\log \cpr{\frac{d\SBRM{1}}{d\SBetabarM}}} \\
& = \b \Exp_{\SBR{1}}\sqpr{\log \cpr{\frac{d\SBRM{1}}{d\SBetabarM}\cpr{\TBOOne_0, \TBOneO_0}} \indf\{\NO \geq 1\} } \\
& = \b \Exp_{\r{1}} \sqpr{\frac{d\SBR{1}}{d\r{1}}\cpr{\TXY}\log \cpr{\frac{d\SBRM{1}}{d\SBetabarM}\cpr{\TBOOne_0, \TBOneO_0} } \indf\{\NO \geq 1\}}\\
&  =\Exp_{\r{1}} \sqpr{  \NO \log \cpr{\frac{d\SBRM{1}}{d\SBetaM}\cpr{\TBOOne_0, \TBOneO_0}} }. \numberthis \label{eqn:second-term-h=1-step1}
\end{align*} 
By Lemma \ref{lemma:pirllpieta},  we also have $H(\SBRM{1} \| \SBetabarM) < \infty$.  Therefore, conditioning on $\NO$ and using \eqref{eqn:exchangeability} with $h=1$ and $f(\cdot, \cdot) =  \log \cpr{\frac{d\SBRM{1}}{d\SBetaMI} (\cdot, \cdot)}$, we obtain
\begin{align*}
\b H(\SBRM{1} \| \SBetabarM) & = \Exp_{\r{1}} \sqpr{  \sum_{v \in N_o(\TB)} \Exp_{\r{1}}\sqpr{  \log \cpr{\frac{d\SBRM{1}}{d\SBetaMI}\cpr{\TBOV_0, \TBVO_0}} \biggr|\NO }   },
\end{align*}
which is equal to the right-hand side of  \eqref{eqn:secondterm-h=1}.
\end{proof}

We now establish a key result, which expresses
$H(\SBRM{1} \| \SBetabarM)$ in terms of  $\RP_1$ and $\RCP_1$.

\begin{proposition}  Suppose that  $\r{1} \ll \eta_1$. Then we have
\begin{align}
\b H(\SBRM{1} \| \SBetabarM) = \Exp_{\r{1}} \sqpr{\log\cpr{\frac{d\RP_1}{d\eta_1}(\TXY)  }} + \Exp_{\r{1}}\sqpr{\log\cpr{\frac{d\RCP_1}{d\eta_1}(\TXY)  }}.
\label{eqn:extra-terms-h=1}
\end{align}
\label{lemma:extra-terms-h=1}
\end{proposition}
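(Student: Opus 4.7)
The plan is to start from the expectation representation given in Lemma~\ref{lemma:secondterm-h=1},
\begin{align*}
\beta H(\SBRM{1} \| \SBetabarM) = \Exp_{\r{1}}\sqpr{ \sum_{v \in N_o(\TB)} \log\cpr{\frac{d\SBRM{1}}{d\SBetabarM}\cpr{\TBOV_0, \TBVO_0}}},
\end{align*}
and split the log-density into two additive pieces: one matching $d\RCP_1/d\eta_1$ and the other matching $d\RP_1/d\eta_1$ after a symmetrization. Concretely, the disintegration $\SBRM{1}(da, db) = \SBRCRMh{1}(da \mid b)\, \SBRMZero{1}(db)$, combined with the admissibility identity $\SBRMZero{1} = \SBRRMh{1}$ (which holds because $\rho \in \PUB$ forces $\r{1}$ to be admissible; cf.\ Remark~\ref{remark:admissibility}) and the analogous identities for $\SBetabarM$, yields the pointwise decomposition, valid for $\SBetabarM$-a.e.\ $(a, b)$,
\begin{align*}
\log\cpr{\frac{d\SBRM{1}}{d\SBetabarM}(a, b)} = \log\cpr{\frac{d\SBRCRMh{1}(\cdot \mid b)}{d\SBetabarCRM(\cdot \mid b)}(a)} + \log\cpr{\frac{d\SBRRMh{1}}{d\SBetabarRM}(b)}.
\end{align*}

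Substituting this identity into the expectation and splitting the sum, the first piece equals $\Exp_{\r{1}}\sqpr{\log\cpr{\frac{d\RCP_1}{d\eta_1}(\TB)}}$ directly from the definition \eqref{eqn:RCP}. For the second piece, the log-density is evaluated at $\TBVO_0$ rather than at the $\TBOV_0$ demanded by \eqref{eqn:RP}, so I would apply the mass transport principle to swap these arguments in expectation. Setting
\begin{align*}
f(\GB, u, w) := \indf\{w \in N_u(\GB)\} \log\cpr{\frac{d\SBRRMh{1}}{d\SBetabarRM}(\GB(w \setminus u)_0)}
\end{align*}
on $\Gdstar[\X; \Y]$, one observes that $\sum_v f(\TB, o, v)$ reproduces the sum with argument $\TBOV_0$, while $\sum_v f(\TB, v, o)$ reproduces the sum with argument $\TBVO_0$. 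Applying \eqref{eqn:unimod} to $f^+$ and $f^-$ separately then yields
\begin{align*}
\Exp_{\r{1}}\sqpr{\sum_{v \in N_o(\TB)} \log\cpr{\frac{d\SBRRMh{1}}{d\SBetabarRM}(\TBVO_0)}} = \Exp_{\r{1}}\sqpr{\sum_{v \in N_o(\TB)} \log\cpr{\frac{d\SBRRMh{1}}{d\SBetabarRM}(\TBOV_0)}},
\end{align*}
whose right-hand side equals $\Exp_{\r{1}}\sqpr{\log\cpr{\frac{d\RP_1}{d\eta_1}(\TB)}}$ by \eqref{eqn:RP}.

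The main delicate point is controlling signs and integrability in the mass transport step: $f$ is signed, so one must decompose $f = f^+ - f^-$ and verify that both $\Exp_\rho\sqpr{\sum_v f^\pm(\TB, o, v)}$ are finite before invoking \eqref{eqn:unimod}. This integrability follows from the chain-rule decomposition of $H(\SBRM{1} \| \SBetabarM)$ into a marginal and a conditional relative entropy, each of which is nonnegative, together with the finiteness of the joint relative entropy supplied by Lemma~\ref{lemma:pirllpieta} and the standing assumption $\Exp_\rho[\dego] = \beta < \infty$. Assembling the two pieces then completes the proof of \eqref{eqn:extra-terms-h=1}.
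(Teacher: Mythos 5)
Your proposal is correct and follows essentially the same route as the paper: it rewrites $\b H(\SBRM{1}\|\SBetabarM)$ via Lemma \ref{lemma:secondterm-h=1}, factors the joint density ratio into a conditional piece (giving $\RCP_1$ via \eqref{eqn:RCP}) and a marginal piece using admissibility, and then swaps the argument of the marginal piece from $\TBVO_0$ to $\TBOV_0$ by the mass transport principle applied to the positive and negative parts, with the same integrability control coming from the chain rule and Lemma \ref{lemma:pirllpieta}. This matches the paper's proof in all essential steps, differing only in the order in which the finiteness and swapping arguments are presented.
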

\begin{proof}
We first show that
\begin{align}
\Exp_{\r{1}}\sqpr{ \sum_{v \in N_o(\TB)} \log \cpr{\frac{d\SBRMOne{1}}{d\SBetabarMOne}\cpr{\TBVO_0}}} < \infty.
\label{eqn:extra-terms-h=1-step0-finiteness}
\end{align}
By Lemma \ref{lemma:pirllpieta}, we have  $H(\SBRM{1} \| \SBetabarM) < \infty$. Using the chain rule and the non-negativity of relative entropy, it follows that  $H(\SBRMZero{1} \| \SBetabarMZero) < \infty.$  Using the admissibility of $\r{1}$ and $\eta_1$ (which follow from the unimodularity of $\rho$ (resp. $\eta$); see Remark \ref{remark:admissibility}), and  \eqref{eqn:SBRinf}, we have
\begin{align*}
\b H(\SBRMZero{1} \| \SBetabarMZero) & = \b \Exp_{\SBRMZero{1}}\sqpr{ \log \cpr{\frac{d\SBRMZero{1}}{d\SBetabarMZero}  }} \\
& = \b \Exp_{\SBRM{1}}\sqpr{ \log \cpr{\frac{d\SBRMOne{1}}{d\SBetabarMOne}\cpr{\TBOneO_0} }} \\
& = \Exp_{\r{1}}\sqpr{ \NO  \log \cpr{\frac{d\SBRMOne{1}}{d\SBetabarMOne}\cpr{\TBOneO_0} }}.
\end{align*}
Since $H(\SBRMZero{1} \| \SBetabarMZero) < \infty$ and $\NO \geq 0$, the above shows that for $a \in \{+, -\}$,
\begin{align}
\Exp_{\r{1}}\sqpr{ \NO  \cpr{\log \cpr{\frac{d\SBRMOne{1}}{d\SBetabarMOne}\cpr{\TBOneO_0} }}^a} & < \infty. \quad \label{eqn:extra-terms-h=1-finiteness-0}
\end{align}
Using \eqref{eqn:exchangeability} with $h=1$ and $f(\tau, \cdot) = \cpr{ \log \cpr{\frac{d\SBRMOne{1}}{d\SBetabarMOne}(\cdot) }}^a$ for all $\tau \in \Y \times \T{0}$, one obtains \eqref{eqn:extra-terms-h=1-step0-finiteness}.

Next, we show that 
\begin{align}
\Exp_{\r{1}}\sqpr{ \sum_{v \in N_o(\TB)} \log \cpr{\frac{d\SBRMOne{1}}{d\SBetabarMOne}\cpr{\TBVO_0}}}  = \Exp_{\r{1}}\sqpr{ \sum_{v \in N_o(\TB)} \log \cpr{\frac{d\SBRMOne{1}}{d\SBetabarMOne}\cpr{\TBOV_0}}}.
\label{eqn:h=1-proof-exchange-unimod}
\end{align}
Recall that $\rho$ is unimodular. For $a \in \{+, -\}$, define the function $f^a: \Tdstar[\X; \Y] \to \R_+$ by
\begin{align*}
f^a(\tau, o_1, o_2) := \indf\{o_1 \in N_{o_2}(\tau)\} \cpr{\log \cpr{\frac{d\SBRMOne{1}}{d\SBetabarMOne} \cpr{\tau(o_1 \setminus o_2)_0}}}^a,
\end{align*}
for $(\tau, o_1, o_2) \in \Tdstar[\X ; \Y]$. Here, $\Tdstar[\X; \Y] \subset \Gdstar[\X; \Y]$ denotes the set of marked doubly rooted trees equipped with the natural local topology. Applying \eqref{eqn:unimod} with $f = f^a$, we obtain
\begin{align*}
\Exp_{\r{1}}\sqpr{ \sum_{v \in N_o(\TB)} \cpr{\log \cpr{\frac{d\SBRMOne{1}}{d\SBetabarMOne}\cpr{\TBVO_0}}}^a}   = \Exp_{\r{1}}\sqpr{ \sum_{v \in N_o(\TB)} \cpr{\log \cpr{\frac{d\SBRMOne{1}}{d\SBetabarMOne}\cpr{\TBOV_0}}}^a}.
\end{align*}
Combining the last display for $a \in \{+, -\}$ with \eqref{eqn:extra-terms-h=1-step0-finiteness}, we conclude that  \eqref{eqn:h=1-proof-exchange-unimod} holds.

Next, by the admissibility of $\r{1}$ and $\eta_1$, for any $\tau, \tau' \in \X \times \Y$, we have
\begin{align*}
\frac{d\SBRM{1}}{d\SBetabarM}(\tau, \tau^\prime) & = \frac{d\SBRMZero{1}}{d\SBetabarMZero}(\tau') \times  \frac{d\SBRMOneGivenZero{1}}{d\SBetabarMOneGivenZero}(\tau | \tau') \\
& = \frac{d\SBRMOne{1}}{d\SBetabarMOne}(\tau') \times  \frac{d\SBRMOneGivenZero{1}}{d\SBetabarMOneGivenZero}(\tau | \tau').
\end{align*}
Together with Lemma \ref{lemma:secondterm-h=1} (which is applicable due to our assumption $\r{1} \ll \eta_1$),  \eqref{eqn:extra-terms-h=1-step0-finiteness}, the fact that $H(\SBRM{1} \| \SBetabarM) < \infty$ and \eqref{eqn:h=1-proof-exchange-unimod}, this implies that 
\begin{align*}
\b H(\SBRM{1} \| \SBetabarM) & = \Exp_{\r{1}}\sqpr{ \sum_{v \in N_o(\TB)} \log \cpr{\frac{d\SBRMOne{1}}{d\SBetabarMOne}\cpr{\TBVO_0}}} + \Exp_{\r{1}} \sqpr{ \sum_{v \in N_o(\TB)} \log \cpr{ \frac{d\SBRMOneGivenZero{1}}{d\SBetabarMOneGivenZero}(\TBOV_0 | \TBVO_0)}} \\
& =  \Exp_{\r{1}}\sqpr{ \sum_{v \in N_o(\TB)} \log \cpr{\frac{d\SBRMOne{1}}{d\SBetabarMOne}\cpr{\TBOV_0}}} + \Exp_{\r{1}} \sqpr{ \sum_{v \in N_o(\TB)} \log \cpr{ \frac{d\SBRMOneGivenZero{1}}{d\SBetabarMOneGivenZero}(\TBOV_0 | \TBVO_0)}}. \numberthis 
\label{eqn:extra-terms-h=1-step5}
\end{align*}
Also, the definition of $\RP_1$ in \eqref{eqn:RP} implies 
\begin{align}
\Exp_{\r{1}} \sqpr{\log\cpr{\frac{d\RP_1}{d\eta_1}(\TXY)  }} = \Exp_{\r{1}} \sqpr{\sum_{v \in N_o(\TB)}\log\cpr{\frac{d\SBRMOne{1}}{d\SBetabarMOne}(\TBOV_0)  }}.\label{eqn:extra-terms-h=1-step0}
\end{align}
Similarly, the definition of $\RCP_1$ in \eqref{eqn:RCP} implies 
\begin{align}
\Exp_{\r{1}}\sqpr{\log\cpr{\frac{d\RCP_1}{d\eta_1}(\TXY)  }} = \Exp_{\r{1}} \sqpr{ \sum_{v \in N_o(\TB)} \log \cpr{\frac{d\SBRMOneGivenZero{1}}{d\SBetabarMOneGivenZero}(\TBOV_0 | \TBVO_0)}}. \label{eqn:extra-terms-h=1-step2}
\end{align}
Thus, \eqref{eqn:extra-terms-h=1} follows from \eqref{eqn:extra-terms-h=1-step5}, \eqref{eqn:extra-terms-h=1-step0} and \eqref{eqn:extra-terms-h=1-step2}.
\end{proof}

Combining the previous  results, we now obtain the main result of this subsection.

\begin{proposition}[Alternative form of the first term of $\calItilde_{\b, \eta}$]
Suppose that $\r{1} \ll \eta_1$.   Then
\begin{align}
H&(\r{1} \| \eta_1) - \frac{\b}{2} H(\SBRM{1} \| \SBetabarM) =  \frac{1}{2}\sqpr{H(\r{1} \| \RP_1) + H(\r{1} \| \RCP_1)}.
\label{eqn:alternate-form-1}
\end{align}
\label{prop:alternate-form-1}
\end{proposition}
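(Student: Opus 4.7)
The plan is to reduce the identity to Proposition \ref{lemma:extra-terms-h=1} via a single application of the chain rule for Radon-Nikodym derivatives. First I would observe that the definition \eqref{eqn:RP} of $\RP_1$ yields $\RP_1 \ll \eta_1$, and by Lemma \ref{lemma:prob-measures-h=1} we also have $\r{1} \ll \RP_1$. The chain rule then gives
\begin{align*}
\log \frac{d\r{1}}{d\eta_1}(\tau) = \log \frac{d\r{1}}{d\RP_1}(\tau) + \log \frac{d\RP_1}{d\eta_1}(\tau), \quad \r{1}\text{-a.e.\ } \tau \in \T{1},
\end{align*}
and integrating against $\r{1}$ produces
\begin{align*}
H(\r{1} \| \eta_1) = H(\r{1} \| \RP_1) + \Exp_{\r{1}}\left[\log \frac{d\RP_1}{d\eta_1}(\TXY)\right].
\end{align*}
The same reasoning applied to $\RCP_1$ (whose absolute continuity properties are supplied by Lemma \ref{lemma:prob-measures-h=1} and the definition \eqref{eqn:RCP}) yields the analogous identity with $\RCP_1$ in place of $\RP_1$.

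Averaging the two identities and invoking Proposition \ref{lemma:extra-terms-h=1} to replace the sum of the two expectations by $\b H(\SBRM{1}\|\SBetabarM)$, I would obtain
\begin{align*}
H(\r{1} \| \eta_1) = \frac{1}{2}\left[H(\r{1} \| \RP_1) + H(\r{1} \| \RCP_1)\right] + \frac{\b}{2} H(\SBRM{1} \| \SBetabarM),
\end{align*}
which rearranges to \eqref{eqn:alternate-form-1}.

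The one technical point requiring care is verifying that the expectations $\Exp_{\r{1}}[\log(d\RP_1/d\eta_1)(\TXY)]$ and $\Exp_{\r{1}}[\log(d\RCP_1/d\eta_1)(\TXY)]$ are unambiguously defined, so that the chain rule manipulation does not produce an $\infty - \infty$ indeterminacy. This finiteness is precisely what is established in the course of proving Proposition \ref{lemma:extra-terms-h=1} (see \eqref{eqn:extra-terms-h=1-step0-finiteness} and its conditional counterpart via \eqref{eqn:extra-terms-h=1-step0} and \eqref{eqn:extra-terms-h=1-step2}), and ultimately rests on $H(\SBRM{1}\|\SBetabarM) < \infty$ from Lemma \ref{lemma:pirllpieta}. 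Granted this finiteness, both sides of the chain rule decomposition take values in $[0,\infty]$, and the identity \eqref{eqn:alternate-form-1} holds as an equation in $[0,\infty]$; in particular, when $H(\r{1}\|\eta_1) = \infty$ both sides equal $+\infty$ and the conclusion still holds.
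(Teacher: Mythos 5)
Your proposal is correct and follows essentially the same route as the paper: the chain-rule decomposition of $H(\r{1}\|\eta_1)$ with respect to $\RP_1$ and $\RCP_1$ (using the absolute continuity from Lemma \ref{lemma:prob-measures-h=1}), followed by summing the two identities and invoking Proposition \ref{lemma:extra-terms-h=1} to identify the sum of the two expectations with $\b H(\SBRM{1}\|\SBetabarM)$. Your extra remark on avoiding an $\infty-\infty$ indeterminacy is consistent with the finiteness facts the paper establishes via Lemma \ref{lemma:pirllpieta} and \eqref{eqn:extra-terms-h=1-step0-finiteness}.
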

\begin{proof}
Since $\r{1} \ll \eta_1$, we have $H(\SBRM{1} \| \SBetabarM) < \infty$ by Lemma \ref{lemma:pirllpieta}. 
Thus, by \eqref{eqn:RP} and the second assertion of Lemma \ref{lemma:prob-measures-h=1}, we have  $\RP_1 \ll \eta_1$ and   $\r{1} \ll \RP_1$, respectively. Hence,
$
\frac{d\r{1}}{d\eta_1} = \frac{d\r{1}}{d\RP_1} \times \frac{d\RP_1}{d\eta_1}.
$
This implies
\begin{align*}
H(\r{1} \| \eta_1)  & = \Exp_{\r{1}} \sqpr{\log \cpr{\frac{d\r{1}}{d\RP_1}}}  + \Exp_{\r{1}} \sqpr{\log \cpr{\frac{d\RP_1}{d\eta_1}}} =  H(\r{1} \| \RP_1) + \Exp_{\r{1}} \sqpr{\log \cpr{\frac{d\RP_1}{d\eta_1}}}. \numberthis \label{eqn:h=1-thm-term1}
\end{align*}
Similarly, by \eqref{eqn:RCP} and the second assertion of Lemma \ref{lemma:prob-measures-h=1}, we have $\RCP_1 \ll \eta_1$ and  $\r{1} \ll \RCP_1$.  Hence,
$
\frac{d\r{1}}{d\eta_1} = \frac{d\r{1}}{d\RCP_1} \times \frac{d\RCP_1}{d\eta_1},
$
which implies
\begin{align*}
H(\r{1} \| \eta_1)  & = 
H(\r{1} \| \RCP_1) + \Exp_{\r{1}} \sqpr{\log \cpr{\frac{d\RCP_1}{d\eta_1}}}. \numberthis \label{eqn:h=1-thm-term2}
\end{align*}
Summing \eqref{eqn:h=1-thm-term1} and  \eqref{eqn:h=1-thm-term2}, and using \eqref{eqn:extra-terms-h=1}, we obtain \eqref{eqn:alternate-form-1}.
\end{proof}

\subsection{Alternative form of the remaining terms of $\calItilde_{\b, \eta}$}
\label{sec:alternate-form-h}
We start with two preliminary lemmas. Lemma \ref{lemma:finiteness} shows that the rate function $\calItilde_{\b, \eta}$ in \eqref{eqn:calItilde-generic} is well defined, and Lemma \ref{lemma:preliminary-prob-prop} shows that $\RCP_h$ is indeed a probability measure and establishes various relations between $\r{h}$, $\SBR{h}$, $\Rstar{h}$ and $\RCP_h$. Their proofs are involved and technical, and hence
deferred to Appendices  \ref{appendix:lemma-finiteness} and \ref{appendix:lemma-Rstarabscont}, respectively.

\begin{lemma}[Finiteness]
Let $\b > 0$ and let $\rho \in \PUB$. If  $\EDLDRinf < \infty$, then the following properties hold:
\begin{enumerate}[label={(P\arabic*)}]
\item $H(\r{h}) < \infty$ for all $h \in \N$, \label{item:HRH}
\item $H(\r{h} \| \Rstar{h}) < \infty$ for all $h \geq 2$, \label{item:HRHD}
\item $H(\SBRM{h}) < \infty$ for all $h \in \N$, \label{item:HPIRH}
\item $H(\SBRM{h} \| \SBRstarM{h}) < \infty$ for all $h \geq 2$. \label{item:HPIRHD}
\end{enumerate}
\label{lemma:finiteness}
\end{lemma}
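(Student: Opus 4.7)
The plan is to establish (P1) and (P3) by induction on $h$ using unimodularity and the mass transport principle, then deduce (P2) and (P4) from these via the product representation \eqref{eqn:Rstarh} of $\Rstar{h}$ together with the identity $H(\mu\|\nu)=-H(\mu)-\Exp_\mu[\log\nu(\TB)]$.

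The key combinatorial input is that the number of isomorphism classes of rooted marked trees in $\T{1}$ with root degree $k$ is bounded by $|\X|\binom{k+|\X||\Y|^2-1}{|\X||\Y|^2-1}$: the root contributes an $|\X|$-choice, and each child, together with its edge-mark pair, is labeled by an element of a fixed set of size $|\X||\Y|^2$, but children are unordered. Taking logarithms, the per-vertex entropy contribution is $O(\log(1+k))$, and this \emph{logarithmic} (rather than polynomial) growth is precisely what makes $\EDLDRinf<\infty$ the right sufficient moment.

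For (P1), first observe that $H(\rho_{o,\text{deg}})<\infty$ because a probability measure on $\NZ$ with mean $\beta$ has entropy at most that of a geometric law with the same mean. The base case $h=1$ follows from the chain rule
\begin{align*}
H(\r{1}) = H(\rho_{o,\text{deg}}) + H\bigl(\r{1}\mid \dego\bigr),
\end{align*}
since conditional on $\dego=k$ the tree lies in a set whose size is bounded as above, making the second term at most $C\,\Exp_\rho[\log(1+\dego)]<\infty$. For the inductive step, write $H(\r{h})=H(\r{h-1})+H(\r{h}\mid \r{h-1})$; the conditional entropy is dominated by the expected logarithm of the number of depth-$1$ marked-star extensions summed over vertices at depth $h-1$, each contributing $O(\log(1+d_v))$. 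Iterating the mass transport principle \eqref{eqn:unimod} to transfer each descendant-sum to a size-biased expectation at the root via \eqref{eqn:SBRinf} reduces the bound to linear combinations of quantities of the form $\Exp_\rho[\dego\log(1+\dego)\,F(\TB_{h-2})]$ with $F$ bounded, which are finite by $\EDLDRinf<\infty$. Property (P3) then follows because marginalization reduces entropy, so $H(\SBRM{h})\le H(\SBR{h})$, and
\begin{align*}
H(\SBR{h}) = \log\beta - \tfrac{1}{\beta}\Exp_{\r{h}}\bigl[\dego\log\dego\bigr] - \tfrac{1}{\beta}\Exp_{\r{h}}\bigl[\dego\log\r{h}(\TB)\bigr],
\end{align*}
whose three terms are all finite: the first two by $\EDLDRinf<\infty$, and the third --- a size-biased analogue of $-H(\r{h})$ --- by a weighted version of the same induction, using identical combinatorial bounds.

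For (P2), substituting \eqref{eqn:Rstarh} yields
\begin{align*}
H(\r{h}\|\Rstar{h}) = H(\r{h-1}) - H(\r{h}) + \Exp_{\r{h}}\Biggl[\sum_{v\in N_o(\TB)} -\log\PU{h-1}\bigl(\TBOV_{h-2},\TBVO_{h-2}\bigr)\bigl(\TBOV_{h-1}\bigr)\Biggr].
\end{align*}
The first two terms are finite by (P1) at levels $h-1$ and $h$, and the nonnegative neighbor-sum is handled by mass transport: after swapping the roles of $o$ and $v$ and invoking the admissibility of $\r{h}$ (which holds by Remark \ref{remark:admissibility}) it becomes a size-biased expectation of a conditional log-density, whose finiteness is controlled by the quantities already bounded in (P3) combined with Lemma \ref{lemma:preliminary-prob-prop}. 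Property (P4) follows by the identical template with $\r{h}$ replaced by $\SBRM{h}$ and $\Rstar{h}$ replaced by $\SBRstarM{h}$.

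The main technical obstacle will be organizing the iterated mass-transport reductions cleanly at arbitrary depth $h$: a naive per-vertex bound at deep levels would produce expectations like $\Exp_\rho[\dego^{h-1}]$, which are not assumed finite. Carrying the logarithmic combinatorial bound through each step via size-biasing --- so that every descendant-sum terminates as a combination of $\Exp_\rho[\dego\log(1+\dego)\cdot \text{bounded}]$ expressions --- is what keeps the argument strictly within the single-moment budget $\EDLDRinf<\infty$.
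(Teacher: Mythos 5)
Your route to (P3) contains an incorrect step: the inequality $H(\SBRM{h})\le H(\SBR{h})$ does not follow from ``marginalization reduces entropy,'' because $(\TBOOne_{h-1},\TBOneO_{h-1})$ is not a deterministic function of the unlabeled tree $\TB$ — it depends on the uniformly random Ulam--Harris--Neveu assignment of the label $1$ to a child of the root. Under such a randomized map entropy can strictly increase: if $\SBR{h}$ is a point mass on a tree whose root has $k$ pairwise non-isomorphic child subtrees, then $H(\SBR{h})=0$ while $H(\SBRM{h})=\log k$. The step is repairable (e.g. $H(\SBRM{h})\le H(\SBR{h})+\Exp_{\SBR{h}}[\log\dego]$, and $\Exp_{\SBR{h}}[\log\dego]=\b^{-1}\EDLDR{h}<\infty$), but as written it is false.

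The more serious issue is that the two estimates on which your whole scheme rests are asserted rather than proved: (i) the inductive step giving $H(\r{h})<\infty$ for all $h$ using only the single moment $\EDLDRinf<\infty$, and (ii) the finiteness of the size-biased entropy term $-\frac1\b\Exp_{\r{h}}\sqpr{\dego\log\r{h}(\TB)}$, which you dismiss as ``a weighted version of the same induction.'' These are exactly the nontrivial inputs: the paper does not reprove them but imports (P1) from \cite[Lemma 4]{DelAna19}, the key bound for (P2) from \cite[(114), p.~76]{DelAna19}, and (ii) from \cite[Lemma 5.4]{BorCap15}; its own work is confined to (P3)--(P4), where it lower-bounds $\PIRstar{h}$ through the explicit formula \eqref{eqn:PU} for $\PU{h-1}$ and admissibility, reducing everything to (ii) at depth $h-1$. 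In your sketch, the phrase ``iterating the mass transport principle \ldots reduces the bound to linear combinations of $\Exp_\rho[\dego\log(1+\dego)\,F(\TB_{h-2})]$ with $F$ bounded'' is precisely where the difficulty you yourself flag lives: a sum of $\log(1+d_v)$ over the unboundedly many vertices at depth $h-1$ does not transport to a single size-biased root expectation by one application of \eqref{eqn:unimod} (a naive swap produces $\Exp_\rho[Z_{h-1}\log(1+\dego)]$ with $Z_{h-1}$ the generation size), and no mechanism for the iteration is given. Your decompositions for (P2) and (P4) are correct and close in spirit to the paper's treatment of (P3)--(P4), but until (i) and (ii) are actually carried out or cited, the proposal is a plausible plan rather than a proof.
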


\begin{lemma}
\label{lemma:preliminary-prob-prop}
Let $\b > 0$ and let $\rho \in \PUB$. The following properties hold for $h \geq 2$:
\begin{enumerate}[label={(P\arabic*)}]
\item $\r{h} \ll \Rstar{h}$ and $\SBRM{h} \ll \SBRstarM{h}$. \label{lemma:Rstarabscont}
\item  $\SBRMOne{h} = \SBRstarMOne{h}$. \label{lemma:PIRRstarMarginal}
\item $\RCP_h$ is a probability measure.  \label{lemma:RCPh-prob-measure}
\item $\r{h} \ll \RCP_h$. \label{lemma:RCPh-abs-cont}
\end{enumerate}
\end{lemma}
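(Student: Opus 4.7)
I would establish the four properties in the order (P1), (P2), (P4), (P3), since (P3) is the subtlest and benefits from having the others in hand. Throughout, I would exploit that $\X$ and $\Y$ are finite, so that $\T{h}$ is countable and all measures at hand are purely atomic, and that $\r{h}$ is admissible for every $h \in \N$ (which follows from the unimodularity of $\rho$ via Remark \ref{remark:admissibility}). For (P1), the claim $\r{h} \ll \Rstar{h}$ reduces to an atomwise positivity check: if $\r{h}(\tau) > 0$ for some $\tau \in \T{h}$, then $\r{h-1}(\tau_{h-1}) > 0$, and for each $v \in N_o(\tau)$ the factor $\PU{h-1}(\tau(v\setminus o)_{h-2}, \tau(o\setminus v)_{h-2})(\tau(v\setminus o)_{h-1})$ appearing in \eqref{eqn:Rstarh} is strictly positive (being a conditional probability under $\r{h-1}$ of an event of positive unconditional probability), whence $\Rstar{h}(\tau) > 0$. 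The assertion $\SBRM{h} \ll \SBRstarM{h}$ then follows because $\SBR{h}$ and $\SBRstar{h}$ share the common size-bias density $\dego/\b$ with respect to $\r{h}$ and $\Rstar{h}$, and the pushforward under $\TB \mapsto (\TBOOne_{h-1}, \TBOneO_{h-1})$ preserves absolute continuity.

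For (P2), \eqref{eqn:UGW-marginal} gives $(\Rstar{h})_{h-1} = \r{h-1}$; since $\dego$ depends only on the depth-$1$ truncation, size-biasing preserves this, so $(\SBRstar{h})_{h-1} = \SBR{h-1} = (\SBR{h})_{h-1}$. For any bounded measurable $g$, the tower property, the defining relation \eqref{eqn:def-PU} of $\PU{h-1}$, and the admissibility-driven symmetry between $\TBOOne$ and $\TBOneO$ together yield
\[
\Exp_{\SBR{h}}\sqpr{g(\TBOOne_{h-1})} = \Exp_{\SBR{h-1}}\sqpr{\int g\, d\PU{h-1}(\TBOOne_{h-2}, \TBOneO_{h-2})};
\]
the same identity holds for $\SBRstar{h}$ by the $\Rstar{h}$ construction \eqref{eqn:Rstarh}, and since the outer measures coincide, so do the marginals of $\TBOOne_{h-1}$. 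Given (P1) and (P3), property (P4) then follows from the observation that the ratio $d\RCP_h/d\Rstar{h}$ is a product of factors $\tfrac{d\SBRM{h}}{d\SBRstarM{h}}(\tau(v\setminus o)_{h-1}, \tau(o\setminus v)_{h-1})$, each of which is strictly positive on the support of $\r{h}$: by admissibility, whenever $\r{h}(\tau) > 0$, the pair $(\tau(v\setminus o)_{h-1}, \tau(o\setminus v)_{h-1})$ lies in the support of $\SBRM{h}$, so positivity of the Radon-Nikodym derivative there follows from $\SBRM{h} \ll \SBRstarM{h}$.

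The main obstacle is (P3), namely showing
\[
\Exp_{\Rstar{h}}\sqpr{\prod_{v \in N_o(\TB)} \frac{d\SBRM{h}}{d\SBRstarM{h}}(\TBOV_{h-1}, \TBVO_{h-1})} = 1.
\]
My plan is to first condition on the depth-$(h-1)$ truncation (distributed as $\r{h-1}$ under $\Rstar{h}$); by the $\Rstar{h}$ construction \eqref{eqn:Rstarh}, the extensions $\{\TBOV_{h-1}\}_{v \in N_o}$ are conditionally independent with marginals $\PU{h-1}(\TBOV_{h-2}, \TBVO_{h-2})$, so the expectation factors into an outer expectation of a product of per-vertex inner integrals. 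Using (P2) and the chain rule for Radon-Nikodym derivatives, the density $\tfrac{d\SBRM{h}}{d\SBRstarM{h}}(\tau, \tau')$ reduces to the conditional density $\tfrac{d\SBRCRMh{h}(\cdot|\tau')}{d\SBRstarCRM{h}(\cdot|\tau')}(\tau)$. The decisive step is to identify each per-vertex inner integral with $1$, which I would do by combining the defining relation of $\PU{h-1}$ as a conditional law under $\r{h-1}$ with the mass transport principle for the unimodular measure $\rho$, used to reassign the integration variable between the root and a generic neighbor. This is the point at which the unimodularity of $\rho$ enters the argument most essentially.
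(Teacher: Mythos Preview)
Your overall plan follows the paper's structure, and your treatment of (P4) is essentially identical to the paper's. However, there are genuine gaps in (P1), (P2), and (P3).

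\textbf{(P1).} Your claim that each factor $\PU{h-1}(\tau(v\setminus o)_{h-2}, \tau(o\setminus v)_{h-2})(\tau(v\setminus o)_{h-1})$ is ``a conditional probability under $\r{h-1}$ of an event of positive unconditional probability'' is precisely the nontrivial point. The unconditional event in question is $\{\TBOneO_{h-1} = \tau(v\setminus o)_{h-1},\ \TBOOne_{h-2} = \tau(o\setminus v)_{h-2}\}$ under $\r{h-1}$, which amounts to $\r{h-1}(\tau(v\setminus o)_{h-1}\oplus\tau(o\setminus v)_{h-2})>0$. From $\r{h}(\tau)>0$ you only know that this tree appears \emph{as a subtree rooted at a neighbor of the root}; to deduce it appears \emph{rooted at the root} you must move the root from $o$ to $v$, and that is exactly where unimodularity enters. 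The paper makes this explicit by applying the mass transport principle to the indicator $f(\TBdef,o_1,o_2)=\indf\{o_1\in N_{o_2}(\TBdef),\ \TBdef(o_2\setminus o_1)_{h-1}=\tau^1,\ \TBdef(o_1\setminus o_2)_{h-2}=\tau^o\}$. Your proposal invokes admissibility in the preamble but never connects it to this positivity claim.

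\textbf{(P2).} Your displayed identity $\Exp_{\SBR{h}}[g(\TBOOne_{h-1})] = \Exp_{\SBR{h-1}}\bigl[\int g\, d\PU{h-1}(\TBOOne_{h-2}, \TBOneO_{h-2})\bigr]$ asserts that, under $\SBR{h}$, the conditional law of $\TBOOne_{h-1}$ given $(\TBOOne_{h-2},\TBOneO_{h-2})$ is $\PU{h-1}$. This is not immediate from the tower property and admissibility: $\PU{h-1}$ is defined as a conditional law under $\r{h-1}$, and the pass from $\SBR{h}$ to $\r{h-1}$ requires more than you have written. The paper avoids this by a direct route: by admissibility of both $\r{h}$ and $\Rstar{h}$, the first marginal of $\SBRM{h}$ (resp.\ $\SBRstarM{h}$) equals its second marginal; and the second marginal, being the law of $\TBOneO_{h-1}$, depends on $\TB$ only through $\TB_{h-1}$, whence both collapse to the same expectation under $\SBR{h-1}$. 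Your $\PU{h-1}$ detour is both unnecessary and not justified as stated.

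\textbf{(P3).} You correctly condition on $\TB_{h-1}$, factor using the conditional independence built into $\Rstar{h}$, and reduce the joint density to a conditional density via (P2). But your final step --- showing each per-vertex integral equals $1$ by ``the mass transport principle \ldots\ used to reassign the integration variable between the root and a generic neighbor'' --- is not a proof. The paper's argument here does \emph{not} invoke mass transport directly. Instead, it identifies the integrating measure $\PU{h-1}(\tau,\tau')$ with the conditional law of the first coordinate under $\SBRstarM{h}$ given $\{\sigma_{h-2}=\tau,\ \sigma'=\tau''\}$ (where $\tau''=\TBVO_{h-1}$), using the explicit formula \eqref{eqn:PU}. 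The integral then becomes $\Exp_{\PIRstar{h}}\bigl[\tfrac{d\PIR{h}}{d\PIRstar{h}}\,\big|\,\sigma_{h-2}=\tau,\ \sigma'=\tau''\bigr]$, and this equals $1$ because $\PIR{h}$ and $\PIRstar{h}$ agree on every such section (a consequence of (P2) together with $\PIR{h}(\tau,\tau')=\PIRstar{h}(\tau,\tau')$ whenever $\tau\in\T{h-2}\times\Y$). The missing idea in your proposal is this identification of $\PU{h-1}$ with a conditional law of $\SBRstarM{h}$, which is what turns the per-vertex integral into a conditional expectation of a Radon--Nikodym derivative.
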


For the rest of this section we fix $\b > 0$,  $\rho \in \PUB$ and $h \geq 2$.
We now express $H(\SBRM{h} \| \SBRstarM{h})$ in the definition of $\calItilde_{\b, \eta}$ as an expectation with respect to  $\r{h}$. 

\begin{lemma}
If  $\EDLDRinf < \infty$, then it follows that
\begin{align}
\b H(\SBRM{h} \| \SBRstarM{h}) = \Exp_{\r{h}} & \sqpr{\sum_{v \in N_o(\TB)} \log \cpr{\frac{d\SBRM{h}}{d\SBRstarM{h}}(\TBOV_{h-1}, \TBVO_{h-1}}}. \label{eqn:PIRPIRstar-simplify}
\end{align}
\label{lemma:PIRPIRstar-simplify}
\end{lemma}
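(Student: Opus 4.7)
The statement is a direct analog of Lemma \ref{lemma:secondterm-h=1} at general depth $h \geq 2$, and the plan is to mimic that proof, with the two ingredients guaranteeing validity being: (i) the absolute continuity $\SBRM{h} \ll \SBRstarM{h}$ from Lemma \ref{lemma:preliminary-prob-prop}\ref{lemma:Rstarabscont}, which lets us write the relative entropy as an honest integral of a log-density; and (ii) the finiteness $H(\SBRM{h} \| \SBRstarM{h}) < \infty$ from Lemma \ref{lemma:finiteness}\ref{item:HPIRHD}, which justifies the manipulations below.

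First, I would unfold the definition of relative entropy and rewrite the expectation under $\SBRM{h}$ as one under $\SBR{h}$ by recalling that $\SBRM{h}$ is the law of $(\TBOOne_{h-1}, \TBOneO_{h-1})$ under $\SBR{h}$, using the fact that $\SBRinf(\NO \ge 1) = 1$. Then I would change measure from $\SBR{h}$ to $\r{h}$ via the Radon--Nikodym derivative \eqref{eqn:SBRinf}, namely $\frac{d\SBR{h}}{d\r{h}}(\TB) = \NO/\b$, obtaining
\begin{equation*}
\b\, H(\SBRM{h} \| \SBRstarM{h}) = \Exp_{\r{h}}\sqpr{ \NO \log\cpr{\frac{d\SBRM{h}}{d\SBRstarM{h}}\cpr{\TBOOne_{h-1}, \TBOneO_{h-1}}}}.
\end{equation*}
This is the direct analog of the computation in \eqref{eqn:second-term-h=1-step1}.

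Next, I would invoke the exchangeability identity \eqref{eqn:exchangeability} from Remark \ref{remark:exchageability}, applied with the function $f$ on $\T{h-1} \times (\X \times \Y)$ (or rather on the appropriate product space from Definition \ref{def:admissible-h}) defined by $f(\tau, \tau') = \log \tfrac{d\SBRM{h}}{d\SBRstarM{h}}(\tau, \tau')$. Conditioning on $\NO$, this identity converts the factor $\NO$ multiplying the log-density at the labeled neighbor $1$ into a sum over all neighbors $v \in N_o(\TB)$ of the log-density evaluated at $(\TBOV_{h-1}, \TBVO_{h-1})$, which is exactly the right-hand side of \eqref{eqn:PIRPIRstar-simplify}.

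The only delicate point, as in the proof of Lemma \ref{lemma:secondterm-h=1}, is to justify the use of \eqref{eqn:exchangeability} in the presence of a signed integrand. Here I would split $\log\tfrac{d\SBRM{h}}{d\SBRstarM{h}}$ into positive and negative parts, apply \eqref{eqn:exchangeability} to each separately, and then recombine; the finiteness $H(\SBRM{h} \| \SBRstarM{h}) < \infty$ from Lemma \ref{lemma:finiteness}\ref{item:HPIRHD}, together with a change-of-measure bound analogous to \eqref{eqn:extra-terms-h=1-finiteness-0}, ensures that the positive part has finite expectation under the size-biasing, and hence the decomposition is legitimate. This is the only real obstacle and it is routine given the preparatory lemmas; no unimodularity (mass transport) argument is needed for this step --- unimodularity will be used only later, in the analog of Proposition \ref{lemma:extra-terms-h=1}.
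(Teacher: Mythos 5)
Your proposal is correct and follows essentially the same route as the paper's proof: absolute continuity from Lemma \ref{lemma:preliminary-prob-prop}\ref{lemma:Rstarabscont}, the change of measure via \eqref{eqn:SBRinf} to obtain $\Exp_{\r{h}}\sqpr{\NO \log\cpr{\tfrac{d\SBRM{h}}{d\SBRstarM{h}}(\TBOOne_{h-1},\TBOneO_{h-1})}}$, and then the exchangeability identity \eqref{eqn:exchangeability} justified by the finiteness \ref{item:HPIRHD} of Lemma \ref{lemma:finiteness}. Your explicit positive/negative-part splitting is a legitimate (and slightly more careful) way of spelling out the integrability step that the paper leaves implicit.
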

\begin{proof}
By \ref{lemma:Rstarabscont} of Lemma \ref{lemma:preliminary-prob-prop}, we have  $\r{h} \ll \Rstar{h}$ and  $\SBRM{h} \ll \SBRstarM{h}$. Together with the fact that $\SBR{h}(\NO \geq 1)$,
\begin{align*}
\b H(\SBRM{h} \| \SBRstarM{h}) & = \b \Exp_{\SBRM{h}} \sqpr{\log\cpr{\frac{d\SBRM{h}}{d\SBRstarM{h}}}} \\
& = \b \Exp_{\SBR{h}} \sqpr{\log\cpr{\frac{d\SBRM{h}}{d\SBRstarM{h}}(\TBOOne_{h-1},\TBOneO_{h-1})} \indf\{\NO \geq 1\}}.
\end{align*}
Substituting the expression for  $\SBR{h}$ from \eqref{eqn:SBRinf} into the above display yields
\begin{align*}
\b H(\SBRM{h} \| \SBRstarM{h}) 
& = \Exp_{\r{h}} \sqpr{ \NO  \log\cpr{\frac{d\SBRM{h}}{d\SBRstarM{h}}(\TBOOne_{h-1},\TBOneO_{h-1})}}. \numberthis \label{eqn:PIRPIRstar-simplify-step1}
\end{align*}
Also, since $\EDLDRinf < \infty$, it follows from Lemma \ref{lemma:finiteness} that  $H(\SBRM{h} \| \SBRstarM{h}) < \infty$. Therefore, from \eqref{eqn:PIRPIRstar-simplify-step1}, conditioning on $\dego$ and using \eqref{eqn:exchangeability} with $f(\cdot, \cdot) = \log \cpr{\frac{d\SBRM{h}}{d\SBRstarM{h}}(\cdot, \cdot)}$, we obtain \eqref{eqn:PIRPIRstar-simplify}.
\end{proof}

We are now in a position to establish the main result of this subsection.
\begin{proposition}[Alternative form of the remaining terms of $\calItilde_{\b, \eta}$]
\label{prop:alternate-form-h}
Suppose that
\[ \EDLDRinf < \infty. \]
Then for $h \geq 2$, we have
\begin{align}
H(\r{h} \| \Rstar{h}) - \frac{\b}{2} H(\SBRM{h} \| \SBRstarM{h}) = \frac{1}{2}\sqpr{H(\r{h} \| \RP_h) + H(\r{h} \| \RCP_h)}.
\label{eqn:alternate-form-h}
\end{align}
\end{proposition}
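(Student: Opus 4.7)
The plan is to mirror the strategy of Proposition~\ref{prop:alternate-form-1}, but the argument for $h \geq 2$ is considerably simpler because $\RP_h$ is defined directly as $\Rstar{h}$ (see \eqref{eqn:RPh-def}), so the identity $H(\r{h} \| \RP_h) = H(\r{h} \| \Rstar{h})$ is immediate. The entire content of the proposition therefore reduces to establishing the single identity
\begin{equation*}
H(\r{h} \| \RCP_h) = H(\r{h} \| \Rstar{h}) - \b \, H(\SBRM{h} \| \SBRstarM{h}),
\end{equation*}
after which averaging with the trivial identity yields \eqref{eqn:alternate-form-h}.

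First I would assemble the absolute-continuity chain $\r{h} \ll \RCP_h \ll \Rstar{h}$: the left inclusion is part \ref{lemma:RCPh-abs-cont} of Lemma~\ref{lemma:preliminary-prob-prop}, while $\RCP_h \ll \Rstar{h}$ is immediate from the density formula in \eqref{eqn:RCPh}. The chain rule, together with \eqref{eqn:RCPh}, then yields $\r{h}$-almost surely,
\begin{equation*}
\log \frac{d\r{h}}{d\RCP_h}(\TB) = \log \frac{d\r{h}}{d\Rstar{h}}(\TB) - \sum_{v \in N_o(\TB)} \log \frac{d\SBRM{h}}{d\SBRstarM{h}}\cpr{\TBOV_{h-1}, \TBVO_{h-1}}.
\end{equation*}
Before taking $\r{h}$-expectations I would verify the integrability required to split the relative entropy into the two terms on the right. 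The first term has finite expectation by Lemma~\ref{lemma:finiteness}\ref{item:HRHD}. For the second, applying the exchangeability identity \eqref{eqn:exchangeability} reduces the $L^1$ norm over $\r{h}$ to a constant multiple of $\Exp_{\SBRM{h}}\!\left[\left|\log \frac{d\SBRM{h}}{d\SBRstarM{h}}\right|\right]$, which is controlled by $H(\SBRM{h} \| \SBRstarM{h}) < \infty$ (Lemma~\ref{lemma:finiteness}\ref{item:HPIRHD}) via the elementary bound $\Exp_{\mu}[|\log d\mu/d\nu|] \leq H(\mu\|\nu) + 2/e$ valid for any $\mu \ll \nu$.

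With integrability in hand, integrating the chain-rule identity against $\r{h}$ and invoking Lemma~\ref{lemma:PIRPIRstar-simplify} to identify the sum over neighbors with $\b \, H(\SBRM{h} \| \SBRstarM{h})$ produces the displayed identity for $H(\r{h} \| \RCP_h)$, and averaging with $H(\r{h} \| \RP_h) = H(\r{h} \| \Rstar{h})$ completes the proof. The only real obstacle is organizational bookkeeping to justify the term-by-term integration of the chain rule, which is entirely handled by the finiteness results of Lemma~\ref{lemma:finiteness}. Notably, no analogue of the mass transport argument from the $h=1$ case is needed here, precisely because $\RP_h$ is defined as the unimodular extension $\Rstar{h}$ itself rather than via a product of single-step marginals that would require re-expression.
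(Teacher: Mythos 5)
Your proposal is correct and follows essentially the same route as the paper's proof: exploit $\RP_h = \Rstar{h}$, use the absolute-continuity chain $\r{h} \ll \RCP_h \ll \Rstar{h}$ together with the chain rule and the density formula \eqref{eqn:RCPh}, and invoke Lemma \ref{lemma:PIRPIRstar-simplify} to identify the neighbor sum with $\b H(\SBRM{h} \| \SBRstarM{h})$ before averaging. Your extra remark on $L^1$-integrability of the neighbor sum is a slightly more explicit justification of a step the paper handles implicitly through Lemma \ref{lemma:finiteness} and Lemma \ref{lemma:PIRPIRstar-simplify}, but it does not change the argument.
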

\begin{proof}
Since $\EDLDRinf < \infty$, it follows from \ref{item:HRHD} and \ref{item:HPIRHD} of Lemma \ref{lemma:finiteness} that  $H(\r{h} \| \Rstar{h}) < \infty$ and $H(\SBRM{h} \| \SBRstarM{h}) < \infty$. We first write $H(\r{h} \| \Rstar{h})$ in terms of $H(\r{h} \| \RCP_h)$. Since $\rho \in \PUB$, by \ref{lemma:Rstarabscont} and  \ref{lemma:RCPh-abs-cont} of Lemma \ref{lemma:preliminary-prob-prop}, it follows that $\r{h} \ll \Rstar{h}$ and  $\r{h} \ll \RCP_h$, respectively. Moreover, $\RCP_h \ll \Rstar{h}$  by \eqref{eqn:RCPh}. Hence, writing 
$
\frac{d\r{h}}{d\Rstar{h}} = \frac{d\r{h}}{d\RCP_h} \times \frac{d\RCP_h}{d\Rstar{h}}, 
$
we have 
\begin{align*}
H(\r{h} \| \Rstar{h}) &=  \Exp_{\r{h}} \sqpr{\log\cpr{\frac{d\r{h}}{d\RCP_h} \times \frac{d\RCP_h}{d\Rstar{h}}}}  = H(\r{h} \| \RCP_h) + \Exp_{\r{h}} \sqpr{\log \cpr{\frac{d\RCP_h}{d\Rstar{h}}}}. \numberthis \label{thm:atlernate-form-h-step1}
\end{align*}
Using \eqref{eqn:RCPh} to rewrite $\log \cpr{\frac{d\RCP_h}{d\Rstar{h}} \cpr{\tau}}$ for $\tau \in \T{h}$, and
invoking Lemma  \ref{lemma:PIRPIRstar-simplify}  (which is justified by the assumption $\EDLDRinf < \infty$), we obtain
\begin{align*}
H(\r{h} \| \Rstar{h}) &= H(\r{h} \| \RCP_h) + \Exp_{\r{h}} \sqpr{\sum_{v \in N_o(\TB)} \log \cpr{\frac{d\SBRM{h}}{d\SBRstarM{h}}(\TBOV_{h-1}, \TBVO_{h-1})}}.\\
& =  H(\r{h} \| \RCP_h) + \b H(\SBRM{h} \| \SBRstarM{h}). \numberthis \label{thm:atlernate-form-h-step2}
\end{align*}
On the other hand, since $\RP_h = \Rstar{h}$ from \eqref{eqn:RPh-def}, we have  
$
H(\r{h} \| \Rstar{h}) = H(\r{h} \| \RP_h).
$
Thus, \eqref{eqn:alternate-form-h} follows by adding $H(\r{h} \| \Rstar{h}) - \b H(\SBRM{h} \| \SBRstarM{h})$ to both sides of \eqref{thm:atlernate-form-h-step2} and dividing by $2$.
\end{proof}

\subsection{Proof of Theorem \ref{thm:alternate-form-theorem}}
We fix $\b \geq 0$ and assume that  $\eta$ is either $\etaa$ or $\etab{\b}$. We start with a preliminary lemma.

\begin{lemma} 
\label{lemma:EDLDinf-cond}
Let $\rho \in \PUB$. If $\EDLDRinf = \infty$,  then 
\begin{align*}
H(\rho_{o, \text{deg}} \| \etab{\b}_{o, \text{deg}}) = \infty.
\end{align*}
\end{lemma}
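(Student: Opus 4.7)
The plan is to exploit the explicit Poisson form of $\etab{\b}_{o,\text{deg}}$ and compare via Stirling's formula. First I would record that $\etab{\b}_{o,\text{deg}} = \mathrm{Poi}(\b)$, i.e.\ $q(k) := \etab{\b}_{o,\text{deg}}(k) = e^{-\b}\b^k/k!$ for $k \in \NZ$. Since $q > 0$ pointwise, $\rho_{o,\text{deg}} \ll \etab{\b}_{o,\text{deg}}$ automatically, and one may write
\begin{equation*}
H(\rho_{o,\text{deg}} \,\|\, \etab{\b}_{o,\text{deg}}) = \Exp_{\rho}\sqpr{-\log q(\dego)} - H(\rho_{o,\text{deg}}),
\end{equation*}
both terms on the right being non-negative. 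It therefore suffices to show that the first term is $+\infty$ while the second (entropy) term is finite.

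For the first term, Stirling yields a constant $C_0 > 0$ with $\log(k!) \geq k\log k - k - C_0$ for every $k \geq 1$ (the $k=0$ term is harmless). Hence
\begin{equation*}
-\log q(k) \;=\; \b - k\log \b + \log k! \;\geq\; k \log k - (1 + \log \b)\, k + (\b - C_0)
\end{equation*}
for all $k \geq 1$. Taking expectation under $\rho_{o,\text{deg}}$, the right-hand side becomes $\EDLDRinf - (1+\log \b)\b + (\b - C_0)$, which is $+\infty$ by hypothesis together with $\Exp_\rho[\dego] = \b < \infty$ (the latter holds because $\rho \in \PUB$). Thus $\Exp_\rho[-\log q(\dego)] = \infty$.

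For the second term, I would use Gibbs' inequality against the geometric distribution $Q^*$ on $\NZ$ with mean $\b$, defined by $Q^*(k) = (1+\b)^{-1}\bigl(\b/(1+\b)\bigr)^k$. Since $\log Q^*(k) = -\log(1+\b) + k\log(\b/(1+\b))$ is an affine function of $k$, one has
\begin{equation*}
H(\rho_{o,\text{deg}}) \;\leq\; -\Exp_{\rho}\sqpr{\log Q^*(\dego)} \;=\; \log(1+\b) + \b\log\bigl(1 + \tfrac{1}{\b}\bigr) \;<\; \infty,
\end{equation*}
where the inequality uses $\sum_k \rho_{o,\text{deg}}(k)\log(\rho_{o,\text{deg}}(k)/Q^*(k)) \geq 0$ and the equality uses $\Exp_\rho[\dego] = \b$. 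Combining this bound with the previous step, $H(\rho_{o,\text{deg}} \|\etab{\b}_{o,\text{deg}}) = \infty - \text{(finite)} = \infty$, as claimed.

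The proof is largely routine; the only point requiring care is to avoid a spurious $\infty - \infty$ when splitting the relative entropy, which is why the finiteness of $H(\rho_{o,\text{deg}})$ via the geometric maximum-entropy comparison is isolated as a separate step.
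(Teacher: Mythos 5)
Your proof is correct and follows essentially the same route as the paper: a Stirling lower bound on $\log(k!)$ to force the Poisson cross-entropy term to $+\infty$, together with a comparison of $\rho_{o,\text{deg}}$ against a geometric law (the paper uses $\Geom$, you use the mean-$\b$ geometric) to bound the Shannon entropy $H(\rho_{o,\text{deg}})$ by a constant multiple of $\b$. Your explicit isolation of the finiteness of $H(\rho_{o,\text{deg}})$ to avoid an $\infty-\infty$ issue is a minor presentational refinement, not a different argument.
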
	
\begin{proof}
Let $\rho \in \PUB$ be such that $\EDLDRinf = \infty$.  This, in particular, implies that $0 < \b < \infty$. Let $\Geom$ denote the geometric distribution with parameter $1/2$. It follows that
\begin{align*}
0 \leq H(\rho_{o, \text{deg}} \| \Geom)   =  \sum_{k \in \NZ} \rho_{o, \text{deg}}(k) \log \cpr{\frac{\rho_{o, \text{deg}}( k)}{\Geom(k)}}
& = -H(\rho_{o, \text{deg}}) + \log 2 \cdot  \EDRinf,
\end{align*}
where $H$ is the Shannon entropy, and hence
\begin{align}
H(\rho_{o, \text{deg}}) \leq \log 2 \cdot  \EDRinf = \b \log 2. \label{eqn:thm-main-alternate-form-step1}
\end{align}
On the other hand, since there exists constants $c_1, c_2 > 0$ such that $\log(k!) \geq c_1 + c_2 k \log k$ by Stirling's approximation, 
\begin{align*}
H(\rho_{o, \text{deg}} \| \etab{\b}_{o, \text{deg}}) \geq  - H(\rho_{o, \text{deg}}) + \b - \log \b \EDRinf  + c_1 +  c_2 \EDLDRinf,
\end{align*} 
and the result follows from \eqref{eqn:thm-main-alternate-form-step1} and the assumption $\EDLDRinf = \infty$.
\end{proof}

We can now prove Theorem \ref{thm:alternate-form-theorem}.
\begin{proof}[Proof of Theorem \ref{thm:alternate-form-theorem}]
If $\b = 0$, from the definitions of $\calItilde_{\b, \eta}$ and $\calI_{\b, \eta}$ in \eqref{eqn:calItilde-generic} and \eqref{eqn:calI-generic}, respectively, we have $\calItilde_{\b, \eta}(\rho) = \calI_{\b, \eta}(\rho) = H(\r{o} \| \nu)$. 

Next, assume $\b > 0$. In this situation, we have three cases.

{\em Case I:} Suppose $\rho \notin \PUB$.  Then by \eqref{eqn:calItilde-generic} and \eqref{eqn:calI-generic}  we have $\calItilde_{\b, \eta}(\rho) = \calI_{\b, \eta}(\rho) = \infty$. 

{\em Case II:} Suppose  $\rho \in \PUB$ and  $\EDLDRinf = \infty$.  Then from the definition of $\calItilde_{\b, \eta}$ in \eqref{eqn:calItilde-generic},  we have  $\calItilde_{\b, \eta}(\rho) =\infty$. We now argue that $\calI_{\b, \eta}(\rho) = \infty$. Due to \eqref{eqn:calI-generic}, we may assume that $\r{1} \ll \eta_1$. Since $\EDLDRinf = \infty$ and $\alpha$ has finite support, it follows that $\eta = \etab{\b}$ for some $\b > 0$. From the definitions of $\calI_{\b, \eta}$ and $\RP_1$ in \eqref{eqn:calI-generic} and \eqref{eqn:RP}, respectively, and using the chain rule and non-negativity of relative entropy, we obtain
\begin{align*}
2\calI_{\b, \eta}(\rho) & \geq H(\r{1} \| \RP_1) \geq H(\rho_{o, \text{deg}} \| \etab{\b}_{o, \text{deg}}) = \infty,
\end{align*}
where the latter inequality follows from Lemma \ref{lemma:EDLDinf-cond}. This shows  $\calItilde_{\b, \eta}(\rho) = \calI_{\b, \eta}(\rho) = \infty$.

{\em Case III:} Suppose let $\rho \in \PUB$ and $\EDLDRinf < \infty$. Then Proposition \ref{prop:alternate-form-1}, Proposition \ref{prop:alternate-form-h}, \eqref{eqn:calItilde-generic}, and  \eqref{eqn:calI-generic} together imply  $\calItilde_{\b, \eta}(\rho) = \calI_{\b, \eta}(\rho)$. This completes the proof.
\end{proof}

\subsection{Proof of Corollary \ref{cor-kapparf}}
\label{sec:proof-vertex-only}
Let $\mu \in \P^\b(\T{1})$ be such that $\SBMuM$ is symmetric, $\Exp_{\mu} \sqpr{\dego} = \b > 0$, and $\mu \ll \eta_1$. From \eqref{eqn:RP}-\eqref{eqn:RCP} it is automatic that $\MCP \ll \MP$. Since $\eta^{1|o}(\cdot | x_o) = \nu(\cdot)$ for all $x_o \in \X$  in the case when there are no edge marks, it follows that 
\begin{align*}
    \frac{d\MCP}{d\MP} (\tau) = \prod_{v \in N_o(\tau)} \frac{d\SBMuM}{d(\SBMuRM \otimes \SBMuMZero)}\cpr{X_v, X_o}, \quad \tau = (\tau, X) \in \TX.
\end{align*}
Therefore, by the definition in  \eqref{eqn:sbmubar} and the exchangeability property in Remark \ref{remark:exchageability}, we get
\begin{align*}
    H(\mu \| \MP)  &= H(\mu \| \MCP) + \Exp_{\mu} \sqpr{\log \cpr{\frac{d\MCP}{d\MP}}}\\ 
    & = H(\mu \| \MCP) + \Exp_{\mu} \sqpr{\dego \log \cpr{\frac{d\SBMuM }{d(\SBMuRM \otimes \SBMuMZero)}(\bfX_1,\bfX_o)}}\\
    &  = H(\mu \| \MCP) + \b \Exp_{\bar{\mu}} \sqpr{ \log \cpr{\frac{d\SBMuM }{d(\SBMuRM \otimes \SBMuMZero)}(\bfX_1, \bfX_o)}} \\ 
    & = H(\mu \| \MCP) + \b H(\SBMuM \| \SBMuRM \otimes \SBMuMZero),
\end{align*}
and the result follows.
\qed 

\section{Proof of Theorems \ref{thm:iid-old} and \ref{thm:iid-nbd}}
In this section, we complete the proof of our main result. We define the microstate entropy and related quantities in Section \ref{sec:prelim-pi-eh} and state the existing LDPs with rate functions involving the microstate entropy in Section \ref{sec:iid-entropy-form-proof}. We then prove Theorem \ref{thm:iid-old} by establishing the equivalence between these rate functions and  \eqref{eqn:I-CM-iid-old}-\eqref{eqn:I-ER-iid-old} in Sections \ref{sec:I-cm-equivalence-proof}-\ref{sec:I-fe-er-equivalence-proof}. Together with Theorem \ref{thm:alternate-form-theorem}, this completes the proof of the component empirical measure LDP (Theorem \ref{thm:iid}). Finally, the LDP for the neighborhood empirical measure (Theorem \ref{thm:iid-nbd}) is deduced as a corollary to Theorem \ref{thm:iid} in Section \ref{sec:iid-nbd-pf}.
\label{sec:iid-proof-complete}

\subsection{Preliminaries}
\label{sec:prelim-pi-eh}
We begin with some preliminaries. Throughout this subsection, we fix $\b > 0$. Also, recall $\PB$ from Section \ref{sec:prelim-probability-graphs}.

\subsubsection{Covariance measures associated with $\PB$ elements}
We start by defining some combinatorial quantities associated with $\Tinf$ elements and identify the size-biased marginals in Definition \ref{def:admissible-h} using them. 

Let $h \geq 1$. Recall Definition \ref{def:vcutu}. We make the following definition.
\begin{definition}
\label{def:Eh}
For $h \in \N$, given $\tau, \tau' \in \T{h-1} \times \Y$, define $E_h(\tau, \tau^\prime)  : \T{h} \to \NZ$ by
\begin{align}
E_h(\tau, \tau^\prime) (\TBdef) := \left| \pr{v \in N_o(\TBdef) : \TBdefOV_{h-1} = \tau, \TBdefVO_{h-1} = \tau^\prime} \right|, \quad \TBdef \in \Tinf.
\label{eqn:def-Eh}    
\end{align}
In words, $E_h(\tau, \tau')(\TBdef)$ is equal to the number of neighbors $v$ of the root $o$ in the marked tree $\TBdef$ such that the pair of trees comprised of (i) the $(h-1)$-neighborhood of $v$ with root $v$ in the tree $\TBdef$ with the edge $\{v, o\}$ removed (but with the edge mark $y_{(v, o)}$ retained), and (ii) the $(h-1)$-neighborhood of $o$ with root $o$ in the tree $\TBdef$ with the edge $\{v,o\}$ removed (but with the edge mark $y_{(o, v)}$ retained) is equal to $(\tau, \tau')$.
\end{definition}
We now describe certain covariance measures associated with $\PB$ elements that were first defined for the case of unmarked graphs in  \cite[Section 1.3]{BorCap15}, and then extended to marked graphs in \cite[Section 2]{DelAna19}.
\begin{definition}[Covariance measures]
\label{def-PIRh}
Let $\rho \in \PB$. For $h \in \N$, define $\PIR{h} \in \P((\T{h-1} \times \Y)^2)$ by
\begin{align}
\PIR{h}(\tau, \tau^\prime) := \frac{1}{\b} \Exp_{\r{h}}\sqpr{E_h(\tau, \tau^\prime) (\TB)}, \, \text{ for } \tau, \tau^\prime \in \T{h-1} \times \Y.
\label{eqn:PIRh}
\end{align}
Also, let $\PIR{h}^1$ (resp. $\PIR{h}^o$) denote the first (resp. second) marginal of $\PIR{h}$.
\end{definition}

\begin{remark}
The arguments $\tau$ and $\tau'$ in the definition of $E_h$ in \eqref{eqn:def-Eh} are elements of $\T{h-1} \times \Y$, that is, they are rooted marked trees of depth at most $h-1$ (with an additional edge mark); in particular, they could also be trees of depth $h-2$ (with an additional edge mark). The same applies to the arguments of $\PIR{h}$ as well.
\label{remark:Eh}
\end{remark}

\begin{remark}
In Definition \ref{def:Eh}, when $h = 1$, both $\tau$ and $\tau'$ are $\X \times \Y$ elements. Therefore, in Definition \ref{def-PIRh},  $\PIR{1}$ is a probability measure on $(\X \times \Y)^2$. 
\label{remark:h=1-PIRh}
\end{remark}

\begin{remark}
Let $h \in \N$, $\tau, \tau' \in \T{h-1} \times \Y$, and $\r{h} \in \P^\b(\T{h})$. Let  $\Law(\TB) =\r{h}$. For $\TBdef \in \T{h}$ with $\text{deg}_{\TBdef}(o) \geq 1$, the conditional probability that $(\TBOOne_{h-1} = \tau$, $\TBOneO_{h-1} = \tau')$ given $\TB  =\TBdef$ is 
\begin{align*}
\r{h}\cpr{ \cpr{\TBOOne_{h-1} = \tau, \TBOneO_{h-1} = \tau'} \mid \TB = \TBdef} =  \frac{E_h(\tau, \tau^\prime)(\TBdef)}{\text{deg}_{\TBdef}(o)}.
\end{align*}
This follows from the Ulam-Harris-Neveu labeling scheme in Section \ref{sec:labeling}.
\label{remark:Eh-conditional}
\end{remark}

We now show that $\PIR{h}$ coincides with size-based marginals $\SBRM{h}$ in Definition \ref{def:admissible-h}.

\begin{lemma}Let $\rho \in \PB$. For every $h \in \N$, we have $\PIR{h} = \SBRM{h}$.
\label{lemma:PI-SBR}
\end{lemma}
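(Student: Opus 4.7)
The plan is a direct computation: expand $\SBRM{h}(\tau,\tau')$ using the Radon--Nikodym derivative in \eqref{eqn:SBRinf} (with $\rho$ replaced by $\rho_h$) and then use the Ulam--Harris--Neveu labeling description of Remark \ref{remark:Eh-conditional} to rewrite the indicator as a combinatorial count.

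Fix $h \in \N$ and $\tau,\tau' \in \T{h-1}\times\Y$, and let $\Law(\TB)=\rho_h$. Since $\SBR{h}(\dego\ge 1)=1$ and $\{\TBOOne_{h-1}=\tau,\TBOneO_{h-1}=\tau'\}\subset\{\dego\ge 1\}$, the definition of $\SBRM{h}$ and \eqref{eqn:SBRinf} give
\begin{align*}
\SBRM{h}(\tau,\tau')
&= \SBR{h}\bigl(\TBOOne_{h-1}=\tau,\ \TBOneO_{h-1}=\tau'\bigr)\\
&= \Exp_{\r{h}}\!\left[\frac{\dego}{\beta}\,\indf\bigl\{\TBOOne_{h-1}=\tau,\ \TBOneO_{h-1}=\tau',\ \dego\ge 1\bigr\}\right].
\end{align*}
Conditioning on $\TB$ and applying Remark \ref{remark:Eh-conditional}, on $\{\dego\ge 1\}$ the conditional probability of the event $\{\TBOOne_{h-1}=\tau, \TBOneO_{h-1}=\tau'\}$ is exactly $E_h(\tau,\tau')(\TB)/\dego$, so the $\dego$ factors cancel:
\begin{align*}
\SBRM{h}(\tau,\tau')
&= \frac{1}{\beta}\,\Exp_{\r{h}}\!\left[\dego\cdot\frac{E_h(\tau,\tau')(\TB)}{\dego}\,\indf\{\dego\ge 1\}\right]
= \frac{1}{\beta}\,\Exp_{\r{h}}\bigl[E_h(\tau,\tau')(\TB)\bigr],
\end{align*}
where the last equality uses that $E_h(\tau,\tau')(\TB)=0$ whenever $\dego=0$ (the defining count is empty). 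The right-hand side is $\PIR{h}(\tau,\tau')$ by \eqref{eqn:PIRh}, completing the proof.

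There is no real obstacle; the argument is just careful bookkeeping. The one subtlety is the use of the Ulam--Harris--Neveu labeling to justify Remark \ref{remark:Eh-conditional}: given $\TB=\TBdef$ with $\dego\ge 1$, the neighbor assigned label $1$ is uniform over $N_o(\TBdef)$, and the event $\{\TBOOne_{h-1}=\tau,\TBOneO_{h-1}=\tau'\}$ picks out exactly those $v\in N_o(\TBdef)$ whose associated pair of truncated subtrees equals $(\tau,\tau')$, yielding the ratio $E_h(\tau,\tau')(\TBdef)/\text{deg}_{\TBdef}(o)$. Everything else is linearity of expectation.
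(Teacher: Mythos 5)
Your proof is correct and follows essentially the same route as the paper: both expand $\SBRM{h}(\tau,\tau')$ via the Ulam--Harris--Neveu conditional probability $E_h(\tau,\tau')(\TBdef)/\text{deg}_{\TBdef}(o)$ from Remark \ref{remark:Eh-conditional}, change measure from $\SBR{h}$ to $\r{h}$ using \eqref{eqn:SBRinf} so the degree factors cancel, and identify the result with \eqref{eqn:PIRh}. The only difference is presentational (expectation/tower-property notation versus the paper's explicit sum over trees), so nothing further is needed.
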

\begin{proof}
Fix $h \in \N$. Applying first Definition \ref{def:admissible-h} and Remark \ref{remark:Eh-conditional}, then \eqref{eqn:SBRinf} and the identity $\b = \Exp_{\rho} \sqpr{\NO}$, and finally \eqref{eqn:PIRh}, for $\tau, \tau^\prime \in \T{h-1} \times \Y$, we have
\begin{align*}
\SBRM{h}(\tau, \tau^\prime) & = \sum_{\substack{\TBdef \in \T{h}: \\ \text{deg}_{\TBdef}(o) \geq 1}} \SBR{h}(\TBdef) \frac{E_h(\tau, \tau^\prime)(\TBdef)}{\text{deg}_{\TBdef}(o)} \\
& = \sum_{\substack{\TBdef \in \T{h}:\\ \text{deg}_{\TBdef}(o) \geq 1}}   \r{h}(\TBdef) \frac{E_h(\tau, \tau^\prime)(\TBdef)}{\b} \\
& = \PIR{h}(\tau, \tau^\prime).
\end{align*}
\end{proof}

In particular, we have:
\begin{remark}[Equivalent characterization of admissibility]
    In view of Lemma \ref{lemma:PI-SBR} and the definition of admissibility in Definition \ref{def:admissibility}, the admissibility of $\r{h}$ is equivalent to the condition that $\PIR{h}$ is symmetric. 
    \label{remark:equivalent-admissibility}
\end{remark}

\begin{remark}
\label{remark:admissibility}
    If $\rho$ is an element of  $\PUB$ from Definition \ref{def:unimod}, we claim that $\r{h}$ is admissible for each $h \in \N$. Indeed, for any $\tau, \tau' \in \T{h-1} \times \Y$, applying the mass transport principle \eqref{eqn:unimod} with the function $f: \Tdstar[\X ; \Y] \to \R_+ $ defined by
\begin{align*}
f(\TBdef, o_1, o_2) = \indf\{o_1 \in N_{o_2}(\TBdef), \, \TBdef(o_2 \setminus o_1)_{h-1} = \tau, \,  \TBdef(o_1 \setminus o_2)_{h-1} = \tau^\prime\}, \quad (\TBdef, o_1, o_2) \in \Tdstar[\X; \Y],
\end{align*}
it follows that $\PIR{h}(\tau, \tau^\prime) = \PIR{h}(\tau^\prime, \tau)$. By Remark \ref{remark:equivalent-admissibility}, this shows that $\r{h}$ is admissible.
\end{remark}

\subsubsection{Microstate entropy}
\label{sec:sigma}
We start with some definitions. 
\begin{definition}
    Given $\rho \in \PUB$, define the function $\bvr: \Y \times \Y \to \R_+$ by
\begin{align}
\bvr(y,y^\prime) := \b \sum_{x,x^\prime \in \X} \PIR{1}((x,y), (x',y^\prime)), \quad y,y^\prime \in \Y.
\label{eqn:def-kvr}
\end{align}
Note that $\frac{\bvr}{\b} \in \P(\Y \times \Y)$, since 
\begin{align}
\sum_{y,y^\prime \in \Y}\bvr(y,y^\prime) = \b,
\label{eqn:kvr-sum}
\end{align}
and $\frac{\bvr}{\b}$ is the $(\Y \times \Y)$-marginal of $\PIR{1} \in \P((\X \times \Y) \times (\X \times \Y))$.
Also, define
\begin{align}
s(\b) &:= \frac{\b}{2} - \frac{\b}{2} \log \b,\\
\vec{s}(\bvr) &:= \sum_{y, y^\prime \in \Y} \cpr{\frac{\bvr(y,y^\prime)}{2} - \frac{\bvr(y,y^\prime)}{2} \log \cpr{ \bvr(y,y^\prime)}}.
\label{eqn:def-s-kvec}
\end{align}
\label{def:s-s-bvr}
\end{definition}

Fix $h \in \N$. If $\EDLDRinf < \infty$, by Lemma \ref{lemma:finiteness} and Lemma \ref{lemma:PI-SBR}, we see that  $H(\r{h}) < \infty$ and $H(\PIR{h}) < \infty$. Define $J_h : \P(\G{h}) \to [-\infty, \infty)$  by
\begin{align}
J_h(\r{h}) :=-s(\b) + H(\r{h}) - \frac{\b}{2} H(\pi_{\r{h}}) - \sum_{\tau, \tau^\prime \in \T{h-1} \times \Y } \Exp_{\r{h}} [ \log \cpr{\sqpr{ E_h(\tau, \tau^\prime)(\TB)}!}], 
\label{eqn:jh}
\end{align}
if $\r{h} \in \P^\b(\T{h})$, $\r{h}$ is admissible, and $\EDLDR{h} < \infty$; and  $J_h(\r{h}) := -\infty$ otherwise. Although $J_h$ depends on $\b$, we do not denote this dependence for the ease of readability.

We now define the microstate entropy. Given $\theta \in \P(\X)$ and a symmetric function $\lambdavec : \Y \times \Y \to \R_+$, define the map  $\Sigma_{\lambdavec, \theta} : \P(\Ginf) \to [-\infty, \infty]$ as follows:
\begin{align}
\Sigma_{\lambdavec, \theta}(\rho):=
\begin{dcases}
\lim_{h \to \infty} J_h(\r{h}) & \text{ if } 0 < \b:= \Exp_{\rho}\sqpr{\dego} < \infty, \, \rho \in \PUB,   \bvr = \lambdavec, \text{ and }  \r{o} = \theta,  \\
-\infty & \text{ otherwise} 
\end{dcases}
\label{eqn:sigma}
\end{align}
The quantity  $\Sigma_{\lambdavec, \theta}$ is well defined due to the fact that $h \mapsto J_h(\r{h})$ is non-decreasing (see \cite[Theorem 1.3]{BorCap15}, \cite[Theorem 3]{DelAna19}).
This quantity was first introduced in  \cite{BorCap15} in the setting of unmarked graphs and extended to the marked setting in \cite{DelAna19}. In both settings, it is referred to as the ``microstate entropy'' (see also \cite{CheRamYas23}) since it is defined in terms of a graph counting problem. 

\begin{remark}
\label{remark:sigma}
    Note that $\Sigma_{\lambdavec, \theta}(\rho) = -\infty$ whenever either (i) $\bvr \neq \lambdavec$, (ii) $\r{o} \neq \theta$, or (iii) $\rho \notin \PUB$.
\end{remark}

\subsection{The LDP for the component empirical measure}
\label{sec:iid-entropy-form-proof}
We now state the existing results on the component empirical measure LDPs. 
Define the rate function for the $\CM$ graph sequence by
\begin{align}
\label{eqn:I-CM-entropy-form}
\calboldI^\CM(\rho) :=   H(\r{o} \| \nu) + \frac{\k}{2} H\cpr{\frac{\kvr}{\k} \biggr\| \bar{\xi}}  - \Exp_{\alpha}[\log \cpr{\NO !}] + H(\alpha)  +  H(\r{o}) +  \vec{s}(\kvr) - 2s(\k) - \Sigma_{\kvr, \r{o}}(\rho),
\end{align}
if $\rho_{o, \text{deg}} = \alpha$; and $\calboldI^\CM(\rho) := \infty$ otherwise. Here, $\kvr$ is as defined as in \eqref{eqn:def-kvr} with $\b$ replaced by $\k$. For $\b > 0$, define $\calboldI_\b: \P(\Ginf) \to [0, \infty]$ by
 \begin{align}
\calboldI_\b(\rho) := \begin{dcases}
H(\r{o}) + H(\r{o} \| \nu) +\frac{\b}{2} H\cpr{\frac{\bvr}{\b} \biggr\| \bar{\xi}} +  \vec{s}(\bvr) - \Sigma_{\bvr, \r{o}}(\rho), & \text{ if } \Exp_{\rho}[\dego] = \b,  \\
\infty, & \text{ otherwise},
\end{dcases}
\label{eqn:I-entropy-form}
\end{align}
and define the rate functions for the $\FE$ and $\ER$ sequences by

\begin{align}
    \calboldI^\FE(\rho) & : = \calboldI_\k(\rho) \\
    \calboldI^\ER(\rho) & := \begin{dcases}
\ell_\k(0)  + H(\r{o} \| \nu) & \text{if }  \Exp_{\rho}[\dego] = 0,\\
\ell_\k(\b') + \calboldI_{\b'}(\rho) , & \text{if } 0 < \b': = \Exp_{\rho}[\dego]  < \infty,  \\
\infty, & \text{otherwise}.
\end{dcases}
\label{eqn:I-entropy-form-er}
\end{align}

\begin{theorem}
\label{thm:old-iid-microstate-entropy}
    The sequence $\{U^\CM_n\}$ (respectively, $\{U^\FE_n\}$ and $\{U^\ER_m\}$) satisfies the LDP on $\P(\Ginf)$ with rate function $\calboldI^\CM$ (respectively, $\calboldI^\FE$ and $\calboldI^\ER$).
\end{theorem}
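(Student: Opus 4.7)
The plan is to recognize that Theorem \ref{thm:old-iid-microstate-entropy} is essentially a restatement of LDPs already established in the literature, and the task is primarily to verify that the form of the rate functions given in \eqref{eqn:I-CM-entropy-form}-\eqref{eqn:I-entropy-form-er} matches those appearing in \cite{CheRamYas23} for the $\CM$ case and in \cite{BalOlietal22} for the $\FE$ and $\ER$ cases (which in turn build on the colored configuration model of \cite{BorCap15} and its marked extension in \cite{DelAna19}). I would therefore begin by stating that each LDP is imported from the appropriate reference, and devote the proof to reconciling the bookkeeping.

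For the $\CM$ sequence, I would invoke the main LDP in \cite{CheRamYas23} and exhibit a line-by-line correspondence between the combinatorial terms there and the terms $H(\r{o}\|\nu)$, $H(\tfrac{\kvr}{\k}\|\bar\xi)$, $-\Exp_\alpha[\log(\NO!)]$, $H(\alpha)$, $H(\r{o})$, $\vec{s}(\kvr)$, $-2s(\k)$, and $-\Sigma_{\kvr,\r{o}}(\rho)$ in \eqref{eqn:I-CM-entropy-form}. The presence of the constraint $\rho_{o,\text{deg}}=\alpha$ is forced by the fact that $U_n^\CM$ has root-degree marginal $\alpha_n\to\alpha$ on the large deviation scale (deviations are super-exponentially rare by construction). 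For the $\FE$ sequence, the analogous step uses \cite[Theorem 4.1]{BalOlietal22}: the constraint $\Exp_\rho[\dego]=\k$ again reflects that the number of edges is deterministically $m_n$ with $m_n/n\to\k/2$, and the remaining combinatorial bookkeeping matches that of \eqref{eqn:I-entropy-form}. The $\ER$ case follows from \cite[Theorem 4.2]{BalOlietal22}; here the root-degree mean is not fixed, and the additional term $\ell_\k(\b')$ from \eqref{eqn:def-ell} captures the cost of a Poisson deviation of the empirical edge density from $\k/2$.

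The one piece of content beyond citation is to justify that the microstate-entropy term $\Sigma_{\bvr,\r{o}}$ as defined in \eqref{eqn:sigma} (via the non-decreasing sequence $J_h(\r{h})$ of \eqref{eqn:jh}) agrees with the limiting graph-counting functional appearing in \cite{BorCap15,DelAna19}. For this I would verify two things: first, that $h\mapsto J_h(\r{h})$ is non-decreasing (this is the content of \cite[Theorem 1.3]{BorCap15} and \cite[Theorem 3]{DelAna19} and so can be cited), so the limit exists in $[-\infty,\infty]$; second, that on admissible $\rho\in\PUB$ with the correct root-degree marginal, the combinatorial covariance measure used in the references coincides with $\PIR{h}$ in Definition \ref{def-PIRh}, which by Lemma \ref{lemma:PI-SBR} is the same as $\SBRM{h}$. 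The symmetry of $\PIR{h}$ needed in the references is precisely the admissibility of $\r{h}$, which by Remark \ref{remark:admissibility} is automatic under unimodularity.

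The main (and really the only) obstacle is therefore notational: ensuring that $E_h$, $\PIR{h}$, $s(\b)$, $\vec{s}(\bvr)$, and $\Sigma_{\bvr,\r{o}}$ as set up here correspond term-by-term to the quantities in \cite{CheRamYas23,BalOlietal22,DelAna19,BorCap15}, and that the degenerate cases ($\b=0$ in the $\ER$ setting, non-unimodular $\rho$, or $\rho$ with the wrong root-degree marginal) yield $+\infty$ in our formulation exactly when the cited references assign infinite cost. Once this dictionary is in place, the LDPs themselves are immediate consequences of the cited theorems, and no new probabilistic argument is required at this stage; the substantive work of the paper enters only in Theorem \ref{thm:iid-old} and Theorem \ref{thm:alternate-form-theorem}, which rewrite $\calboldI^\CM,\calboldI^\FE,\calboldI^\ER$ in the more tractable forms $\calItilde^\CM,\calItilde^\FE,\calItilde^\ER$ and then $\calI^\CM,\calI^\FE,\calI^\ER$.
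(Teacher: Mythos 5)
Your proposal matches the paper's own proof, which is exactly a citation of \cite{CheRamYas23} for the $\CM$ sequence and of \cite[Theorems 4.1 and 4.2]{BalOlietal22} for the $\FE$ and $\ER$ sequences, with the rate functions \eqref{eqn:I-CM-entropy-form}--\eqref{eqn:I-entropy-form-er} being restatements of those results in the paper's notation. The additional bookkeeping you describe (identifying $E_h$, $\PIR{h}$, and $\Sigma_{\bvr,\r{o}}$ with the corresponding quantities in the cited works) is implicit in the paper's statement of Theorem \ref{thm:old-iid-microstate-entropy} and is consistent with its treatment.
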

\begin{proof}
    The LDP for $\{U^\CM_n\}$ on $\P(\Ginf)$ with rate function $\calboldI^\CM$ is proved in \cite{CheRamYas23}. The LDP for $\{U^\FE_n\}$ (resp. $\{U^\ER_n\}$) on $\P(\Ginf)$ with rate function $\calboldI^\FE$ (resp. $\calboldI^\ER$) is proved in \cite[Theorem 4.1]{BalOlietal22} (resp.  \cite[Theorem 4.2]{BalOlietal22}).
\end{proof}

\subsection{Proof of $\calboldI^\CM = \calItilde^\CM$}
\label{sec:I-cm-equivalence-proof}
\subsubsection{Preliminary lemmas}
We need to first show some preliminary results.
Recall the definitions $\etaa$ and $\etab{\b}$ for $\b \geq 0$ from Remark \ref{remark:local-convergence}.

\begin{lemma} 
\label{lemma:pi-1-simplify}
Let $\beta > 0$ and let $\rho \in \PUB$. The quantities $\PIR{1}$, $\bar{\xi}$ and $\bvr$ defined in \eqref{eqn:PIRh}, \eqref{eqn:def-xibar} and \eqref{eqn:def-kvr}, respectively,  satisfy following identities:
\begin{enumerate}
    \item[(i)]  
    \begin{align}
    H(\pi_{\r{1}} \| \pi_{\etab{\b}_1}) & = H(\pi_{\r{1}} \| \pi_{\etaa_1}) \nonumber \\
    & = -H(\pi_{\r{1}}) - 2 \sum_{\substack{ y, y^\prime \in \Y, \\ x, x^\prime \in \X}} \pi_{\r{1}}((x,y), (x^\prime, y^\prime)) \log (\nu(x^\prime)) \nonumber \\
    & \qquad + H\cpr{\frac{\bvr}{\b} \biggr\| \bar{\xi}} -\frac{1}{\b} \sum_{y,y^\prime \in \Y} \bvr(y,y^\prime) \log(\bvr(y,y^\prime)) + \log \b.
    \label{eqn:iid-pf-pi-1-simplify}
    \end{align}
    \item[(ii)]  
    \begin{align}
    \sum_{y,y^\prime \in \Y} \bvr(y,y^\prime) \log\left(\bar{\xi}(y,y^\prime)\right) &  = -\b H\cpr{\frac{\bvr}{\b} \biggr\| \bar{\xi}} + \sum_{y,y^\prime \in \Y} \bvr(y,y^\prime) \log(\bvr(y,y^\prime)) - \b \log \b.
\label{eqn:iid-pf-kxibar}
\end{align}
\end{enumerate}
\end{lemma}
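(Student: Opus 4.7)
The proof will proceed by expanding the two relative entropies from their definitions and by identifying $\pi_{\etab{\b}_1}$ and $\pi_{\etaa_1}$ explicitly in terms of $\nu$ and $\bar\xi$.

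For part (ii), the plan is a direct algebraic manipulation. I will expand
\[
H\cpr{\frac{\bvr}{\b}\bigg\|\bar\xi} = \sum_{y,y'\in\Y}\frac{\bvr(y,y')}{\b}\log\cpr{\frac{\bvr(y,y')}{\b\,\bar\xi(y,y')}},
\]
multiply through by $\b$, split the logarithm, and invoke the identity $\sum_{y,y'}\bvr(y,y') = \b$ from \eqref{eqn:kvr-sum}. Rearranging to isolate $\sum_{y,y'}\bvr(y,y')\log \bar\xi(y,y')$ immediately yields \eqref{eqn:iid-pf-kxibar}.

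The main content of part (i) is the observation that $\pi_{\etab{\b}_1}$ and $\pi_{\etaa_1}$ coincide and admit the explicit product form
\[
\pi_{\etab{\b}_1}((x,y),(x',y')) \;=\; \pi_{\etaa_1}((x,y),(x',y')) \;=\; \nu(x)\,\nu(x')\,\bar\xi(y,y').
\]
By Lemma \ref{lemma:PI-SBR} (with $h=1$), this measure is $\SBetabarM$, the joint law of $((\bfX_1,\bfY_{(1,o)}),(\bfX_o,\bfY_{(o,1)}))$ under the size-biased law $\bar\eta_1$. Under either $\etab{\b}_1$ or $\etaa_1$, the vertex marks are i.i.d.\,$\nu$ and the edge pair $(\bfY_{(o,1)},\bfY_{(1,o)})$ is sampled from $\bar\xi$, all independent of the tree structure (Remark \ref{remark:local-convergence}). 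Since size-biasing by $\dego$ and conditioning on $\dego\geq 1$ affect only the root-degree distribution, the joint marginal of these four marks is unchanged; symmetry of $\bar\xi$ (from its definition \eqref{eqn:def-xibar}) lets us write $\Prob(\bfY_{(1,o)}=y,\bfY_{(o,1)}=y') = \bar\xi(y',y) = \bar\xi(y,y')$, giving the displayed product form.

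Given this, I will expand
\[
H(\pi_{\r{1}}\|\pi_{\etab{\b}_1}) = -H(\pi_{\r{1}}) - \sum_{x,x',y,y'}\pi_{\r{1}}((x,y),(x',y'))\bigl[\log\nu(x)+\log\nu(x')+\log\bar\xi(y,y')\bigr].
\]
Since $\rho\in\PUB$, Remark \ref{remark:admissibility} gives admissibility of $\r{1}$, and then Remark \ref{remark:equivalent-admissibility} gives symmetry of $\pi_{\r{1}}$, so the $\log\nu(x)$ and $\log\nu(x')$ sums are equal and collapse into the single term $-2\sum\pi_{\r{1}}\log\nu(x')$ appearing in \eqref{eqn:iid-pf-pi-1-simplify}. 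By \eqref{eqn:def-kvr}, the $\Y\times\Y$-marginal of $\pi_{\r{1}}$ is $\bvr/\b$, so $\sum\pi_{\r{1}}\log\bar\xi(y,y') = \b^{-1}\sum_{y,y'}\bvr(y,y')\log\bar\xi(y,y')$; substituting the expression from part (ii) for this last sum produces the three remaining terms on the right-hand side of \eqref{eqn:iid-pf-pi-1-simplify}. No step is technically difficult; the only conceptual point is the independence/symmetry argument that identifies $\pi_{\etaa_1}$ and $\pi_{\etab{\b}_1}$ with the common product measure.
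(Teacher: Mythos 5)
Your proposal is correct and follows essentially the same route as the paper: part (ii) by direct expansion of the relative entropy using $\sum_{y,y'}\bvr(y,y')=\b$, and part (i) by identifying $\pi_{\etab{\b}_1}=\pi_{\etaa_1}=\nu(x)\nu(x')\bar\xi(y,y')$ from the i.i.d.\ marks independent of the tree structure, then expanding $H(\pi_{\r{1}}\|\pi_{\etab{\b}_1})$, collapsing the two $\log\nu$ sums via admissibility (symmetry of $\pi_{\r{1}}$, from unimodularity), and substituting (ii). The only cosmetic difference is that you justify the product form through the size-biased marginal $\SBetabarM$ via Lemma \ref{lemma:PI-SBR}, while the paper argues directly from the covariance-measure definition \eqref{eqn:PIRh}; these are equivalent.
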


\begin{proof}  
We first show the second identity \eqref{eqn:iid-pf-kxibar}. Using the definition of relative entropy in \eqref{eqn:def-renelt} and the fact that  $\sum_{y,y^\prime  \in \Y} \bvr(y, y^\prime) = \b$ (see \eqref{eqn:kvr-sum}), we see that
\begin{align*}
\sum_{y,y^\prime \in \Y} \bvr(y,y^\prime) \log\left(\bar{\xi}(y,y^\prime)\right) & = -\b \sum_{y,y^\prime \in \Y} \frac{\bvr(y,y^\prime)}{\b} \log\cpr{\frac{\bvr(y,y^\prime)/\b}{{\bar{\xi}(y,y^\prime)}}} + \sum_{y,y^\prime \in \Y} \bvr(y,y^\prime) \log(\bvr(y,y^\prime)/\b),
\end{align*}
and  \eqref{eqn:iid-pf-kxibar} follows.

We now show the first identity \eqref{eqn:iid-pf-pi-1-simplify}. Since both $\etab{\b}$ and $\etaa$ have i.i.d.\,vertex and edge marks that are independent of each other and the tree structure, by the definition of $\PIeta$ in \eqref{eqn:PIRh} and Remark \ref{remark:h=1-PIRh}, it follows that both $\pi_{\etab{\b}_1}((x, y), (x^\prime, y^\prime))$ and $\pi_{\etaa_1}((x, y), (x^\prime, y^\prime))$ are equal to $ \nu(x) \nu(x^\prime) \bar{\xi}(y, y^\prime)$ for all $(x,y), (x', y') \in \X \times \Y$. This yields the identity
\begin{align}
H(\pi_{\r{1}} \| \pi_{\etab{\b}_1})  = H(\pi_{\r{1}} \| \pi_{\etaa_1}).
\label{eqn:pi-1-step1}
\end{align}
Moreover, together with  \eqref{eqn:def-entropy}, \eqref{eqn:def-renelt}, the admissibility of $\r{1}$ (which is  a consequence of the unimodularity of $\rho$ and Remark \ref{remark:admissibility}), and the definition of $\bvr$ in \eqref{eqn:def-kvr}, this implies
\begin{align*}
H(\pi_{\r{1}} \| \pi_{\etab{\b}_1}) & = - H(\pi_{\r{1}}) - \sum_{\substack{ y, y^\prime \in \Y, \\ x, x^\prime \in \X}} \pi_{\r{1}}((x,y), (x^\prime, y^\prime)) \log\cpr{\nu(x) \nu(x^\prime) \bar{\xi}(y, y^\prime)} \\
& = -H(\pi_{\r{1}}) - 2 \sum_{\substack{ y, y^\prime \in \Y, \\ x, x^\prime \in \X}} \pi_{\r{1}}((x,y), (x^\prime, y^\prime)) \log \cpr{\nu(x^\prime)} \\
& \qquad - \sum_{\substack{ y, y^\prime \in \Y, \\ x, x^\prime \in \X}} \pi_{\r{1}}((x,y), (x^\prime, y^\prime)) \log\cpr{\bar{\xi}(y,y^\prime)} \\
& = -H(\pi_{\r{1}}) - 2 \sum_{\substack{ y, y^\prime \in \Y, \\ x, x^\prime \in \X}} \pi_{\r{1}}((x,y), (x^\prime, y^\prime)) \log (\nu(x^\prime)) \\
& \qquad - \frac{1}{\b}  \sum_{ y, y^\prime \in \Y} \bvr(y,y^\prime) \log\left(\bar{\xi}(y,y^\prime)\right). \numberthis
\end{align*}
Together with \eqref{eqn:pi-1-step1},  \eqref{eqn:iid-pf-kxibar}, and \eqref{eqn:pi-1-step1}, we arrive at \eqref{eqn:iid-pf-pi-1-simplify}.
\end{proof}

\begin{lemma}
\label{lemma:jh-recursion}
Let $\beta > 0$. Let $\rho \in \PUB$  and suppose that $\EDLDRinf < \infty$. If $J_1(\r{1}) > -\infty$, then
\begin{align*}
\Sigma_{\bvr, \r{o}}(\rho) = J_1(\r{1}) -  \sum_{h \geq 2} \sqpr{ H(\rho_{h} \| \rho^*_{h}) - \frac{\b}{2} H(\pi_{\rho_{h}} \| \pi_{\rho^*_{h}})}.
\end{align*}
\end{lemma}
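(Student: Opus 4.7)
The plan is to leverage the definition $\Sigma_{\bvr, \r{o}}(\rho) = \lim_{h\to\infty} J_h(\r{h})$, together with the assumption $J_1(\r{1}) > -\infty$ and the finiteness guaranteed by Lemma \ref{lemma:finiteness}, in order to rewrite the limit as a telescoping sum
\[
\Sigma_{\bvr, \r{o}}(\rho) \;=\; J_1(\r{1}) \;+\; \sum_{h \geq 2}\bigl[J_h(\r{h}) - J_{h-1}(\r{h-1})\bigr].
\]
This reduces the assertion of the lemma to the per-level identity
\[
J_h(\r{h}) - J_{h-1}(\r{h-1}) \;=\; -\Bigl[\,H(\r{h} \,\|\, \Rstar{h}) \;-\; \tfrac{\b}{2}\,H(\PIR{h}\,\|\,\PIRstar{h})\,\Bigr], \qquad h \geq 2,
\]
which is what I would verify directly.

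To prove the per-level identity, I would expand both sides. The left-hand side unfolds directly from \eqref{eqn:jh} as $[H(\r{h}) - H(\r{h-1})] - \tfrac{\b}{2}[H(\PIR{h}) - H(\PIR{h-1})] - \Delta_h$, where $\Delta_h$ is the telescoped factorial sum $\sum_{\tau,\tau'}\Exp_{\r{h}}[\log (E_h(\tau,\tau')(\TB)!)] - \sum_{\tau_0,\tau_0'}\Exp_{\r{h-1}}[\log (E_{h-1}(\tau_0,\tau_0')(\TB)!)]$. The right-hand side unfolds via the product formula \eqref{eqn:Rstarh}: the chain rule applied to $\Rstar{h}(\tau) = \r{h-1}(\tau_{h-1})\prod_v \PU{h-1}(\cdots)$ yields
\[
H(\r{h}\,\|\,\Rstar{h}) \;=\; -H(\r{h}) + H(\r{h-1}) - \Exp_{\r{h}}\!\left[\sum_{v \in N_o(\TB)} \log \PU{h-1}(\TBOV_{h-2},\TBVO_{h-2})(\TBOV_{h-1})\right],
\]
and an analogous expansion holds for $H(\PIR{h}\,\|\,\PIRstar{h})$, where Lemma \ref{lemma:preliminary-prob-prop}\ref{lemma:PIRRstarMarginal} ensures that the first marginals of $\PIR{h}$ and $\PIRstar{h}$ coincide so only a conditional log-density survives.

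The crux is to identify the combinatorial difference $\Delta_h$ with the resulting $\log \PU{h-1}$ expectations. For this I would invoke the explicit representation of $\PU{h-1}(\tau,\tau')(\TBdef)$ promised in \eqref{eqn:PU}, which factors as a multinomial coefficient $E_{h-1}(\tau_0,\tau_0')!\,/\,\prod_{(\tau,\tau')\succeq(\tau_0,\tau_0')} E_h(\tau,\tau')!$ times a ratio of the covariance measures $\PIR{h}/\PIR{h-1}$. The multinomial identity then converts $\Delta_h$ precisely into the sum of $\log \PU{h-1}$ terms, up to residual entropic contributions that telescope against $[H(\PIR{h})-H(\PIR{h-1})]$ through Lemma \ref{lemma:PI-SBR}. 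Throughout, unimodularity via the mass transport principle and Ulam--Harris--Neveu exchangeability, in the style of the manipulations carried out in Section \ref{sec:alternate-form-1}, will let me interchange sums over neighbors with expectations under $\PIR{h}$ and to symmetrize contributions involving $\TBVO_{h-1}$.

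The main obstacle I anticipate is threefold: (i) extracting the multinomial factor cleanly from the yet-to-be-introduced formula \eqref{eqn:PU} for $\PU{h-1}$, since that formula sits on the quotient $\Tinf$ of unlabelled trees and the combinatorial weights must be traced with the correct Jacobian from the Ulam--Harris--Neveu labelling; (ii) bookkeeping the asymmetry between the forward subtree $\TBOV$, which cleanly extends with $h$, and the ambient-minus-$v$ subtree $\TBVO$, whose depth-$h$ refinement couples globally to $\r{h-1}$; and (iii) establishing absolute summability of the telescoped series so that the exchange $\lim_h J_h(\r{h}) = J_1(\r{1}) + \sum_{h \geq 2}(J_h(\r{h}) - J_{h-1}(\r{h-1}))$ is rigorously justified, which should follow from the monotonicity of $h \mapsto J_h(\r{h})$ together with the finiteness results of Lemma \ref{lemma:finiteness}.
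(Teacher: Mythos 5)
Your proposal is correct and follows essentially the same route as the paper: the paper's proof consists precisely of the one-step recursion $J_h(\r{h}) = J_{h-1}(\r{h-1}) - \sqpr{H(\r{h}\,\|\,\Rstar{h}) - \frac{\b}{2}H(\PIR{h}\,\|\,\PIRstar{h})}$ for $h \geq 2$ (obtained by observing that the computations of \cite[Remark 5.13]{BorCap15} carry over verbatim to the marked case), followed by exactly your telescoping step using the definition \eqref{eqn:sigma} of $\Sigma$ and the finiteness supplied by Lemma \ref{lemma:finiteness}. The only difference is that you propose to re-derive the recursion directly from \eqref{eqn:Rstarh} and \eqref{eqn:PU} rather than import it; that is legitimate, but note that \eqref{eqn:PU} does not literally contain the multinomial factor you describe --- the factorial terms must instead be matched against the counts $E_h$ appearing in the last term of \eqref{eqn:jh} --- so the bookkeeping at that step is the (nontrivial) content of the cited computation rather than an immediate identity.
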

\begin{proof}
By repeating the computations in \cite[Remark 5.13]{BorCap15} verbatim for the marked case, we can deduce the relation
\begin{align*}
J_{h}(\rho_{h}) = J_{h-1}(\r{h-1})  - \sqpr{ H(\rho_{h} \| \rho^*_{h}) - \frac{\b}{2} H(\pi_{\rho_{h}} \| \pi_{\rho^*_{h}})}, \quad h \geq 2.
\end{align*}
(In particular,  by Lemma \ref{lemma:finiteness}, we have $J_h(\r{h}) > -\infty$ for all $h \geq 2$.) Together with the definition of $\Sigma$ in \eqref{eqn:sigma}, the conclusion follows.
\end{proof}

\begin{lemma}
\label{lemma:j1-h-relation-CM}
Let $\rho \in \PUK$ with $\rho_{o, \text{deg}} = \alpha$. Then
\begin{align*}
J_1(\r{1}) = H(\r{o} \| \nu) & + \frac{\k}{2} H\cpr{\frac{\kvr}{\k} \biggr\| \bar{\xi}}  - \Exp_{\alpha}[\log \cpr{\NO !}] + H(\alpha) + \vec{s}(\kvr) - 2s(\k)  \\
& +  H(\r{o}) +  \cpr{H(\r{1} \| \etaa_1) - \frac{\k}{2} H(\PIR{1} \| \PIetaalp) }. \numberthis \label{eqn:j1-h-relation-cm}
\end{align*}
\end{lemma}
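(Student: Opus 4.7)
The plan is to unfold the definition of $J_1(\r{1})$ from \eqref{eqn:jh} (with $h=1$, $\b=\k$) and eliminate the absolute entropies $H(\r{1})$ and $H(\PIR{1})$ in favor of the relative entropies $H(\r{1}\|\etaa_1)$ and $H(\PIR{1}\|\PIetaalp)$ by means of the elementary identity $H(\mu) = -H(\mu\|\eta) - \Exp_\mu[\log \eta]$. This reduces the problem to computing $\Exp_{\r{1}}[\log \etaa_1(\TB)]$ and $\Exp_{\PIR{1}}[\log \PIetaalp]$ explicitly and reconciling the result with the combinatorial terms $\Exp_\alpha[\log \dego!]$, $\vec{s}(\kvr)$, $-2s(\k)$, and $\frac{\k}{2}H(\kvr/\k\|\bar\xi)$ on the right-hand side.

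The central calculation is the explicit form of $\etaa_1$ on unlabeled trees. Under the Ulam-Harris-Neveu scheme of Section \ref{sec:labeling}, for $\tau\in\T{1}$ with $\deg_\tau(o)=d$ and root mark $x_o$,
\begin{align*}
\etaa_1(\tau) = \alpha(d)\,\nu(x_o)\,\frac{d!}{\prod_{(\tau_v,\tau_o)\in(\X\times\Y)^2} E_1(\tau_v,\tau_o)(\tau)!}\,\prod_{v\in N_o(\tau)}\nu(x_v)\,\bar\xi(y_{(v,o)},y_{(o,v)}),
\end{align*}
the multinomial factor accounting for the multiplicity of child-types. Taking logarithms and integrating against $\r{1}$, and using the identification $\PIR{1}=\SBRM{1}$ from Lemma \ref{lemma:PI-SBR} together with the exchangeability of Remark \ref{remark:exchageability} to rewrite the child-sum contributions as $\k$-weighted expectations under $\SBRM{1}$, I would obtain an expansion of $\Exp_{\r{1}}[\log\etaa_1(\TB)]$ containing exactly the terms $-H(\alpha)$, $-H(\r{o})-H(\r{o}\|\nu)$, $\Exp_\alpha[\log\dego!]$, the combinatorial sum $-S:=-\sum_{\tau,\tau'}\Exp_{\r{1}}[\log E_1(\tau,\tau')(\TB)!]$ appearing in $J_1$, together with $\k\sum_x\SBRMOne{1}(x)\log\nu(x)$ and $\sum_{y,y'}\kvr(y,y')\log\bar\xi(y,y')$. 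The parallel computation for $\Exp_{\PIR{1}}[\log\PIetaalp]$ uses the factorization $\PIetaalp((x,y),(x',y'))=\nu(x)\nu(x')\bar\xi(y,y')$ (Remark \ref{remark:h=1-PIRh} plus the independence structure of $\etaa_1$), and relies crucially on the symmetry $\SBRMOne{1}(\cdot)=\SBRMZero{1}(\cdot)$ afforded by admissibility of $\r{1}$, which holds since $\rho\in\PUK$ (Remark \ref{remark:admissibility}).

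Substituting both expansions into $J_1(\r{1}) = -s(\k) + H(\r{1}) - \tfrac{\k}{2}H(\PIR{1}) - S$, the $S$ contributions cancel, the $\k\sum_x\SBRMOne{1}(x)\log\nu(x)$ pieces coming from $H(\r{1}\|\etaa_1)$ cancel exactly against their counterparts from $\tfrac{\k}{2}H(\PIR{1}\|\PIetaalp)$ (this is where admissibility is essential), and Lemma \ref{lemma:pi-1-simplify}(ii) with $\b=\k$ converts $\sum_{y,y'}\kvr(y,y')\log\bar\xi(y,y')$ into $-\k H(\kvr/\k\|\bar\xi)+\sum\kvr\log\kvr-\k\log\k$, producing the $\tfrac{\k}{2}H(\kvr/\k\|\bar\xi)$ term on the right. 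The remaining purely numerical contributions collapse via the elementary identities $s(\k)+\tfrac{\k}{2}\log\k=\tfrac{\k}{2}$ and $\vec{s}(\kvr)+\tfrac{1}{2}\sum_{y,y'}\kvr\log\kvr=\tfrac{\k}{2}$, both immediate from the definitions in \eqref{eqn:def-s-kvec} and the normalization $\sum\kvr=\k$ from \eqref{eqn:kvr-sum}, yielding the claimed identity.

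The main obstacle is bookkeeping: the expansion produces roughly a dozen terms with various factors of $\k$ and $\k/2$, and their correct pairing relies on two structural ingredients. The first is the admissibility-driven symmetry of $\SBRM{1}$, without which the two $\sum_x\SBRMOne{1}(x)\log\nu(x)$ contributions arising from the two marginals of $\PIetaalp$ would not combine coherently with the single ``child-sum'' appearing in $\log\etaa_1$. The second is the clean match between the combinatorial $-\log\prod E_1!$ factor in $\etaa_1$ and the defining combinatorial term in $J_1$, which is what allows the only non-relative-entropy combinatorial quantity to cancel outright.
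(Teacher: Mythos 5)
Your plan is, in substance, the paper's own proof run in the opposite direction: the paper expands $H(\r{1}\|\etaa_1)-\frac{\k}{2}H(\PIR{1}\|\PIetaalp)$ using exactly the ingredients you list --- the explicit multinomial form of $\etaa_1$ (the factor $d!/\prod E_1!$ times i.i.d.\ mark weights), the identity expressing child-sums as $\k$-weighted expectations under $\PIR{1}$, the product form $\PIetaalp((x,y),(x',y'))=\nu(x)\nu(x')\bar\xi(y,y')$ combined with admissibility of $\r{1}$, and Lemma \ref{lemma:pi-1-simplify} --- and then recognizes $-J_1(\r{1})$ in the result. Your cancellations (the $\sum\Exp_{\r{1}}[\log E_1!]$ term, the $\nu$-terms via symmetry of $\PIR{1}$, and the absorption of $\frac{\k}{2}\log\k$ and $\frac12\sum\kvr\log\kvr$ into $s(\k)$ and $\vec{s}(\kvr)$) are precisely the ones that occur there, and your two numerical identities are correct.

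One correction, however, concerning the sign of the final term: carried out as you describe, the substitutions $H(\r{1})=-H(\r{1}\|\etaa_1)-\Exp_{\r{1}}[\log\etaa_1(\TB)]$ and $-\frac{\k}{2}H(\PIR{1})=\frac{\k}{2}H(\PIR{1}\|\PIetaalp)+\frac{\k}{2}\Exp_{\PIR{1}}[\log\PIetaalp]$ make the relative-entropy difference enter $J_1$ with a minus sign, so the computation yields
\begin{align*}
J_1(\r{1}) ={}& H(\r{o}\|\nu) + \frac{\k}{2} H\!\left(\frac{\kvr}{\k}\,\middle\|\,\bar{\xi}\right) - \Exp_{\alpha}[\log(\dego!)] + H(\alpha) + \vec{s}(\kvr) - 2s(\k) + H(\r{o}) \\
& - \left(H(\r{1}\|\etaa_1) - \frac{\k}{2} H(\PIR{1}\|\PIetaalp)\right),
\end{align*}
not the plus-sign version printed in \eqref{eqn:j1-h-relation-cm}. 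This minus-sign identity is what the paper's own proof actually derives (its final display reads $H(\r{1}\|\etaa_1)-\frac{\k}{2}H(\PIR{1}\|\PIetaalp)=-J_1(\r{1})+\cdots$) and what the application in Section \ref{sec:icm-itildecm-proof} requires, where it produces $\calboldI^\CM(\rho)=H(\r{1}\|\etaa_1)-\frac{\k}{2}H(\PIR{1}\|\PIetaalp)+J_1(\r{1})-\Sigma_{\kvr,\r{o}}(\rho)$; the plus sign in the printed statement is a typo. So your argument is sound, but you should state the conclusion with the minus sign rather than asserting that the bookkeeping collapses to the display as printed --- if it really did, a sign was dropped when eliminating $H(\r{1})$ or $H(\PIR{1})$.
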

\begin{proof}
Since $\rho_{o, \text{deg}} = \alpha$ and $\alpha$ has finite support, we have $H(\alpha) < \infty$, $\Exp_{\alpha}[\log \cpr{\NO !}] < \infty$ and $H(\r{1} \| \etaa_1) < \infty$. Together with the fact that both $\X$ and $\Y$ are finite sets, it follows that all the terms on the right-hand side of \eqref{eqn:j1-h-relation-cm} are finite. Using \eqref{eqn:def-renelt}  and \eqref{eqn:def-entropy},  we have
\begin{align}
H(\r{1} \| \etaa_1) = -H(\r{1}) - \sum_{\tau \in \T{1}} \r{1}(\tau) \log \cpr{\etaa_1(\tau)}.
\label{eqn:h-simplify1-cm}
\end{align}
Since $\etaa$ is the law of the size-biased Galton-Watson tree with offspring distribution $\alpha$ with i.i.d.\,vertex marks with law $\nu$ and i.i.d edge marks with law $\xi$, both independent of each other and the  underlying unmarked tree (see Remark \ref{remark:local-convergence}), for any $\tau \in \T{1}$ with root mark $x$, we have, with $E_1$ given by \eqref{eqn:def-Eh},
\begin{align*}
\etaa_1(\tau) = \alpha(\text{deg}_\tau(o)) \,  
\sqpr{\text{deg}_\tau(o)}! \, \nu(x)  \prod_{\substack{x^\prime \in \X \\ y,y^\prime \in \Y}} \frac{(\nu(x^\prime) \bar{\xi}(y,y^\prime))^{E_1((x,y), (x^\prime, y^\prime))(\tau)}}{\sqpr{E_1((x,y), (x^\prime, y^\prime))(\tau)}!}.
\end{align*}
This implies
\begin{align}
-\sum_{\tau\in \T{1}} \r{1}(\tau) \log\cpr{\etaa_1(\tau)} = I_1 + I_2 + I_3,
\label{eqn:ent-simplify-cm}
\end{align}
where, using the identity $\sum_{\tau \in \T{1} : \tau_o = x} \r{1}(\tau) = \r{o}(x)$ and  recalling that $\rho_{o, \text{deg}} = \alpha$, we have 
\begin{align*}
I_1 &:= -\sum_{x \in \X} \r{o}(x) \log \cpr{\nu(x)} - \Exp_{\alpha}\sqpr{\log \cpr{\NO !}}  - \Exp_{\alpha}\sqpr{ \alpha(\NO)} \\
& = H(\r{o}) + H(\r{o} \| \nu) -  \Exp_{\alpha}[\log \cpr{\NO ! }] + H(\alpha), \\
I_2 & := -\sum_{\substack{y, y^\prime \in \Y, \\ x, x^\prime \in \X}} \cpr{\sum_{\substack{\tau \in \T{1}: \\ \tau_o = x}}\r{1}(\tau) E_1((x, y), (x^\prime, y^\prime))(\tau)} \log(\bar{\xi}(y,y^\prime) \nu(x^\prime)) , \numberthis \label{eqn:proof-I_2}\\
I_3 & := \sum_{\substack{y, y^\prime \in \Y, \\ x, x^\prime \in \X}} \sum_{\substack{\tau \in \T{1} :\\ \tau_o = x}} \r{1}(\tau) \log \cpr{ \sqpr{E_1((x, y), (x^\prime, y^\prime))(\tau)}!}.\numberthis \label{eqn:proof-I_3}
\end{align*}

We first consider $I_2$. By  \eqref{eqn:PIRh}, \eqref{eqn:def-Eh} and the fact that $\Exp_{\alpha}\sqpr{\NO} = \k$, we have  \\  $\sum_{\substack{\tau \in \T{1}: \\ \tau_o = x}}\r{1}(\tau) E_1((x, y), (x^\prime, y^\prime))(\tau) = \k \PIR{1}((x,y), (x^\prime, y^\prime))$. Together with the definition of $\kvr$ in \eqref{eqn:def-kvr}, this implies
\begin{align*}
I_2 & =  - \k \sum_{\substack{ y, y^\prime \in \Y, \\ x, x^\prime \in \X}}  \pi_{\r{1}}((x,y), (x^\prime, y^\prime)) \log (\bar{\xi}(y,y^\prime) \nu(x^\prime))   \\
& = -  \sum_{y,y^\prime \in \Y} \kvr(y,y^\prime) \log(\bar{\xi}(y,y^\prime)) - \k \sum_{\substack{y, y^\prime \in \Y \\ x, x^\prime \in \X}} \pi_{\r{1}}((x,y), (x^\prime, y^\prime)) \log (\nu(x^\prime)). \numberthis \label{eqref:proof_I_2-simplify}
\end{align*}
Combining the last two displays with \eqref{eqn:ent-simplify-cm} and \eqref{eqn:h-simplify1-cm}, we see that 
\begin{align}
H(\r{1} \| \etaa_1)&  = -H(\r{1}) +  H(\rho_{o} \| \nu) +  H(\rho_{o}) - \Exp_{\alpha}\sqpr{\log\cpr{ \NO !}} + H(\alpha) \nonumber \\
& \qquad - \sum_{y,y^\prime \in \Y} \kvr(y,y^\prime) \log(\bar{\xi}(y,y^\prime)) - \k \sum_{\substack{ y, y^\prime \in \Y, \\ x, x^\prime \in \X}} \pi_{\r{1}}((x,y), (x^\prime, y^\prime))  \log (\nu(x^\prime))  \nonumber \\
& \qquad + \sum_{\substack{ y, y^\prime \in \Y, \\ x, x^\prime \in \X}} \Exp_{\r{1}}\sqpr{\log \cpr{\sqpr{E_1((x,y), (x^\prime, y^\prime))(\TB)}!}}.
\label{eqn:h-simplify2-cm}
\end{align}
Using  the expression in \eqref{eqn:iid-pf-kxibar} for the sixth term on the right-hand side above, the identity \eqref{eqn:iid-pf-pi-1-simplify} and  the definition of $J_1$ in \eqref{eqn:jh}, we obtain
\begin{align*}
H(\r{1} \| \etaa_1) - \frac{\k}{2} H(\pi_{\r{1}} \| \pi_{\etaa_1}) &  = -H(\r{1}) + \frac{\k}{2}H(\pi_{\r{1}}) + H(\r{o} \| \nu) + H(\r{o}) - \Exp_{\alpha}\sqpr{\log \cpr{\NO !}} + H(\alpha)  \\
& \qquad  +\frac{\k}{2} H\cpr{\frac{\kvr}{\k} \biggr\| \bar{\xi}}  + \frac{\k}{2} \log \k -\frac{1}{2} \sum_{y,y^\prime \in \Y} \kvr(y, y^\prime) \log(\kvr(y, y^\prime)) \\
& \qquad +  \sum_{\substack{ y, y^\prime \in \Y, \\ x, x^\prime \in \X}} \Exp_{\r{1}}\sqpr{\log \cpr{\sqpr{E_1((x,y), (x^\prime, y^\prime))(\TB)}!}}\\
& = -J_1(\r{1}) +  H(\r{o} \| \nu) + H(\r{o}) - \Exp_{\alpha}\sqpr{\log \cpr{\NO !}} + H(\alpha) \\
& \qquad +\frac{\k}{2} H\cpr{\frac{\kvr}{\k} \biggr\| \bar{\xi}}+ \vec{s}(\kvr) - 2s(\k).
\end{align*}
This proves \eqref{eqn:j1-h-relation-cm}.
\end{proof}

\subsubsection{Proof of $\calboldI^\CM = \calItilde^\CM$}
\label{sec:icm-itildecm-proof}
By the definitions in \eqref{eqn:I-CM-entropy-form} and \eqref{eqn:I-CM-iid-old} we have $\calItilde^\CM(\rho) =\calboldI^\CM(\rho)= \infty$ whenever either $\rho_{o, \text{deg}} \neq \alpha$ or $\rho \notin \PUK$. On the other hand, when $\rho \in \PUK$ with $\rho_{o, \text{deg}} = \alpha$,  Lemma \ref{lemma:j1-h-relation-CM} together with \eqref{eqn:I-CM-entropy-form}   implies that 
\begin{align*}
\calboldI^\CM(\rho) =   H(\r{1} \| \etaa_1) - \frac{\k}{2} H(\pi_{\r{1}} \| \pi_{\etaa_1}) + J_1(\rho) - \Sigma_{\kvr, \r{o}}(\rho).
\end{align*}
Since $\r{1}$ has finite support (because $\rho_{o, \text{deg}} = \alpha$), from the definition of $J_1(\r{1})$ in \eqref{eqn:jh}, it follows that  $J_1(\r{1}) >-\infty$. Hence, the last display together with  Lemma \ref{lemma:jh-recursion} and Lemma \ref{lemma:PI-SBR} imply $\calboldI^\CM(\rho) = \calItilde^\CM(\rho)$. 
\qed

\subsection{Proof of $\calboldI^\FE = \calItilde^\FE$ and $\calboldI^\ER = \calItilde^\ER$}
\label{sec:I-fe-er-equivalence-proof}
\subsubsection{Preliminary lemmas}
We first prove a preliminary lemma, which shows a similar statement as in Lemma \ref{lemma:j1-h-relation-CM} in the case of the $\CM$ random graph sequence.
\begin{lemma}
\label{lemma:j1-h-relation}
Let $\beta > 0$. Let $\rho \in \PUB$ and suppose that $\EDLDRinf  < \infty$. Then
\begin{align}
J_1(\r{1}) = \vec{s}(\bvr) + H(\r{o}) + H(\r{o} \| \nu) + \frac{\b}{2} H\cpr{\frac{\bvr}{\b} \biggr\| \bar{\xi} } - \left(H(\r{1} \| \etab{\b}_1) - \frac{\b}{2} H(\pi_{\r{1}} \| \PIetab{\b})\right).
\label{eqn:j1-h-relation}
\end{align}
\end{lemma}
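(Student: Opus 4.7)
The plan is to mimic the proof of Lemma \ref{lemma:j1-h-relation-CM} almost verbatim, with the only structural change being that the root-degree factor $\alpha(\text{deg}_\tau(o)) \cdot [\text{deg}_\tau(o)]!$ appearing in $\etaa_1(\tau)$ is replaced by the Poisson factor $e^{-\b} \b^{\text{deg}_\tau(o)}$ appearing in $\etab{\b}_1(\tau)$. Concretely, since under $\etab{\b}$ the root has $\mathrm{Poi}(\b)$ children, each with i.i.d.\ vertex mark of law $\nu$ and i.i.d.\ edge mark of law $\bar{\xi}$, a direct computation (summing over labelings corresponding to a given unlabeled tree $\tau$ with root mark $x$) gives
\begin{align*}
\etab{\b}_1(\tau) = e^{-\b} \b^{\text{deg}_\tau(o)} \nu(x) \prod_{\substack{x' \in \X \\ y, y' \in \Y}} \frac{(\nu(x') \bar{\xi}(y, y'))^{E_1((x,y),(x',y'))(\tau)}}{[E_1((x,y),(x',y'))(\tau)]!}.
\end{align*}
Note that $\etab{\b}_1$ has full support on $\T{1}$, so $H(\r{1} \| \etab{\b}_1)$ is always well-defined (and finiteness of all terms below follows from $\EDLDRinf < \infty$ together with Lemma \ref{lemma:finiteness}).

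Taking $-\log$ and integrating against $\r{1}$, and using $\Exp_{\r{1}}[\text{deg}_\TB(o)] = \b$, the identity $-\Exp_{\r{1}}[\log \nu(\bfX_o)] = H(\r{o}) + H(\r{o} \| \nu)$, and the defining relation $\Exp_{\r{1}}[E_1((x,y),(x',y'))(\TB)] = \b \, \PIR{1}((x,y),(x',y'))$ from \eqref{eqn:PIRh}, I obtain
\begin{align*}
H(\r{1} \| \etab{\b}_1) &= -H(\r{1}) + \b - \b \log \b + H(\r{o}) + H(\r{o} \| \nu) - \b \sum_{(x,y),(x',y')} \PIR{1} \log \nu(x') \\
&\qquad - \sum_{y,y'} \bvr(y,y') \log \bar{\xi}(y,y') + \sum_{(x,y),(x',y')} \Exp_{\r{1}}[\log([E_1(\cdot,\cdot)(\TB)]!)].
\end{align*}
This is the exact analog of the intermediate identity \eqref{eqn:h-simplify2-cm} in the proof of Lemma \ref{lemma:j1-h-relation-CM}, but without the $-\Exp_\alpha[\log \NO!] + H(\alpha)$ terms (which in the $\CM$ case arose from the factorial-and-degree-distribution factors) and with an additional $\b - \b\log\b$ from the Poisson normalization.

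Next I substitute the identity \eqref{eqn:iid-pf-kxibar} to rewrite $-\sum \bvr \log \bar{\xi}$ in terms of $\b H(\bvr/\b \| \bar{\xi})$, and separately multiply \eqref{eqn:iid-pf-pi-1-simplify} by $\b/2$ to express $\frac{\b}{2} H(\PIR{1} \| \PIetab{\b})$ explicitly. Subtracting these two expressions, the terms $-\b \sum \PIR{1} \log \nu(x')$ cancel cleanly; what remains can be grouped (using $\vec{s}(\bvr) = \b/2 - \tfrac{1}{2} \sum \bvr \log \bvr$ and $s(\b) = \b/2 - \b/2 \log \b$) into
\begin{align*}
H(\r{1} \| \etab{\b}_1) - \frac{\b}{2} H(\PIR{1} \| \PIetab{\b}) = -\big[-s(\b) + H(\r{1}) - \tfrac{\b}{2} H(\PIR{1}) - \textstyle\sum \Exp_{\r{1}}[\log E_1!]\big] + H(\r{o}) + H(\r{o} \| \nu) + \tfrac{\b}{2} H(\bvr/\b \| \bar{\xi}) + \vec{s}(\bvr).
\end{align*}
Recognizing the bracketed expression as precisely $J_1(\r{1})$ from \eqref{eqn:jh} and solving for $J_1(\r{1})$ yields \eqref{eqn:j1-h-relation}. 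The only mildly delicate point — the main obstacle if any — is tracking the bookkeeping of the constants $\b \log \b$, $\log \b$ and $\b$ as they flow out of the Poisson normalization, through \eqref{eqn:iid-pf-kxibar}--\eqref{eqn:iid-pf-pi-1-simplify}, and into $s(\b)$ and $\vec{s}(\bvr)$; but no new probabilistic input beyond Lemma \ref{lemma:pi-1-simplify} is needed.
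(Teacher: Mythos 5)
Your proposal is correct and follows essentially the same route as the paper: expand $-\log\etab{\b}_1(\tau)$ (your degree-factored form agrees with the paper's product-of-independent-Poissons expression since $\sum_{x',y,y'}\bar{\xi}(y,y')\nu(x')=1$ and $\sum E_1 = \text{deg}_\tau(o)$), integrate against $\r{1}$ using \eqref{eqn:PIRh}, then substitute \eqref{eqn:iid-pf-kxibar} and \eqref{eqn:iid-pf-pi-1-simplify} and regroup via the definitions of $J_1$, $s(\b)$ and $\vec{s}(\bvr)$, exactly as in the paper's adaptation of the $\CM$ argument. The only cosmetic differences are that the paper additionally spells out the (here vacuous, given $\EDLDRinf<\infty$) degenerate case $J_1(\r{1})=-\infty$, and your aside that $\etab{\b}_1$ has full support is not needed (and only true when $\nu$ and $\bar{\xi}$ have full support) — relative entropy is well defined in $[0,\infty]$ regardless.
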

\begin{proof}
The proof proceeds similar to that of Lemma \ref{lemma:j1-h-relation-CM} in the $\CM$ case, and we shall only indicate the main differences. 

Since $\EDLDRinf < \infty$, by Lemma \ref{lemma:finiteness}, we have $H(\r{1}) < \infty$. Using \eqref{eqn:def-renelt} and \eqref{eqn:def-entropy}, we get
\begin{align}
H(\r{1} \| \etab{\b}_1) = -H(\r{1}) - \sum_{\tau \in \T{1}} \r{1}(\tau) \log \cpr{\etab{\b}_1(\tau)},
\label{eqn:h-simplify1}
\end{align}
For any $\tau \in \T{1}$ with root mark $x$, using the definition  of $\etab{\b}_1$, we have
\begin{align*}
\etab{\b}_1(\tau) =  \nu(x) \prod_{\substack{y,y^\prime \in \Y, \\ x^\prime \in \X} } \frac{\exp\{-\b \bar{\xi}(y, y^\prime) \nu(x^\prime)\}   (\b \bar{\xi}(y, y^\prime) \nu(x^\prime))^{E_1((x, y), (x^\prime, y^\prime))(\tau)}}{\sqpr{E_1((x, y), (x^\prime, y^\prime))(\tau)}!}.
\end{align*}
Using this expression, similar to \eqref{eqn:ent-simplify-cm}, we arrive at
\begin{align}
-\sum_{\tau \in \T{1}} \r{1}(\tau) \log\cpr{\etab{\b}_1(\tau)} = I_1 + I_2 + I_3,
\label{eqn:ent-simplify}
\end{align}
where $I_2$ and $I_3$ are as given in \eqref{eqn:proof-I_2} and \eqref{eqn:proof-I_3}, respectively, with $\k$ replaced by $\b$, and 
\begin{align*}
I_1 &:= -\sum_{x \in \X} \r{o}(x) \log \cpr{\nu(x)} + \sum_{x \in \X} \r{o}(x) \cpr{ \sum_{\substack{y, y^\prime \in \Y, \\ x^\prime \in \X} } \b \bar{\xi}(y,y^\prime)\nu(x^\prime) } - \b \log \b.
\end{align*}
Note that, since $\X$ and $\Y$ are finite sets, $I_1 < \infty$. Also, since $E_1((x, y), (x^\prime, y^\prime))(\tau) \leq \text{deg}_\tau(o)$, and since $\Exp_{\rho}[\dego] = \b < \infty$, we have $I_2 < \infty$. If $\rho$ is such that $J_1(\r{1}) = - \infty$, then from the definition of $J_1$ in \eqref{eqn:jh} and the finiteness properties established in Lemma \ref{lemma:finiteness},  it follows  that $I_3 = \infty$. Therefore, from \eqref{eqn:ent-simplify} and \eqref{eqn:h-simplify1}, we get $H(\r{1} \| \etab{\b}_1) = \infty$. Hence, both sides of \eqref{eqn:j1-h-relation} evaluates to $-\infty$ when $J_1(\r{1}) = -\infty$. 
Therefore, in the rest of the proof, we may assume that $J_1(\r{1}) > - \infty$, which, by Lemma \ref{lemma:finiteness}, yields $I_3 < \infty$. 

Consider $I_1$ in the previous display. Since $\sum_{x \in \X} \r{o}(x) \sum_{\substack{y, y^\prime \in \Y, \\ x^\prime \in \X} } \b \bar{\xi}(y,y^\prime)\nu(x^\prime) = \b$, using  \eqref{eqn:def-entropy} and \eqref{eqn:def-renelt}, we get
\begin{align*}
I_1 = H(\rho_{o} \| \nu) +  H(\rho_{o}) + \b - \b \log \b.
\end{align*}
Together with the expression of $I_2$ from \eqref{eqref:proof_I_2-simplify} with $\k$ replaced by $\b$, the definition of $I_3$ from \eqref{eqn:proof-I_3}, \eqref{eqn:ent-simplify} and \eqref{eqn:h-simplify1},  we obtain
\begin{align}
H(\r{1} \| \etab{\b}_1)&  = -H(\r{1}) +  H(\rho_{o} \| \nu) +  H(\rho_{o}) + \b  - \b \log \b \nonumber \\
& \qquad - \sum_{y,y^\prime \in \Y} \bvr(y,y^\prime) \log(\bar{\xi}(y,y^\prime)) - \b \sum_{\substack{ y, y^\prime \in \Y, \\ x, x^\prime \in \X}}  \log (\nu(x^\prime)) \pi_{\r{1}}((x,y), (x^\prime, y^\prime)) \nonumber \\
& \qquad + \sum_{\substack{ y, y^\prime \in \Y, \\ x, x^\prime \in \X}} \Exp_{\r{1}}\sqpr{\log \cpr{\sqpr{E_1((x,y), (x^\prime, y^\prime))(\TB)}!}}.
\label{eqn:h-simplify2}
\end{align}
Substituting for the sixth term on the right-hand side above from \eqref{eqn:iid-pf-kxibar} of Lemma \ref{lemma:pi-1-simplify}, using the identity \eqref{eqn:iid-pf-pi-1-simplify} for $H(\PIR{1} \| \PIetab{\b})$ from Lemma \ref{lemma:pi-1-simplify}, using the definitions of $J_1$ in \eqref{eqn:jh} and $s(\b)$ and $\vec{s}(\bvr)$ from Definition \ref{def:s-s-bvr}, it follows that
\begin{align*}
H(\r{1} \| \etab{\b}_1) - \frac{\b}{2} H(\pi_{\r{1}} \| \PIetab{\b}) = -J_1(\r{1}) +  H(\r{o} \| \nu) + H(\r{o}) +\frac{\b}{2} H\cpr{\frac{\bvr}{\b} \biggr\| \bar{\xi}} + \vec{s}(\bvr).
\end{align*}
Rearranging the above yields \eqref{eqn:j1-h-relation}, and thus completes the proof of the lemma. 
\end{proof}

Next, we show that $\calboldI_\b =  \calItilde_{\b, \etab{\b}}$, where $\calboldI_\b$ and $\calItilde_{\b, \etab{\b}}$ are defined in \eqref{eqn:I-entropy-form} and \eqref{eqn:calItilde-generic}, respectively.
\begin{lemma}
\label{lemma:new-old-fe-er}
    Let $\beta > 0$. We have $\calboldI_\b(\rho) = \calItilde_{\b, \etab{\b}}(\rho)$ for all $\rho \in \P(\Ginf)$.
\end{lemma}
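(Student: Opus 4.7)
The plan is to reduce the claim to a direct algebraic manipulation by combining Lemma \ref{lemma:j1-h-relation} (which expresses the depth-$1$ quantity $J_1(\r{1})$ in terms of $\etab{\b}$-based relative entropies) with Lemma \ref{lemma:jh-recursion} (which expresses the microstate entropy $\Sigma_{\bvr, \r{o}}(\rho)$ in terms of $J_1(\r{1})$ and depth-$h$ entropic differences for $h \geq 2$), and then using Lemma \ref{lemma:PI-SBR} to identify the covariance measures $\pi_{\r{h}}$ with the size-biased marginals $\SBRM{h}$.

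First I would dispose of the two trivial cases. If $\rho \notin \PUB$ (which subsumes $\Exp_{\rho}[\dego] \neq \b$), then $\calItilde_{\b, \etab{\b}}(\rho) = \infty$ by \eqref{eqn:calItilde-generic}, while Remark \ref{remark:sigma} gives $\Sigma_{\bvr, \r{o}}(\rho) = -\infty$, forcing $\calboldI_\b(\rho) = \infty$ via \eqref{eqn:I-entropy-form}. If $\rho \in \PUB$ but $\EDLDRinf = \infty$, then $\calItilde_{\b, \etab{\b}}(\rho) = \infty$ by definition, and since the root degree depends only on the depth-$1$ marginal, $\EDLDR{h} = \EDLDRinf = \infty$ for every $h \in \N$, so the definition \eqref{eqn:jh} of $J_h$ yields $J_h(\r{h}) = -\infty$ for all $h$. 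Hence $\Sigma_{\bvr, \r{o}}(\rho) = \lim_h J_h(\r{h}) = -\infty$ and $\calboldI_\b(\rho) = \infty$ as well.

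The main case is $\rho \in \PUB$ with $\EDLDRinf < \infty$. Using Lemma \ref{lemma:finiteness}, the pointwise bound $E_1(\tau,\tau')(\TB) \leq \dego$, and Stirling's estimate $\log(n!) = O(n\log n)$, I would first verify that every term in \eqref{eqn:jh} is finite, so $J_1(\r{1}) \in \R$. This justifies applying Lemma \ref{lemma:jh-recursion} to obtain
\begin{align*}
\Sigma_{\bvr, \r{o}}(\rho) = J_1(\r{1}) - \sum_{h \geq 2} \sqpr{H(\r{h} \| \Rstar{h}) - \frac{\b}{2} H(\pi_{\r{h}} \| \pi_{\Rstar{h}})},
\end{align*}
and applying Lemma \ref{lemma:j1-h-relation} to expand $J_1(\r{1})$ in terms of depth-$1$ quantities. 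Substituting both into \eqref{eqn:I-entropy-form}, the four leading terms $\vec{s}(\bvr)$, $H(\r{o})$, $H(\r{o} \| \nu)$, and $\frac{\b}{2} H(\bvr/\b \| \bar{\xi})$ cancel in pairs, leaving
\begin{align*}
\calboldI_\b(\rho) = \sqpr{H(\r{1} \| \etab{\b}_1) - \frac{\b}{2} H(\pi_{\r{1}} \| \PIetab{\b})} + \sum_{h \geq 2} \sqpr{H(\r{h} \| \Rstar{h}) - \frac{\b}{2} H(\pi_{\r{h}} \| \pi_{\Rstar{h}})}.
\end{align*}
Lemma \ref{lemma:PI-SBR}, applied to $\r{h}$, $\etab{\b}_1$, and $\Rstar{h}$ in turn, identifies $\pi_{\r{h}} = \SBRM{h}$, $\PIetab{\b} = \SBetaM$, and $\pi_{\Rstar{h}} = \SBRstarM{h}$, matching the right-hand side with \eqref{eqn:calItilde-generic}.

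The main obstacle I anticipate is the extended-real bookkeeping in the cancellation step, particularly when one of the leading terms $H(\r{o} \| \nu)$ or $\frac{\b}{2} H(\bvr/\b \| \bar{\xi})$ is infinite; this occurs exactly when $\r{1} \not\ll \etab{\b}_1$. In that degenerate subcase I would argue separately that both functionals equal $\infty$: on the $\calItilde$ side, the first summand is forced to be $+\infty$ since $H(\r{1} \| \etab{\b}_1) = \infty$; on the $\calboldI_\b$ side, the fact that $\Sigma_{\bvr, \r{o}}(\rho) \leq J_1(\r{1}) + \sum_{h \geq 2}\frac{\b}{2} H(\pi_{\r{h}} \| \pi_{\Rstar{h}})$ (from the recursion) combined with Lemma \ref{lemma:finiteness} shows that $\Sigma$ cannot absorb the divergent leading term, so \eqref{eqn:I-entropy-form} yields $\calboldI_\b(\rho) = \infty$. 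Once this case is handled, the remaining computation is mechanical.
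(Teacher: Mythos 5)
Your proposal follows the paper's own route: dispose of $\rho \notin \PUB$ via Remark \ref{remark:sigma} and of $\EDLDRinf = \infty$ via the definition of $J_h$ in \eqref{eqn:jh}, then in the main case combine Lemma \ref{lemma:jh-recursion} with Lemma \ref{lemma:j1-h-relation}, cancel the four leading terms of \eqref{eqn:I-entropy-form}, and identify the covariance measures with the size-biased marginals through Lemma \ref{lemma:PI-SBR}. Your observation that $J_1(\r{1})$ is automatically finite when $\rho \in \PUB$ and $\EDLDRinf < \infty$ (via Lemma \ref{lemma:finiteness}, $E_1 \leq \dego$, and $\log(n!) \leq n\log n$) is correct and in fact shows that the paper's separate sub-branch ``$J_1(\r{1}) = -\infty$'' in its Case III is vacuous, so your treatment of the degenerate situation directly through $\r{1} \not\ll \etab{\b}_1$ is a legitimate, arguably cleaner, reorganization of the same argument.

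Two points in that degenerate subcase are not adequately justified as written. First, to conclude $\calboldI_\b(\rho) = \infty$ you must rule out $\Sigma_{\bvr,\r{o}}(\rho) = +\infty$, and the bound you invoke, $\Sigma_{\bvr,\r{o}}(\rho) \leq J_1(\r{1}) + \sum_{h \geq 2} \tfrac{\b}{2} H(\pi_{\r{h}} \| \pi_{\Rstar{h}})$, does not do this: Lemma \ref{lemma:finiteness} only gives finiteness of each summand, and the series on the right may diverge to $+\infty$, making the bound vacuous. The correct quick fix is the monotonicity of $h \mapsto J_h(\r{h})$ (cited in the paper from \cite{DelAna19} in exactly this proof), which gives $\Sigma_{\bvr,\r{o}}(\rho) = \lim_h J_h(\r{h}) \leq J_1(\r{1}) < \infty$. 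Second, your claim that a leading term of \eqref{eqn:I-entropy-form} diverges ``exactly when'' $\r{1} \not\ll \etab{\b}_1$ needs an argument: if the violation of absolute continuity comes only from a \emph{leaf} mark of $\nu$-probability zero, neither $H(\r{o}\|\nu)$ nor $H(\bvr/\b\,\|\,\bar{\xi})$ is obviously infinite. One must transfer the violation to the root using the symmetry of $\pi_{\r{1}}$ (admissibility, which holds by Remark \ref{remark:admissibility} since $\rho$ is unimodular): positive mass of $\pi_{\r{1}}^1$ on a forbidden mark forces positive mass of $\pi_{\r{1}}^o$, hence of $\r{o}$, on it, so $H(\r{o}\|\nu) = \infty$; edge-mark violations give $H(\bvr/\b\,\|\,\bar{\xi}) = \infty$ directly. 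With these two repairs the degenerate case closes, and the remaining cancellation, performed under $\r{1} \ll \etab{\b}_1$ where all four leading terms are finite, is exactly the paper's computation.
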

\begin{proof}
We consider three cases.

{\em Case I:} Let $\rho \in \P(\Ginf) \setminus \PUB$. By the definition of $\calItilde_{\b, \etab{\b}}$, we have $\calItilde_{\b, \etab{\b}}(\rho) = \infty$. Also, by Remark \ref{remark:sigma}, we have $\Sigma_{\bvr, \r{o}}(\rho) = - \infty$. This shows $\calboldI_\b(\rho) =  \calItilde_{\b, \etab{\b}}(\rho)$ whenever $\rho \in \P(\Ginf) \setminus \PUB$. \\

{\em Case II:} Let $\rho \in \PUB$ be such that $\EDLDRinf= \infty$. In this case, by the definition of $\Sigma$ in \eqref{eqn:sigma} and the definition of $J_1$ in \eqref{eqn:jh}, we see that $\Sigma_{\bvr, \r{o}}(\rho) = -\infty$. Therefore, $\calboldI_\b(\rho) = \infty$. On the other hand, by Lemma \ref{lemma:EDLDinf-cond}, we have $H(\rho_{o, \text{deg}} \| \etab{\b}_{o, \text{deg}}) = \infty$. Therefore, by the chain rule and non-negativity of relative entropy, we have
\begin{align*}
H(\r{1} \| \etab{\b}_1) & \geq H(\rho_{o, \text{deg}} \| \etab{\b}_{o, \text{deg}}) = \infty.
\end{align*}
Together with Lemma \ref{lemma:finiteness} and \eqref{eqn:calItilde-generic},  this shows that $\calItilde_{\b, \etab{\b}}(\rho) = \infty$. Hence, $\calboldI_\b(\rho) = \calItilde_{\b, \etab{\b}}(\rho)$ whenever $\rho \in \PUB$ is such that $\EDLDRinf = \infty$. 
 
{\em Case III:} Let $\rho \in \PUB$ be such that $\EDLDRinf < \infty$. Recall from \eqref{eqn:sigma} that $\Sigma_{\bvr, \r{o}}(\rho) = \lim_{h \to \infty} J_h(\r{h})$. First, suppose that $J_1(\r{1}) = -\infty$. Since $J_h(\r{h})$ is non-increasing in $h$ \cite[Theorem 3]{DelAna19}, it follows that $J_h(\r{h}) = -\infty$ for all $h \geq 2$, and hence, from \eqref{eqn:sigma}, we get $\Sigma_{\bvr, \r{o}}(\rho) = -\infty$. Therefore, $\calboldI_\b(\rho) = \infty$. On the other hand, by Lemma \ref{lemma:j1-h-relation}, $J_1(\r{1}) = -\infty$ implies that $H(\r{1} \| \etab{\b}_1) = \infty$. Therefore, together with Lemma \ref{lemma:finiteness}, we have $\calItilde_{\b, \etab{\b}}(\rho) = \infty$. Next, suppose that $J_1(\r{1}) > -\infty$. In this case, by Lemma \ref{lemma:jh-recursion} and  Lemma \ref{lemma:j1-h-relation}, it follows  that 
\begin{align*}
\Sigma_{\bvr, \r{o}}(\sigma) & = \vec{s}(\bvr) + H(\r{o}) + H(\r{o} \| \nu) + \frac{\b}{2} H\cpr{\frac{\bvr}{\b} \biggr\| \bar{\xi} } \\
& \quad - \left(H(\r{1} \| \etab{\b}_1) - \frac{\b}{2} H(\pi_{\r{1}} \| \pi_{\etab{\b}_1})\right) - \sum_{h \geq 2} \sqpr{ H(\rho_{h} \| \rho^*_{h}) - \frac{\b}{2} H(\pi_{\rho_{h}} \| \pi_{\rho^*_{h}})}.
\end{align*}
Using the above display, the definitions of $\calboldI_\b(\rho)$ and $\calItilde_{\b, \etab{\b}}(\rho)$ in \eqref{eqn:I-entropy-form} and \eqref{eqn:calItilde-generic} respectively, and Lemma \ref{lemma:PI-SBR}, it follows that $\calItilde_{\b, \etab{\b}}(\rho) = \calboldI_\b(\rho)$.  This completes the proof of the lemma
\end{proof}

\subsubsection{Proof of $\calboldI^\FE = \calItilde^\FE$ and $\calboldI^\ER = \calItilde^\ER$}
\label{sec:ifeer-itildefeer-proof}
Since $\k > 0$, by Lemma \ref{lemma:new-old-fe-er} and \eqref{eqn:I-FE-iid-old}, we have $\calboldI_\k = \calItilde_{\k, \etab{\k} } = \calItilde^\FE$.

For the $\ER$ random graph sequence, substituting into \eqref{eqn:I-entropy-form-er}  the identity $\calboldI_{\b'} = \calItilde_{\b', \etab{\b'}}$ when $\b' > 0$ from Lemma \ref{lemma:new-old-fe-er}, using the definition of $\calItilde_{\b', \etab{\b'}}$ when $\b' = 0$ from \eqref{eqn:calItilde-generic}, and invoking the definition of $\calItilde^\ER$ from \eqref{eqn:I-ER-iid-old}, it follows that $\calboldI^\ER = \calItilde^\ER$. 
\qed

We can now complete the proof of Theorem \ref{thm:iid-old}.

\begin{proof}[Proof of Theorem \ref{thm:iid-old}]
The theorem follows from Theorem \ref{thm:old-iid-microstate-entropy} together with the facts $\calboldI^\CM = \calItilde^\CM$, $\calboldI^\FE = \calItilde^\FE$ and $\calboldI^\ER = \calItilde^\ER$ established in Sections \ref{sec:icm-itildecm-proof} and \ref{sec:ifeer-itildefeer-proof}.
\end{proof}

As a consequence of our results, we also arrive at an alternative expression for the microstate entropy in terms of the rate function $\calI_{\b, \etab{\b}}$ in \eqref{eqn:calI-generic}.
\begin{corollary}
Let $\rho \in \P(\Ginf)$ be such that $0 < \b:=\Exp_{\rho}[\dego] < \infty$. Then
\begin{align*}
\Sigma_{\bvr, \r{o}}(\rho) = H(\r{o}) + H(\r{o} \| \nu) +\frac{\b}{2} H\cpr{\frac{\bvr}{\b} \biggr\| \bar{\xi}} +  \vec{s}(\bvr) - \calI_{\b, \etab{\b}}(\rho).
\end{align*}
    \label{cor:sigma-alternative}
\end{corollary}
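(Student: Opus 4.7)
The plan is to derive this identity as an immediate algebraic rearrangement of the defining expression \eqref{eqn:I-entropy-form} for $\calboldI_\b$, together with the two equivalences already established: $\calboldI_\b = \calItilde_{\b, \etab{\b}}$ from Lemma \ref{lemma:new-old-fe-er}, and $\calItilde_{\b, \etab{\b}} = \calI_{\b, \etab{\b}}$ from Theorem \ref{thm:alternate-form-theorem} (applied with $\eta = \etab{\b}$). No new analytic work is required; the corollary is really a reformulation of what has just been proved.

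More concretely, under the hypothesis $0 < \b = \Exp_\rho[\dego] < \infty$, the first branch of \eqref{eqn:I-entropy-form} applies and yields
\[
\calboldI_\b(\rho) \;=\; H(\r{o}) + H(\r{o} \| \nu) + \frac{\b}{2} H\!\left(\frac{\bvr}{\b} \,\bigg\|\, \bar{\xi}\right) + \vec{s}(\bvr) - \Sigma_{\bvr, \r{o}}(\rho).
\]
Solving for $\Sigma_{\bvr, \r{o}}(\rho)$ and then substituting $\calboldI_\b(\rho) = \calItilde_{\b, \etab{\b}}(\rho) = \calI_{\b, \etab{\b}}(\rho)$ delivers the stated identity. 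The four terms $H(\r{o})$, $H(\r{o}\|\nu)$, $\frac{\b}{2}H(\bvr/\b \,\|\, \bar{\xi})$, and $\vec{s}(\bvr)$ are all finite since $\X$ and $\Y$ are finite sets and $\bvr/\b \in \P(\Y \times \Y)$, so the rearrangement is unambiguous on the finite part.

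The only subtlety, which I expect to handle with a brief case split, is the possibility that one side of the equation is infinite. If $\rho \notin \PUB$, or if $\rho \in \PUB$ but $\EDLDRinf = \infty$, then the definition \eqref{eqn:sigma} of $\Sigma$ (combined with Remark \ref{remark:sigma} and the monotonicity of $J_h$) forces $\Sigma_{\bvr, \r{o}}(\rho) = -\infty$; concurrently, the definition \eqref{eqn:calI-generic} together with Lemma \ref{lemma:EDLDinf-cond} gives $\calI_{\b, \etab{\b}}(\rho) = \infty$, so both sides are $-\infty$ under the convention $(\text{finite}) - \infty = -\infty$. Since this exhaustion of cases mirrors the case analysis carried out in the proof of Lemma \ref{lemma:new-old-fe-er}, there is no genuine obstacle; the main point is to verify that the finite-term rearrangement remains consistent at the infinite endpoint.
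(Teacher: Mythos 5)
Your proposal is correct and follows essentially the same route as the paper: the identity is obtained by equating the microstate-entropy form \eqref{eqn:I-entropy-form} of the rate function with its relative-entropy form $\calI_{\b, \etab{\b}}$ and rearranging, the only difference being that you invoke Lemma \ref{lemma:new-old-fe-er} and Theorem \ref{thm:alternate-form-theorem} directly, whereas the paper phrases this as an immediate consequence of the three LDP statements (Theorems \ref{thm:iid}, \ref{thm:iid-old} and \ref{thm:old-iid-microstate-entropy}) for the $\FE$ sequence, which rest on exactly those equalities. Your extra case split at the infinite endpoints is sound (and mirrors Cases I--II in the proof of Lemma \ref{lemma:new-old-fe-er}), though it is not strictly needed since the four explicit terms are finite, so the rearrangement is unambiguous in the extended reals.
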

\begin{proof}
This is an immediate consequence of the LDP for the $\FE$ random graph sequence established in Theorems \ref{thm:iid}, \ref{thm:iid-old} and \ref{thm:old-iid-microstate-entropy}.
\end{proof}

\subsection{Proof of Theorem \ref{thm:iid-nbd}}
\label{sec:iid-nbd-pf}
We now prove Theorem \ref{thm:iid-nbd} using Theorem \ref{thm:iid}. Consider the projection map $\varpi : \P(\Ginf) \to \P(\T{1})$ that maps $\rho$ to its depth 1 marginal $\rho_1$. We first show that this map is continuous.
\begin{lemma} The map $\varpi: \P(\Ginf) \to \P(\T{1})$ is continuous.
\label{lemma:varpi-continuity}
\end{lemma}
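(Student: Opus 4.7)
The plan is to show continuity of $\varpi$ via the continuous mapping theorem. The key observation is that $\varpi$ is the pushforward operator associated with the pointwise truncation map $T_1 : \Ginf \to \T{1}$ defined by $T_1(G, o, (x,y)) := (G, o, (x,y))_1$; indeed, for any $\rho \in \P(\Ginf)$ and any measurable $A \subseteq \T{1}$, we have $\rho_1(A) = \rho(T_1^{-1}(A))$, so $\varpi(\rho) = \rho \circ T_1^{-1}$.

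The first step is to verify that $T_1$ is continuous from $\Ginf$ to $\T{1}$, where both spaces are equipped with the local topology. This is essentially built into the definition of the local topology: by construction (see \cite[Section 2]{AldLyo07} or \cite[Section 3.2]{Bor16}), a sequence $(G^{(n)}, o^{(n)}, (x^{(n)}, y^{(n)})) \to (G, o, (x,y))$ in $\Ginf$ if and only if, for every $h \in \NZ$, the depth-$h$ truncations converge in $\G{h}$. Specializing to $h = 1$ gives continuity of $T_1$.

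Once $T_1$ is continuous, continuity of $\varpi$ follows from the standard fact (an application of the portmanteau theorem, or equivalently the continuous mapping theorem) that pushforward under a continuous map between Polish spaces is weakly continuous: if $\rho^{(n)} \to \rho$ in $\P(\Ginf)$ and $f \in C_b(\T{1})$, then $f \circ T_1 \in C_b(\Ginf)$, so
\begin{equation*}
\int_{\T{1}} f \, d \varpi(\rho^{(n)}) = \int_{\Ginf} (f \circ T_1) \, d\rho^{(n)} \longrightarrow \int_{\Ginf} (f \circ T_1) \, d\rho = \int_{\T{1}} f \, d\varpi(\rho),
\end{equation*}
which shows $\varpi(\rho^{(n)}) \to \varpi(\rho)$ weakly in $\P(\T{1})$. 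No step here is expected to be delicate; the whole proof is essentially a one-line invocation of the continuous mapping theorem once the pointwise continuity of $T_1$ is recorded, and will occupy at most a short paragraph in the paper.
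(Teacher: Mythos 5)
Your proposal is correct and follows essentially the same route as the paper: the paper's proof also pulls back a test function $g \in C_b(\T{1})$ through the depth-$1$ truncation (writing $f(G) = g(G_1)$) and passes to the limit against $\rho^n$, which is exactly your pushforward/continuous-mapping argument. The only difference is that you make explicit the continuity of the truncation map $T_1$ in the local topology, a point the paper leaves implicit.
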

\begin{proof}
Let $\rho^n \to \rho$ in $\P(\Ginf)$ as $n \to \infty$, i.e., for any $f \in C_b(\Ginf)$, we have
\begin{align*}
\int_{\Ginf} f \, d\rho^n \to \int_{\Ginf} f \, d\rho \text{ as } n \to \infty.
\end{align*}
Define $\mu^n = \varpi(\rho^n)$, $n \in \N$, and $\mu = \varpi(\rho)$. Let $g \in C_b(\T{1})$. Using $f(G) = g(G_1)$, $G \in \Ginf$, note that
\begin{align*}
\int_{\Ginf} f  \, d\rho^n = \int_{\T{1}} g \, d\mu^n, \, n \in \N; \text{ and } \, \int_{\Ginf} f \, d\rho = \int_{\T{1}} g \, d \mu,
\end{align*}
and hence $\mu^n \to \mu$ in $\P(\T{1})$ as $n \to \infty$. This shows that $\varpi$ is continuous.
\end{proof}
Recall the one-step extension from Definition \ref{def:unimod}. To prove Theorem \ref{thm:iid-nbd}, we will also need to define the full unimodular extension (see Remark \ref{rmk:unimod-full-extension}). Let $h \in \N$, and let $\r{h}$ be admissible.
\begin{definition}[Unimodular extension]
\label{def:unimod-full-extension}
First, we sample $\TB \in \T{h}$ with law $\r{h}$ (i.e., $\Law(\TB) = \r{h}$). If $\dego = 0$, we stop. If $\dego  \geq 1$, independently for each  $v \in N_o(\TB)$, sample  $\TB^\prime \in \T{h} \times \Y$ with law $\PU{h}(\TBOV_{h-1},\TBVO_{h-1})(\cdot)$ and replace $\TBOV_{h-1}$ with $\TB^\prime$ to obtain a $\T{h+1}$ random element.  We repeat this construction independently for the vertices in $\bbV_2$  -- that is, for each vertex  $v \in \bbV_2$, we (independently) sample $\TB^{''}$  with law $\PU{h}(\TB(v \setminus p(v))_{h-1}, \TB(p(v) \setminus v)_{h-1})$ and replace $\TB(v \setminus p(v))_{h-1}$ with $\TB^{''}$. This results in a $\T{h+2}$ random element. We repeat this construction independently for all vertices in $\bbV_3$, $\bbV_4$, etc.  We define $\UGWR{h}$ as the law of the resultant $\Tinf$ random element.
\end{definition}
Note that $\Rstar{h} = (\UGWR{h-1})_{h}$, where $\Rstar{h}$ is defined in \eqref{eqn:Rstarh}.

We now prove Theorem \ref{thm:iid-nbd}.
\begin{proof}[Proof of Theorem \ref{thm:iid-nbd}]
Consider $L_n^\CM$. Note that $L_n^\CM = \varpi(U^\CM_n)$.  By Lemma \ref{lemma:varpi-continuity},  $\varpi$ is continuous. Since $\{U_n^\CM\}$ satisfies the LDP on $\P(\Ginf)$  with rate function $\calI^\CM$ (by Theorem \ref{thm:iid}), it follows from the contraction principle (e.g., \cite[Theorem 4.2.1]{DemZei98}) that the sequence $\{L^\CM_n\}$ satisfies the LDP with rate function 
\begin{align*}
\P(\T{1}) \ni \mu \mapsto \inf\{\calI^\CM(\rho) : \mu = \varpi(\rho)\}.
\end{align*}
On the one hand, if $\mu$ is either not admissible or $\mu_{o, \text{deg}} \neq \alpha$, then  for any $\rho$ such that $\varpi(\rho) = \mu$, by Remark \ref{remark:admissibility}, we have either $\rho \notin \PUK$ or $\rho_{o, \text{deg}} \neq \alpha$. In this case, the infimum in the previous display is $\infty$. Recalling the definition of $\frakI^\CM$ in \eqref{eqn:J-CM-iid}, we have $\frakI^\CM(\mu) = \infty$. On the other hand, if $\mu$ is admissible and $\rho_{o, \text{deg}} = \alpha$, then the above infimum is attained by $\mathsf{UGWT}(\mu)$. Indeed, by the definitions in \eqref{eqn:calI-generic}, \eqref{eqn:I-CM-iid-new}, \eqref{eqn:frakI2-generic}, and \eqref{eqn:J-CM-iid}, the above infimum is larger than $\frakI^\CM(\mu)$. If $\rho = \mathsf{UGWT}(\mu)$, then $\r{h} = \Rstar{h}$ for all $h \geq 2$. In particular, $\SBRM{h} = \SBRstarM{h}$, and by \eqref{eqn:RPh} and \eqref{eqn:RCPh}, it follows that  $\RP_h = \RCP_h = \Rstar{h} = \r{h}$ for all $h \geq 2$, whence  $\calI^\CM(\rho) = \frakI^\CM(\mu)$. Therefore, we conclude that the above infimum is attained by $\mathsf{UGWT}(\mu)$. Hence, the map in the above display coincides with the function  $\frakI^\CM$, and it follows that $\{L_n^\CM\}$ satisfies the LDP on $\P(\T{1})$ with rate function $\frakI^\CM$. Using analogous arguments, we also conclude the LDP for the families  $\{L_n^\FE\}$ and $\{L_n^\ER\}$ on $\P(\T{1})$ with rate functions $\frakI^\FE$ and $\frakI^\ER$, respectively. This completes the proof of the theorem.
\end{proof}

\appendix

\section{Finiteness of various entropies: Proof of Lemma \ref{lemma:finiteness}}
\label{appendix:lemma-finiteness}

We now prove the finiteness of entropies and relative entropies involving $\rho_h$, $\rho_h^*$, $\bar{\rho}_h$ and $\bar{\rho}_h^*$, as stated in Lemma \ref{lemma:finiteness}. Throughout this appendix, we use the definitions introduced in Section \ref{sec:prelim-pi-eh}. We will also appeal to the identity $\SBRM{h} = \PIR{h}$ proved in  Lemma \ref{lemma:PI-SBR}. Note that Lemma \ref{lemma:PI-SBR} does not depend on any result in Section \ref{sec:alternate-form}, and hence there is no circular reasoning in replacing $\SBRM{h}$ by $\PIR{h}$ throughout this section. 

To prove Lemma \ref{lemma:finiteness}, we first establish an alternative expression for the conditional laws $\PU{h}$ introduced in \eqref{eqn:def-PU}  in terms of the quantity $E_h$ defined in \eqref{eqn:def-Eh}. To this end, we
introduce the following definition.

\begin{definition}
For $h \in \N$, $\tau \in \T{h} \times \Y$ and $\tau^\prime \in \T{h-1} \times \Y$, let $\tau \oplus \tau^\prime$ denote the $\T{h}$ element formed by attaching $\tau^\prime$ to the root of $\tau$.  
\label{def:oplus}
\end{definition}
\noindent See Figure \ref{fig:t-oplus} for illustration. Also, note that the degree of the root in $\tau \oplus \tau'$ is one more than the degree of the root of $\tau$, that is,
\begin{align}
\text{deg}_{\tau \oplus \tau'}(o) = 1+\text{deg}_\tau(o), \quad \tau \in \T{h}\times \Y, \tau' \in \T{h-1} \times \Y.
    \label{eqn:deg-oplus}
\end{align}

\begin{figure}
\centering
\begin{subfigure}{.25\textwidth}
\begin{tikzpicture}[scale=0.7]
\filldraw[black] (0,0) circle(3pt);
\filldraw[black] (-1,-1) circle(3pt);
\filldraw[black] (1,-1) circle(3pt);
\filldraw[black] (-1.5,-2) circle(3pt);
\filldraw[black] (-0.5,-2) circle(3pt);
\filldraw[black] (1.5,-2) circle(3pt);
\draw[black, thick] (0,0) -- (-1,-1);
\draw[black, thick] (0,0) -- (1,-1);
\draw[black, thick] (-1.5,-2) -- (-1,-1);
\draw[black, thick] (-0.5,-2) -- (-1,-1);
\draw[black, thick] (1.5,-2) -- (1,-1);
\draw[black, thick, dashed] (0,0) -- (1,0);
\draw[black] (-0.2,-0.2) rectangle (0.2,0.2);
\end{tikzpicture}
\caption{$\tau$}
\end{subfigure}\quad
\begin{subfigure}{0.25\textwidth}
\begin{tikzpicture}[scale=0.7]
\filldraw[black] (3,0) circle(3pt);
\filldraw[black] (2,-1) circle(3pt);
\filldraw[black] (3,-1) circle(3pt);
\filldraw[black] (4,-1) circle(3pt);
\draw[black, thick] (3,0) -- (2,-1);
\draw[black, thick] (3,0) -- (3,-1);
\draw[black, thick] (3,0) -- (4,-1);
\draw[black, thick, dashed] (3,0) -- (2.5,1);
\draw[black] (3,0) circle (6pt);
\end{tikzpicture}
\caption{$\tau'$}
\end{subfigure}\quad
\begin{subfigure}{0.25\textwidth}
\begin{tikzpicture}[scale=0.7]
\filldraw[black] (0,0) circle(3pt);
\filldraw[black] (-1,-1) circle(3pt);
\filldraw[black] (1,-1) circle(3pt);
\filldraw[black] (-1.5,-2) circle(3pt);
\filldraw[black] (-0.5,-2) circle(3pt);
\filldraw[black] (1.5,-2) circle(3pt);
\node at (0,0.4) {$o$};
\draw[black, thick] (0,0) -- (-1,-1);
\draw[black, thick] (0,0) -- (1,-1);
\draw[black, thick] (-1.5,-2) -- (-1,-1);
\draw[black, thick] (-0.5,-2) -- (-1,-1);
\draw[black, thick] (1.5,-2) -- (1,-1);

\filldraw[black] (4,-1) circle(3pt);
\filldraw[black] (3,-2) circle(3pt);
\filldraw[black] (4,-2) circle(3pt);
\filldraw[black] (5,-2) circle(3pt);
\draw[black, thick] (4,-1) -- (3,-2);
\draw[black, thick] (4,-1) -- (4,-2);
\draw[black, thick] (4,-1) -- (5,-2);
\draw[black, thick] (4,-1) -- (0,0);

\draw[black] (4,-1) circle (6pt);
\draw[black] (-0.2,-0.2) rectangle (0.2,0.2);

\end{tikzpicture}
\caption{$\tau \oplus \tau'$}
\end{subfigure}

\caption{Illustration of $\tau \oplus \tau'$, where $\tau \in \T{2} \times \Y$ and $\tau' \in \T{1}\times \Y$. The dashed lines in both $\tau$ and $\tau'$ represent a $\Y$ element. The root of $\tau$ (resp. $\tau'$) is denoted by a square (resp. circle). The root of $\tau \oplus \tau'$ is denoted by $o$.}
\label{fig:t-oplus}
\end{figure}
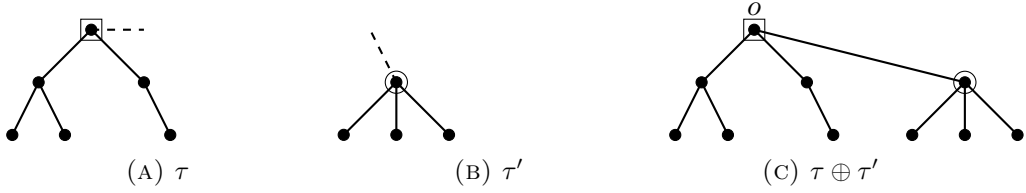

\begin{lemma}
Let $h \in \N$. If $\r{h} \in \P(\T{h})$ is admissible, then for any $\tau, \tau' \in \T{h-1} \times \Y$ with $\PIR{h}(\tau, \tau') > 0$, we have
\begin{align}
\PU{h}(\tau, \tau^\prime)(\TBdef) = \frac{\indf\{\TBdef_{h-1}  = \tau\}}{\b \PIR{h}(\tau, \tau^\prime)} \r{h}(\TBdef  \oplus \tau^\prime) E_h(\tau^\prime, \tau)(\TBdef \oplus \tau^\prime), \, \quad  \TBdef \in \T{h} \times \Y.
\label{eqn:PU}
\end{align}
\end{lemma}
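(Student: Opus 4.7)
The plan is to unwind the definition of $\PU{h}$ in \eqref{eqn:def-PU} via the Ulam-Harris-Neveu labeling (Section \ref{sec:labeling}), under which vertex $1$ is a uniformly chosen neighbor of the root once $\dego \geq 1$. Since $\TBOneO_h = \TBdef$ forces $\TBOneO_{h-1} = \TBdef_{h-1}$, the indicator $\indf\{\TBdef_{h-1} = \tau\}$ appears automatically. The first step is to write both the joint probability in the numerator and the probability of the conditioning event as sums over $\TB \in \T{h}$ of $\r{h}(\TB)/\dego$ weighted by the number of root-neighbors of $\TB$ realizing the respective configurations. A tree $\TB$ contributes to the numerator only if it has a neighbor $v$ of $o$ with $\TB(o \setminus v)_h = \TBdef$ and $\TB(v \setminus o)_{h-1} = \tau'$; inverting the cut along $\{o,v\}$ forces $\TB \simeq \TBdef \oplus \tau'$ in the sense of Definition \ref{def:oplus}, so the numerator collapses to a single term proportional to $\r{h}(\TBdef \oplus \tau')$.

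For the denominator, I will invoke Lemma \ref{lemma:PI-SBR} to identify the size-biased marginal $\SBRM{h}$ with $\PIR{h}$, use the admissibility-induced symmetry $\PIR{h}(\tau, \tau') = \PIR{h}(\tau', \tau)$, and exploit the size-biasing relation \eqref{eqn:SBRinf} between $\r{h}$ and $\SBR{h}$ to absorb the $1/\dego$ factors arising from the UHN weighting. These manipulations recast the overall normalization as $\b \PIR{h}(\tau, \tau')$ and simultaneously promote the $1/\dego$-weighted numerator into the unweighted form $\r{h}(\TBdef \oplus \tau') \cdot (\text{count})$ appearing in the stated formula.

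The main obstacle, and the crux of the proof, is the combinatorial identity which turns this count into $E_h(\tau', \tau)(\TBdef \oplus \tau')$:
\begin{equation*}
\bigl|\{v \in N_o(\TBdef \oplus \tau^\prime) : (\TBdef \oplus \tau^\prime)(o \setminus v)_h = \TBdef,\ (\TBdef \oplus \tau^\prime)(v \setminus o)_{h-1} = \tau^\prime\}\bigr| = E_h(\tau^\prime, \tau)(\TBdef \oplus \tau^\prime).
\end{equation*}
The inequality $\leq$ is immediate, since the depth-$h$ match on the root side demanded by the left-hand count is a priori stronger than the depth-$(h-1)$ match against $\tau = \TBdef_{h-1}$ built into $E_h$. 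For the reverse inequality, the key observation is that any neighbor $v$ of $o$ in $\TBdef \oplus \tau^\prime$ satisfying $(\TBdef \oplus \tau^\prime)(v \setminus o)_{h-1} = \tau^\prime$ has the same depth-$(h-1)$ subtree and matching edge marks as the root of $\tau^\prime$ freshly attached by the $\oplus$ operation; consequently, swapping $v$ with that attached vertex preserves the multiset of depth-$(h-1)$ branches at $o$, upgrading the depth-$(h-1)$ matching supplied by $E_h$ to the depth-$h$ matching required on the left. Combining the three ingredients above yields the claim.
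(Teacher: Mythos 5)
Your proposal is correct and follows essentially the same route as the paper's proof: write $\PU{h}(\tau,\tau')(\TBdef)$ as a ratio of $\r{h}$-probabilities under the Ulam-Harris-Neveu labeling, collapse the numerator to $\r{h}(\TBdef\oplus\tau')\,E_h(\tau',\tau)(\TBdef\oplus\tau')/(\text{deg}_\tau(o)+1)$ via the depth-$(h-1)$-to-depth-$h$ upgrade, and rewrite the denominator as $\b\,\SBRM{h}(\tau',\tau)/(\text{deg}_\tau(o)+1)$ using \eqref{eqn:SBRinf}, admissibility and Lemma \ref{lemma:PI-SBR}. Your explicit multiset-swap argument for the upgrade is precisely what the paper compresses into its ``if and only if'' claim together with Remark \ref{remark:Eh-conditional}; just note that the edge mark $y_{(o,v)}$ is pinned down by the root-side condition $(\TBdef\oplus\tau')(o\setminus v)_{h-1}=\tau$ appearing in the $E_h$ count, not by the branch-side condition $(\TBdef\oplus\tau')(v\setminus o)_{h-1}=\tau'$ alone.
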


\begin{proof}
For convenience, we rewrite the definition of $\PU{h}$ in \eqref{eqn:def-PU} below:
\begin{align*}
\PU{h}(\tau, \tau')(\TBdef) = \r{h} \cpr{ \TBOneO_h = \TBdef | \TBOneO_{h-1} = \tau, \TBOOne_{h-1} = \tau'},  \quad  \TBdef \in \T{h}.
\end{align*}
If $\TBdef_{h-1} \neq \tau$, it is clear that the right-hand side above as well as the right-hand side of \eqref{eqn:PU} is equal to $0$. Thus, we can assume that $\TBdef_{h-1} = \tau$. In this case, by the last display,
\begin{align}
\PU{h}(\tau, \tau')(\TBdef) = \frac{\r{h} \cpr{ \TBOneO_h = \TBdef, \TBOOne_{h-1} = \tau'}}{\r{h} \cpr{\TBOneO_{h-1} = \tau, \TBOOne_{h-1} = \tau'}}. \label{eqn:proof-pu-conditinal-main}
\end{align}
Consider the denominator first. Using the condition $\PIR{h}(\tau, \tau') > 0$, the admissibility of $\r{h}$ (which implies $\PIR{h}(\tau', \tau) > 0$), the definition of $\SBR{h}$ in  \eqref{eqn:SBRinf}, the identity $\b = \Exp_{\rho}\sqpr{\NO}$, and \eqref{eqn:deg-oplus}, we obtain
\begin{align*}
 \frac{\r{h}\cpr{\TBOneO_{h-1} = \tau, \TBOOne_{h-1} = \tau'}}{\SBR{h}\cpr{\TBOneO_{h-1} = \tau, \TBOOne_{h-1} = \tau'} } = \frac{\b}{\text{deg}_\tau(o) + 1}.
\end{align*}
Together with Definition \ref{def:admissible-h}(i) this implies
\begin{align*}
\r{h} \cpr{\TBOneO_{h-1} = \tau, \TBOOne_{h-1} = \tau'} & = \SBRM{h}(\tau', \tau) \cpr{ \frac{\b}{\text{deg}_\tau(o)+ 1}}.
\end{align*}
Next, to simplify the numerator of the right-hand side of \eqref{eqn:proof-pu-conditinal-main}, note that the $\T{h}$ random element $\TB$, labeled according to the Ulam-Harris-Neveu
scheme (see Remark \ref{remark:label-unlabel}), satisfies $(\TBOneO_h = \TBdef, \TBOOne_{h-1} = \tau')$  if and only if $(\TBOneO_{h-1} = \tau, \TBOOne_{h-1} = \tau')$ and $\TB$ is isomorphic to the (unlabeled) tree $\TBdef \oplus \tau'$. Hence, using Remark \ref{remark:Eh-conditional}, we see that
\begin{align*}
\r{h} \cpr{ \TBOneO_h = \TBdef, \TBOOne_{h-1} = \tau'} = \cpr{\frac{E_h(\tau', \tau)(\TBdef \oplus \tau')}{\text{deg}_\tau(o) + 1} } \r{h} \cpr{\TBdef \oplus \tau'}.
\end{align*}
Substituting the previous two displays into \eqref{eqn:proof-pu-conditinal-main}, invoking the admissibility of $\r{h}$ and using Lemma \ref{lemma:PI-SBR}, we arrive at \eqref{eqn:PU}.
\end{proof}

\begin{proof}[Proof of Lemma \ref{lemma:finiteness}]
Let $\b > 0$. Let $\rho \in \PUB$ and suppose that $\EDLDRinf < \infty$. By \cite[Lemma 4]{DelAna19}, we have $H(\r{h}) < \infty$ for all $h \in \N$. This shows \ref{item:HRH}. 

Also, for $h \geq 2$,  by \cite[(114), page 76]{DelAna19}, we have
\begin{align*}
- \sum_{\tau \in \T{h}} \r{h}(\tau) \log \cpr{\Rstar{h}(\tau)}  < \infty.
\end{align*}
Then \ref{item:HRHD} follows on observing that  \ref{item:HRH} and the above display imply
\begin{align*}
H(\r{h} \| \Rstar{h}) = -H(\r{h})  - \sum_{\tau \in \T{h}} \r{h}(\tau) \log \cpr{\Rstar{h}(\tau)}  <\infty.
\end{align*}

We now turn to the proof of  \ref{item:HPIRH}. Since $\SBRM{1}$ is a probability measure on the finite set $(\X \times \Y) \times (\X \times \Y)$, it follows that $H(\SBRM{1}) < \infty$. Let $h \geq 2$. By Lemma \ref{lemma:PI-SBR}, we know that $\SBRM{h} = \PIR{h}$.  We first  show that
\begin{align}
- \sum_{\tau, \tau^\prime \in \T{h-1} \times \Y} \PIR{h}(\tau, \tau^\prime) \log \cpr{\PIRstar{h}(\tau, \tau^\prime)}  <\infty. \label{eqn:finiteness-step3}
\end{align}
Let $\tau, \tau^\prime \in \T{h-1} \times \Y$. From the definition of $\PIR{h}$ in \eqref{eqn:PIRh},  Lemma \ref{lemma:PI-SBR}, and the definition of $\Rstar{h}$ in \eqref{eqn:Rstarh}, we have
\begin{align*}
\PIRstar{h}(\tau, \tau^\prime) & = \frac{1}{\b}\sum_{\tilde{\tau}: E_h(\tau, \tau^\prime)(\tilde{\tau}) \geq 1} \Rstar{h}(\tilde{\tau})  E_h(\tau, \tau^\prime)(\tilde{\tau}) \\
& \geq \frac{1}{\b} \r{h-1}(\tau^\prime \oplus \tau_{h-2})  \PU{h-1}(\tau_{h-2}, \tau^\prime_{h-2})(\tau),
\end{align*}
where $\PU{h-1}$ is given by \eqref{eqn:PU}. Using the expression for $\PU{h-1}(\tau_{h-2}, \tau^\prime_{h-2})$ in \eqref{eqn:PU}, and  noting that  $E_{h-1}(\tau^\prime_{h-2}, \tau_{h-2})(\tau \oplus \tau_{h-2}^\prime) \geq 1$, the above display becomes
\begin{align*}
\PIRstar{h}(\tau, \tau^\prime)  & \geq  \frac{1}{\b^2 \PIR{h-1}(\tau_{h-2}, \tau^\prime_{h-2})}\r{h-1}(\tau^\prime \oplus \tau_{h-2}) \r{h-1}(\tau \oplus \tau^\prime_{h-2}). \numberthis \label{eqn:finiteness-step0}
\end{align*}
Also since $\r{h}$ is admissible $\PIR{h}(\tau, \tau') = \PIR{h}(\tau', \tau)$ for all $\tau, \tau' \in \T{h-1}$. Together with \eqref{eqn:finiteness-step0} this  implies that
\begin{align*}
- \sum_{\tau, \tau^\prime \in \T{h-1} \times \Y}  &  \PIR{h}(\tau, \tau^\prime) \log \cpr{\PIRstar{h}(\tau, \tau^\prime)} \\
&  \leq  2 \log \b + \sum_{\tau, \tau^\prime \in \T{h-1} \times \Y} \PIR{h}(\tau, \tau^\prime) \log \cpr{\PIR{h-1}(\tau_{h-2}, \tau^\prime_{h-2})} \\
& \qquad - \sum_{\tau, \tau^\prime \in \T{h-1} \times \Y} \PIR{h}(\tau, \tau^\prime) \log \cpr{\r{h-1}(\tau^\prime \oplus \tau_{h-2})}\\
& \qquad - \sum_{\tau, \tau^\prime \in \T{h-1} \times \Y} \PIR{h}(\tau, \tau^\prime) \log \cpr{ \r{h-1}(\tau \oplus \tau^\prime_{h-2})} \\
&  \leq 2 \log \b - 2 \sum_{\tau, \tau^\prime \in \T{h-1} \times \Y} \PIR{h}(\tau, \tau^\prime) \log \cpr{\r{h-1}(\tau^\prime \oplus \tau_{h-2})}, \numberthis \label{eqn:finiteness-step1}
\end{align*}
where the last inequality follows since $\sum_{\tau, \tau^\prime \in \T{h-1} \times \Y} \PIR{h}(\tau, \tau^\prime) \log \cpr{\PIR{h-1}(\tau_{h-2}, \tau^\prime_{h-2})} = -H(\PIR{h-1}) \leq 0$. 
Observe that for any $\tau_{h-2} \in \T{h-2} \times \Y$ and $\tau^\prime \in \T{h-1} \times \Y$, using \eqref{eqn:PIRh} and the monotone convergence theorem, we have
\begin{align*}
\sum_{\substack{\TBdef \in \T{h-1} \times \Y: \\ \TBdef_{h-2} = \tau_{h-2}}} \PIR{h}(\TBdef, \tau^\prime)  & =  \frac{1}{\b} \Exp_{\r{h}} \sqpr{\sum_{\substack{\TBdef \in \T{h-1} \times \Y: \\ \TBdef_{h-2} = \tau_{h-2}}} E_h(\TBdef, \tau')(\TB)}. \numberthis \label{eqn:finiteness-step2-sum}
\end{align*}
Note that for any $\tau'' \in \T{h}$, by using the definition of $E_h$ from \eqref{eqn:def-Eh} and Remark \ref{remark:Eh} for the first equality below, and Definition \ref{def:oplus} (also see Figure \ref{fig:t-oplus}) for the second equality, we obtain
\begin{align*}
\sum_{\substack{\TBdef \in \T{h-1} \times \Y: \\ \TBdef_{h-2} = \tau_{h-2}}} E_h(\TBdef, \tau')(\tau'') = E_{h}(\tau_{h-2}, \tau')(\tau'') & = \begin{dcases}
E_h(\tau_{h-2}, \tau')(\tau' \oplus \tau_{h-2}) & \text{ if } \tau''_{h-1} = \tau' \oplus \tau_{h-2},\\
0 & \text{ otherwise},
\end{dcases} \\
&  \leq \begin{dcases}
\text{deg}_{\tau' \oplus \tau_{h-2}}(o)  & \text{ if } \tau''_{h-1} = \tau' \oplus \tau_{h-2},\\
0 & \text{ otherwise}.
\end{dcases}
\end{align*}
Together with \eqref{eqn:finiteness-step2-sum} and the property \eqref{eqn:deg-oplus}, this implies that
\begin{align*}
- \sum_{\tau, \tau^\prime \in \T{h-1} \times \Y} & \PIR{h}(\tau, \tau^\prime) \log \cpr{\r{h-1}(\tau^\prime \oplus \tau_{h-2})} \\
& \leq -  \sum_{\substack{\tau_{h-2} \in \T{h-2} \times \Y \\ \tau^\prime \in \T{h-1} \times \Y}} \cpr{\frac{1+\text{deg}_{\tau'}(o)}{\b}}\r{h-1}( \tau^\prime \oplus \tau_{h-2}) \log \cpr{\r{h-1}( \tau^\prime \oplus \tau_{h-2})} \\
& =  -\frac{1}{\b} \sum_{\tilde{\tau} \in \T{h-1}} \text{deg}_{\tilde{\tau}}(o) \r{h-1}(\tilde{\tau}) \log \cpr{\r{h-1}(\tilde{\tau})},  \numberthis \label{eqn:finiteness-step2}
\end{align*}
where the last equality uses Definition \ref{def:oplus}, the property \eqref{eqn:deg-oplus}, and the fact that the set $\{\tau' \oplus \tau'': \tau' \in \T{h-1} \times \Y, \tau'' \in \T{h-2} \times \Y\}$ coincides with the set $\T{h-1}$.
Since $H(\r{h-1}) < \infty$ and $\EDLDRinf < \infty$, it follows from \cite[Lemma 5.4]{BorCap15} that 
\begin{align}
-\sum_{\tilde{\tau} \in \T{h-1}} \text{deg}_{\tilde{\tau}}(o) \r{h-1}(\tilde{\tau}) \log \cpr{\r{h-1}(\tilde{\tau})}  < \infty.
\label{eqn:finiteness-step4}
\end{align}
Combining  \eqref{eqn:finiteness-step1},  \eqref{eqn:finiteness-step2}, and  \eqref{eqn:finiteness-step4}, we arrive at \eqref{eqn:finiteness-step3}. Note that 
\begin{align*}
H(\PIR{h}) & = - \sum_{\tau, \tau^\prime \in \T{h-1} \times \Y} \PIR{h}(\tau, \tau^\prime)\log \cpr{\PIR{h}(\tau, \tau^\prime)}\\
& = - \sum_{\tau, \tau^\prime \in \T{h-1} \times \Y} \PIR{h}(\tau, \tau^\prime) \log \cpr{\PIRstar{h}(\tau, \tau^\prime)} - H(\PIR{h} \| \PIRstar{h}),
\end{align*}
which is finite by the non-negativity of relative entropy and \eqref{eqn:finiteness-step3}. Since $\SBRM{h} = \PIR{h}$ by Lemma \ref{lemma:PI-SBR}, this proves \ref{item:HPIRH}.

Finally, we show \ref{item:HPIRHD}. Let $h \geq 2$. We have
\begin{align*}
H(\PIR{h} \| \PIRstar{h}) & = \sum_{\tau, \tau^\prime \in \T{h-1} \times \Y} \PIR{h}(\tau, \tau^\prime) \log \cpr{\frac{\PIR{h}(\tau, \tau^\prime)}{\PIRstar{h}(\tau, \tau^\prime)}} \\
& = -H(\PIR{h}) - \sum_{\tau, \tau^\prime \in \T{h-1} \times \Y} \PIR{h}(\tau, \tau^\prime) \log \cpr{\PIRstar{h}(\tau, \tau^\prime)},
\end{align*}
which is finite by the non-negativity of Shannon entropy and \eqref{eqn:finiteness-step3}. Since $\SBRM{h} = \PIR{h}$ from Lemma \ref{lemma:PI-SBR}, this proves \ref{item:HPIRHD}.
\end{proof}

\section{Proof of Lemma \ref{lemma:prob-measures-h=1}}
\label{appendix:prob-measure-h=1}
As in Appendix \ref{appendix:lemma-finiteness}, we will refer to quantities defined in 
Section \ref{sec:prelim-pi-eh}, and use  the identity $\pi_\mu = \SBMuM$ from Lemma \ref{lemma:PI-SBR}. Note that Lemma \ref{lemma:PI-SBR} does not depend on any result in Section \ref{sec:alternate-form}, and hence there is no circular reasoning in replacing $\SBMuM$ by $\pi_{\mu}$.

\subsection{Proof of the first assertion of Lemma \ref{lemma:prob-measures-h=1}}
\label{proof:lemma:prob-measures-h=1}
By \eqref{eqn:RP}, showing $\RP_1 \in \P(\T{1})$ is equivalent to showing 
\begin{align}
\Exp_{\eta_1} \sqpr{\prod_{v \in N_o(\TB)} \frac{d\SBRMOne{1}}{d\SBetabarMOne} (\TBOV_0)} & = 1.     \label{eqn:h=1-prod-1}
\end{align}
We first show \eqref{eqn:h=1-prod-1} with $\r{1} =\mu$, $\SBRM{1} = \PIR{1} = \pi_\mu$ and $\SBetabarM = \PIeta$, the latter as justified by Lemma \ref{lemma:PI-SBR}. By Remark \ref{remark:eta-indep}(1), for $i \geq 1$, we have
\begin{align}
\Exp_{\eta_1} \sqpr{\prod_{v \in N_o(\TB)} \frac{d\pi_{\mu}^1}{d\pi_{\eta_1}^1}(\TBOV_{0}) \biggr| \NO = i} & = \prod_{v \in [i]}\Exp_{\eta_1} \sqpr{\frac{d\pi_{\mu}^1}{d\pi_{\eta_1}^1}(\TBOV_{0}) \biggr| \NO = i }. \label{eqn:h=1-prod1-step1}
\end{align}
If $\Law(\TXY) = \eta_1$, by the independence properties in Remark \ref{remark:eta-indep}(1), for any $v \in [i]$, the conditional law of $\TBOV_0$ given $\NO = i \geq 1$ is equal to $\pi_{\eta_1}^1$. Hence, we have
\begin{align*}
\Exp_{\eta_1} \sqpr{\frac{d\pi_{\mu}^1}{d\pi_{\eta_1}^1}(\TBOV_{0}) \biggr| \NO = i } = \Exp_{\pi_{\eta_1}^1} \sqpr{\frac{d\pi_{\mu}^1}{d\pi_{\eta_1}^1} } = 1.
\end{align*}
Together with \eqref{eqn:h=1-prod1-step1} and the convention that a product over the empty set is identically $1$, this implies that 
\begin{align*}
\Exp_{\eta_1} \sqpr{\prod_{v \in N_o(\TB)} \frac{d\pi_{\mu}^1}{d\pi_{\eta_1}^1}(\TBOV_{0}) \biggr| \NO} = 1 \quad \eta_1\text{-a.s.}
\end{align*}
Then \eqref{eqn:h=1-prod-1} follows on taking expectations of both sides, and using $\PIeta = \SBetabarM$ and $\pi_\mu = \SBMuM$ from Lemma \ref{lemma:PI-SBR}.

We now turn to the proof of $\RCP_1 = \MCP \in \P(\T{1})$, which by \eqref{eqn:RCP} is equivalent to showing
\begin{align}
    \Exp_{\eta_1} \sqpr{\prod_{v \in N_o(\TB)} \frac{d\SBRMOneGivenZero{1}}{d\SBetabarMOneGivenZero}(\TBOV_0 | \TBVO_0)} & = 1. \label{eqn:h=1-prod-2}
\end{align}
We show \eqref{eqn:h=1-prod-2} with $\r{1} =\mu$, $\SBRM{1} = \PIR{1} = \pi_\mu$ and $\SBetabarM = \PIeta$. Define $\PIMUT \in \P((\X \times \Y)\times (\X \times \Y))$ by
\begin{align}
\PIMUT(\tau, \tau^\prime) := \pi_{\eta_1}^o(\tau') \pi_{\mu}^{1|o}(\tau | \tau'), \, \tau, \tau' \in \X \times \Y,
\label{eqn:SBRRMT}
\end{align}
where $\PIeta^o$ is the second marginal of $\PIeta$ and $\pi_{\mu}^{1|o}$ is the conditional law of $\TB(1 \setminus  o)$ given $\TB(o \setminus 1)$ when $\Law(\TB) = \mu$. Note that, for any $\tau \in \T{1}$ such that $\text{deg}_\tau(o) \geq 1$, by disintegration, we have
\begin{align*}
\frac{d\pi_{\mu}^{1|o}}{d\pi_{\eta_1}^{1|o}}(\TOV_0 | \TVO_0) = \frac{d\PIMUT}{d\PIeta}(\TOV_0), \TVO_0)\quad \text{ for all }  v \in N_o(\tau).
\end{align*}
Therefore, using the properties of conditional expectations, we have
\begin{align*}
\Exp_{\eta_1} & \sqpr{\prod_{v \in N_o(\TB)} \frac{d\pi_{\mu}^{1|o}}{d\pi_{\eta_1}^{1|o}} (\TBOV_0 | \TBVO_0)} \\
& = \Exp_{\eta_1} \sqpr{\prod_{v \in N_o(\TB)}\frac{d\PIMUT}{d\PIeta}(\TBOV_0 , \TBVO_0) }\\
 &= \Exp_{\eta_1}\sqpr{ \Exp_{\eta_1} \sqpr{ \prod_{v \in N_o(\TB)}\frac{d\PIMUT}{d\PIeta}  (\TBOV_0 , \TBVO_0) \biggr| \TBUO_0, u \in N_o(\TB)}}. \numberthis \label{eqn:h=1-prod2-step1} 
\end{align*}
By Remark \ref{remark:eta-indep}(2),  it follows that
\begin{align*}
\Exp_{\eta_1} &  \sqpr{\prod_{v \in N_o(\TB)}\frac{d\PIMUT}{d\PIeta}(\TBOV_0 , \TBVO_0)  \biggr| \TBUO_0, u \in N_o(\TB)} \\
&= \prod_{v \in N_o(\TB)}  \Exp_{\eta_1} \sqpr{\frac{d\PIMUT}{d\PIeta}(\TBOV_0 , \TBVO_0) \biggr| \TBVO_0}. \numberthis \label{eqn:h=1-prod2-step2}
\end{align*}
Note that the conditional joint law of $( \TBOV_0, \TBVO_0)$ given $\TBVO_0 = \tau'$ when $\Law(\TXY) = \eta_1$ is the same as the conditional joint law of $(\SB, \SB')$ given $\SB' = \tau'$ when $\Law(\SB, \SB') = \PIeta$; this follows from the independence properties in Remark \ref{remark:eta-indep} and the definition of $\PIeta$. Therefore, it follows that
\begin{align*}
\Exp_{\eta_1} \sqpr{\frac{d\PIMUT}{d\PIeta}(\TBOV_0 , \TBVO_0) \biggr| \TBVO_0 = \tau'} = \Exp_{\PIeta} \sqpr{\frac{d\PIMUT}{d\PIeta}(\SB, \SB')  \biggr| \SB' = \tau'}. \numberthis \label{eqn:h=1-prod2-step3}
\end{align*}

We now argue that the conditional expectation on the right-hand side of the above display is equal to $1$ $\PIeta$-a.s. To this end, let $A = \{(\sigma, \sigma') \in (\X \times \Y)^2: \sigma' = \tau'\}$. Then from the definition of $\PIMUT$ in \eqref{eqn:SBRRMT}, we see that $\PIeta(A) = \PIMUT(A)$. Therefore, we have
\begin{align*}
\Exp_{\PIeta}\sqpr{\indf\{A\}} = \Exp_{\PIMUT}\sqpr{\indf\{A\}} = \Exp_{\PIeta}\sqpr{ \frac{d\PIMUT}{d\PIeta}\indf\{A\}},
\end{align*}
which shows that
\begin{align*}
\Exp_{\PIeta} \sqpr{\frac{d\PIMUT}{d\PIeta}(\SB, \SB') \biggr| \SB'} = 1 \quad  \PIeta\text{-a.s.}
\end{align*}
When combined with \eqref{eqn:h=1-prod2-step1}, \eqref{eqn:h=1-prod2-step2}, \eqref{eqn:h=1-prod2-step3}, and the relations $\SBMuM 
= \pi_\mu$ and $\SBetabarM = \pi_\eta$ from Lemma \ref{lemma:PI-SBR}, this proves  \eqref{eqn:h=1-prod-2}.
\qed

\subsection{Proof of the second assertion of Lemma \ref{lemma:prob-measures-h=1}}
\label{proof:lemma:abs-cont-h=1}
We first show that $\mu \ll \MP$. Suppose that $\tau \in \T{1}$ is such that $\MP(\tau) = 0$. From the definition of $\MP$ in \eqref{eqn:RP}, there exists some $v \in N_o(\tau)$ such that 
\begin{align*}
\frac{d\pi_{\mu}^1}{d\pi_{\eta_1}^1}(\TOV_0) = 0
\end{align*} 
If we set $\tau^1 := \TOV_0$ and $\tau^o := \TVO_0$, then by \eqref{eqn:def-Eh} it follows that  $E_1(\tau^1, \tau^o)(\tau) \geq 1$. On the other hand, the last display shows that 
$
\pi_{\mu}^1(\tau^1) = 0.
$
Together with the definition of $\pi_\mu$ in \eqref{eqn:PIRh}, note that
\begin{align*}
0= \pi_\mu^1(\tau^1) \geq \pi_\mu(\tau^1, \tau^o) =  \frac{1}{\b} \Exp_{\mu}\sqpr{E_1(\tau^1, \tau^o)(\TB)} \geq \frac{1}{\b}\mu(\tau). \\
\end{align*}
This implies  $\mu(\tau) = 0$ and proves  $\mu \ll \MP$. 

Next, we show $\mu \ll \MCP$ using an analogous argument. Suppose that $\tau \in \T{1}$ is such that $\MCP(\tau) =0$. From the definition  of $\MCP$ in \eqref{eqn:RCP}, there exists some $v \in N_o(\tau)$ such that 
\begin{align*}
\frac{d\pi_{\mu}^{1|o}}{d\pi_{\eta_1}^{1|o}}(\TOV_0 | \TVO_0) = 0.
\end{align*}
Setting $\tau^1 := \TOV_0$ and $\tau^o := \TVO_0$, \eqref{eqn:def-Eh} implies $E_1(\tau^1, \tau^o)(\tau) \geq 1$. Also,  using the last display, we obtain 
\begin{align*}
\frac{d\pi_\mu}{d\PIeta}(\tau^1, \tau^o)  = \frac{d\pi_{\mu}^o}{d\pi_{\eta_1}^o}(\tau^o) \times \frac{d\pi_{\mu}^{1|o}}{d\pi_{\eta_1}^{1|o}}(\tau^1 | \tau^o) = 0,
\end{align*}
and hence, 
$
\pi_\mu(\tau^1, \tau^o) = 0.
$
Together with the definition of $\pi_\mu$ in \eqref{eqn:PIRh}, this implies
\begin{align*}
0 =\pi_\mu(\tau^1, \tau^o)  = \frac{1}{\b} \Exp_{\mu}\sqpr{E_1(\tau^1, \tau^o)(\TB)} \geq \frac{1}{\b} \mu(\tau),
\end{align*}
which shows that $\mu(\tau) = 0$. Therefore, we conclude that $\mu \ll \MCP$. 
\qed

\section{Proof of Lemma \ref{lemma:preliminary-prob-prop}}
\label{appendix:lemma-Rstarabscont}
We now prove the four properties listed in Lemma \ref{lemma:preliminary-prob-prop}. We shall use the covariance measure $\PIR{h}$ introduced in Definition \ref{def-PIRh} instead of the size-biased marginals $\SBRM{h}$ throughout this appendix.

\subsection{Proof of \ref{lemma:Rstarabscont}}
Suppose that $\Rstar{h}(\tau) = 0$ for some $\tau \in \T{h}$. We proceed to show that then $\r{h}(\tau)  = 0$. 

From  \eqref{eqn:UGW-marginal}, since the $(h-1)$-neighborhood marginals of both $\r{h}$ and $\Rstar{h}$ are the same, if $\r{h-1}(\tau_{h-1}) = 0$, then we have $\r{h}(\tau) = 0$. On the other hand, suppose that $\r{h-1}(\tau_{h-1}) > 0$. By the definition of $\Rstar{h}$ in  \eqref{eqn:Rstarh}, the fact that  $\Rstar{h}(\tau) = 0$ implies there exists $u \in N_o(\tau)$ such that
\begin{align*}
\PU{h-1}(\TOU_{h-2}, \TUO_{h-2})(\TOU_{h-1}) = 0.
\end{align*}
Set $\tau^1 :=\TOU_{h-1}$ and $\tau^o := \TUO_{h-2}$. Using the definition of $\PU{h}(\tau^1, \tau^o)$ in \eqref{eqn:PU}, and noting that $E_{h-1}(\tau^o, \tau^1_{h-2})(\tau^1 \oplus \tau^o) \geq 1$, the last display implies that 
\begin{align}
\r{h-1}(\tau^1 \oplus \tau^o) = 0.
\label{eqn:RhRstarh-step1}
\end{align}

Define the function $f = f_{\tau^1, \tau^o} : \Gdstar[\X ; \Y] \to \R_+$ by
\begin{align*}
f(\TBdef, o_1, o_2) = \begin{cases}
1 & \text{ if } o_1 \in N_{\TBdef}(o_2), \, \TBdef(o_2 \setminus o_1)_{h-1} = \tau^1 \text{ and } \TBdef(o_1 \setminus o_2)_{h-2} = \tau^o,\\
0 & \text{ otherwise},
\end{cases}
\end{align*}
for $(\TB, o_1, o_2) \in \Gdstar[\X; \Y]$.
On the one hand, since $f(\tau, o, u) = 1 $ and $u \in N_o(\tau)$, we have 
\begin{align}
\Exp_{\r{h}} \sqpr{\sum_{v \in N_o(\TB)} f(\TB, o, v)}  \geq \r{h}(\tau).
\label{eqn:RhRstarh-step3}
\end{align}
On the other hand, we have 
\begin{align}
\Exp_{\r{h}} \sqpr{\sum_{v \in N_o(\TB)} f(\TB, v,  o)} & = \Exp_{\r{h}} \sqpr{\sum_{v \in N_o(\TB)}\indf\{\TB(o \setminus v)_{h-1} = \tau^1, \TB(v \setminus o)_{h-2} = \tau^o\} }.
\label{eqn:RhRstarh-step2}
\end{align}
Note that
\begin{align*}
\sum_{v \in N_o(\TB)}\indf\{\TB(o \setminus  v)_{h-1} = \tau^1, \TB(v \setminus o)_{h-2} = \tau^o\} \geq 1 \text{ if and only if } \TB_{h-1} = \tau^1 \oplus \tau^o.
\end{align*}
Substituting this back into \eqref{eqn:RhRstarh-step2}, then  noting that $\tau^1 \oplus \tau^o \in \T{h-1}$ and using \eqref{eqn:RhRstarh-step1},  it follows that
\begin{align*}
\Exp_{\r{h}} \sqpr{\sum_{v \in N_o(\TB)} f(\TB, v,  o)} & = \Exp_{\r{h-1}} \sqpr{\sum_{v \in N_o(\TB)} f(\TB, v,  o)} =  \r{h-1}(\tau^1 \oplus \tau^o) E_{h-1}(\tau^o, \tau^1)(\tau^1 \oplus \tau^o) =0 . \numberthis \label{eqn:RhRstarh-step4}
\end{align*}
Since $\rho$ is unimodular, applying \eqref{eqn:unimod} with $f=f_{\tau^1, \tau^o}$, we conclude that the left-hand sides of \eqref{eqn:RhRstarh-step4} and  \eqref{eqn:RhRstarh-step3} are equal. Hence,  $\r{h} (\tau)= 0$, which proves $\r{h} \ll \Rstar{h}$.

Next, we show $\SBRM{h} \ll \SBRstarM{h}$. Since $\r{h} \ll \Rstar{h}$, from the definition in \eqref{eqn:SBRinf}, we conclude that
\begin{align*}
\SBRM{h} \sim \r{h}^{1,o}(\cdot \mid \NO \geq 1) \ll (\Rstar{h})^{1,o}(\cdot \mid \NO \geq 1) \sim \SBRstarM{h}.
\end{align*}
Here, `$\sim$' denotes the equivalence of measures. This proves \ref{lemma:Rstarabscont} of Lemma \ref{lemma:preliminary-prob-prop}.
\qed

\subsection{Proof of  \ref{lemma:PIRRstarMarginal}}
 \ref{lemma:PIRRstarMarginal} follows immediately from Lemma \ref{lemma:PI-SBR} and Lemma \ref{lemma:pih_pihstar_marginal} below.
\begin{lemma} For any  $\tau \in \T{h-2} \times \Y$ and  $\tau^\prime \in \T{h-1} \times \Y$, we have $\PIR{h}(\tau, \tau^\prime) = \PIRstar{h}(\tau, \tau^\prime)$. Furthermore, we have $\PIR{h}^1 = \PIRstar{h}^1$.
\label{lemma:pih_pihstar_marginal}
\end{lemma}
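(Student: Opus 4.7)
The proof will rest on two facts: (i) $\r{h}$ is admissible (by Remark~\ref{remark:admissibility}, using unimodularity of $\rho$), and $\Rstar{h}$ is likewise admissible because it is the depth-$h$ marginal of the full unimodular extension $\UGWR{h-1}$ of $\r{h-1}$; and (ii) $(\r{h})_{h-1} = (\Rstar{h})_{h-1} = \r{h-1}$ by \eqref{eqn:UGW-marginal}.

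I would first dispatch the second assertion $\PIR{h}^1 = \PIRstar{h}^1$. By admissibility of both measures, it suffices to show $\PIR{h}^o = \PIRstar{h}^o$. For any $\tau \in \T{h-1}\times\Y$, the indicator $\indf\{\TB(o\setminus v)_{h-1} = \tau\}$ is a function of $\TB_{h-1}$ alone, since removing the $v$-branch from $\TB_{h-1}$ and truncating at depth $h-1$ already yields $\TB(o\setminus v)_{h-1}$. Consequently, by (ii), $\PIR{h}^o(\tau)=\frac{1}{\b}\Exp_{\r{h-1}}[\sum_{v\in N_o(\TB)}\indf\{\TB(o\setminus v)_{h-1}=\tau\}]=\PIRstar{h}^o(\tau)$, which yields the claim.

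For the first assertion, by symmetry of $\PIR{h}$ and $\PIRstar{h}$ it is enough to prove $\PIR{h}(\tau',\tau)=\PIRstar{h}(\tau',\tau)$ when $\tau\in\T{h-2}\times\Y$ and $\tau'\in\T{h-1}\times\Y$. The plan is to expand $\PIRstar{h}(\tau',\tau)$ using \eqref{eqn:Rstarh}, conditioning on $\TB_{h-1}$ (of law $\r{h-1}$), and noting that under $\Rstar{h}$ the conditional probability that the $v$-side extends so as to realize $\TB(v\setminus o)_{h-1}=\tau'$ is $\PU{h-1}(\TBOV_{h-2},\TBVO_{h-2})(\tau')$. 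Substituting the explicit formula \eqref{eqn:PU} for $\PU{h-1}$ and exploiting that for $\tau\in\T{h-2}\times\Y$ the condition $\TB(o\setminus v)_{h-1}=\tau$ forces $\TB(o\setminus v)_{h-2}=\tau$ together with the absence of depth-$(h-1)$ vertices on the $o$-side, the combinatorial weights $E_{h-1}(\tau,\tau'_{h-2})(\tau'\oplus\tau)$ and normalizer $\b\PIR{h-1}(\tau,\tau'_{h-2})$ can be matched, via a mass transport step on $\r{h-1}$ (admissible by unimodularity of $\rho$), to the direct evaluation of $\PIR{h}(\tau',\tau)$ under $\r{h}$. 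The main obstacle lies precisely here: carefully collapsing the combinatorial factors from \eqref{eqn:PU} and reconciling the conditional law of the $v$-side at depth $h-1$ under $\r{h}$ with the $\PU$-based extension under $\Rstar{h}$---a subtlety tied to the fact that the event in question depends on depth-$h$ information, which the unimodular extension is designed to reconstruct from the $(h-1)$-marginal.
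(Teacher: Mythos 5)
Your handling of the \emph{second} assertion is correct and is essentially the paper's own argument: admissibility of $\r{h}$ and of $\Rstar{h}$ (the latter since $\Rstar{h}=(\UGWR{h-1})_h$ with $\UGWR{h-1}$ unimodular), the observation that $\sum_{v\in N_o(\TBdef)}\indf\{\TBdef(o\setminus v)_{h-1}=\tau\}$ is a function of $\TBdef_{h-1}$, and the marginal identity \eqref{eqn:UGW-marginal} (plus a monotone-convergence interchange of sum and expectation, which the paper records).

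For the \emph{first} assertion, however, there is a genuine gap, and it sits exactly where you flag ``the main obstacle'': the matching of the $\PU{h-1}$-weights from \eqref{eqn:PU} against a direct evaluation of $\PIR{h}$ is never carried out, so no proof is actually given. More importantly, the route is misdirected. The paper's proof is a one-line measurability observation: the hypothesis $\tau\in\T{h-2}\times\Y$ is used only through the fact that it constrains the branch side of the count through depth $h-2$, so that $E_h(\tau,\tau')(\TBdef)$ becomes a function of $\TBdef_{h-1}$ alone (both the branch to depth $h-2$ and the root side to depth $h-1$ are visible in $\TBdef_{h-1}$); then, by \eqref{eqn:PIRh} and \eqref{eqn:UGW-marginal}, $\PIR{h}(\tau,\tau')=\tfrac1\b\Exp_{\r{h}}[E_h(\tau,\tau')]=\tfrac1\b\Exp_{\r{h-1}}[E_h(\tau,\tau')]=\tfrac1\b\Exp_{\Rstar{h}}[E_h(\tau,\tau')]=\PIRstar{h}(\tau,\tau')$. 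No expansion of $\Rstar{h}$ via \eqref{eqn:Rstarh}, no use of \eqref{eqn:PU}, and no mass-transport step is needed for this part.

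By contrast, your plan reads the hypothesis ``pointwise,'' i.e.\ as forcing the relevant side to have \emph{no} vertices at the deepest level (you write that $\TB(o\setminus v)_{h-1}=\tau$ forces ``absence of depth-$(h-1)$ vertices on the $o$-side''). That stronger, depth-$h$-sensitive version of the identity is false in general, because $\Rstar{h}$ genuinely redistributes the depth-$h$ layer while preserving only the depth-$(h-1)$ marginal. Concretely, take $h=2$, no marks, and $\rho$ the uniformly rooted path on three vertices: the pair (single vertex, single vertex) has $\PIR{2}$-mass $0$ (the rooted path never has a degree-one root whose unique child is childless), yet it has strictly positive $\PIRstar{2}$-mass, since $\Rstar{2}$ assigns positive probability to the two-vertex tree. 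So the equality you set out to prove, with both coordinates pinned at full depth, cannot be established by any bookkeeping with $\PU{h-1}$; the lemma's identity is precisely the statement that survives because the depth-$(h-2)$ argument fixes its coordinate only up to what is visible in $\TBdef_{h-1}$, where $\r{h}$ and $\Rstar{h}$ agree.
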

\begin{proof}    
Let $\tau \in \T{h-2} \times \Y$ and  $\tau^\prime \in \T{h-1} \times \Y$. We first show that $\PIR{h}(\tau, \tau^\prime) = \PIRstar{h}(\tau, \tau^\prime)$. For any $\TBdef  \in \T{h}$, note from \eqref{eqn:def-Eh} that $E_h(\tau, \tau^\prime)(\TBdef)$ is a function of $\TBdef_{h-1}$.  Hence, recalling from \eqref{eqn:UGW-marginal} that the $(h-1)$-neighborhood marginal of  $\Rstar{h}$ is equal to $\r{h-1}$ and using the definition $\PIR{h}$ in \eqref{eqn:PIRh}, we see that
\begin{align*}
\PIR{h}(\tau, \tau^\prime) & = \frac{1}{\b} \Exp_{\r{h}}\sqpr{E_h(\tau, \tau^\prime)(\TXY)} =  \frac{1}{\b} \Exp_{\r{h-1}}\sqpr{E_h(\tau, \tau^\prime)(\TXY)}  =  \frac{1}{\b} \Exp_{\Rstar{h}}\sqpr{E_h(\tau, \tau^\prime)(\TXY)}=  \PIRstar{h}(\tau, \tau^\prime).
\end{align*}
This shows the first assertion.

Next, we show that $\PIRM{h} = \PIRstarM{h}$. Let $\tau \in \T{h-1} \times \Y$. First taking the marginal, then using the admissibility of $\r{h}$, which holds by Remark \ref{remark:admissibility} due to the assumed unimodularity of $\rho$, and substituting the definition of $\PIR{h}$ from \eqref{eqn:PIRh}, this implies
\begin{align}
\PIRM{h}(\tau) & = \sum_{ \tau^\prime \in \T{h-1} \times \Y} \PIR{h}(\tau, \tau^\prime) = \sum_{ \tau^\prime \in \T{h-1} \times \Y} \PIR{h}(\tau^\prime, \tau) = \sum_{ \tau^\prime \in \T{h-1} \times \Y} \frac{1}{\b} \Exp_{\r{h}} \sqpr{E_h(\tau^\prime, \tau)(\TXY)}.
\label{eqn:RPhPIRstar-step1}
\end{align}
On the other hand, since $\Rstar{h}$ is also admissible (which follows from the unimodularity of $\UGWR{h}$ in   Definition \ref{def:unimod-full-extension}  \cite[Lemma 3]{DelAna19} together with the fact that   $\Rstar{h} = (\UGWR{h-1})_{h}$ and Remark \ref{remark:admissibility}),  \eqref{eqn:RPhPIRstar-step1} also holds with $\r{h}$ replaced with $\Rstar{h}$.
Since $E_h(\tau, \tau^\prime)(\cdot) \geq 0$,  by monotone convergence,
\begin{align*}
\Exp_{\r{h}} \sqpr{ \sum_{\tau^\prime \in \T{h-1} \times \Y} E_h(\tau^\prime, \tau)(\TXY) } = \sum_{\tau^\prime \in \T{h-1} \times \Y}\Exp_{\r{h}} \sqpr{E_h(\tau^\prime, \tau)(\TXY)}.
\end{align*}
Thus, to complete the proof, it suffices to show that 
\begin{align*}
\Exp_{\r{h}} \sqpr{ \sum_{\tau^\prime \in \T{h-1} \times \Y} E_h(\tau^\prime, \tau)(\TXY) } = \Exp_{\Rstar{h}} \sqpr{ \sum_{\tau^\prime \in \T{h-1} \times \Y} E_h(\tau^\prime, \tau)(\TXY) }.
\end{align*}
But this is an immediate consequence of the fact that $\sum_{\tau^\prime \in \T{h-1} \times \Y} E_h(\tau^\prime, \tau)(\TBdef)$ depends on $\TBdef$ only through $\TBdef_{h-1}$, and the fact that \eqref{eqn:UGW-marginal} implies $(\Rstar{h})_{h-1} = \r{h-1}$.
\end{proof}

\subsection{Proof of  \ref{lemma:RCPh-prob-measure}}
\label{proof:lemma:RCPh-prob-measure}
Since $\RCP_h \ll \Rstar{h}$ by \eqref{eqn:RCPh}, in order  to show that $\RCP_h$ is a probability measure, it suffices to show that 
\begin{align}
\Exp_{\Rstar{h}}\sqpr{\frac{d\RCP_h}{d\Rstar{h}} (\TXY)} = 1.
\label{eqn:RCPh-abs-cont-step0}
\end{align}
First note that
\begin{align}
\Exp_{\Rstar{h}}\sqpr{\frac{d\RCP_h}{d\Rstar{h}}(\TXY)} & = \Exp_{\Rstar{h}} \sqpr{\Exp_{\Rstar{h}}\sqpr{\frac{d\RCP_h}{d\Rstar{h}}(\TXY) \biggr| \TXY_{h-1} } }.
\label{eqn:RCPh-abs-cont-step5}
\end{align}
The definition of $\Rstar{h}$ in \eqref{eqn:Rstarh} shows  that, if  $\Law(\TB)= \Rstar{h}$,  the random variables \[\pr{(\TBOV_{h-1}, \TBVO_{h-1}), \, v \in N_o(\TB)}\] are conditionally independent given $\TB_{h-1}$. Therefore, using the definition of $\RCP_h$ in \eqref{eqn:RCPh} and the fact that $\SBRM{h} = \PIR{h}$, we conclude that
\begin{align*}
\Exp_{\Rstar{h}}\sqpr{\frac{d\RCP_h}{d\Rstar{h}}(\TXY) \biggr| \TXY_{h-1} } 
& = \prod_{v \in N_o(\TB)} \Exp_{\Rstar{h}} \sqpr{\frac{d\PIR{h}}{d\PIRstar{h}} (\TBOV_{h-1}, \TBVO_{h-1}) \biggr| \TXY_{h-1}}. \numberthis \label{eqn:RCPh-abs-cont-step4}
\end{align*}
Let $\tau := \TBOV_{h-2}$ and $\tau' := \TBVO_{h-2}$. On the one hand, from the definition of $\Rstar{h}$ in \eqref{eqn:Rstarh}, under $\Rstar{h}$, the conditional joint law of $(\TBOV_{h-1}, \TBVO_{h-1})$ given $\TXY_{h-1}$ is $\PU{h-1}(\tau, \tau^\prime)(\TBOV_{h-1})$, which by \eqref{eqn:PU} is equal to 
\begin{align}
\frac{1}{\b \PIR{h-1}(\tau, \tau^\prime)} \r{h-1}(\TBOV_{h-1} \oplus \tau') E_{h-1}(\tau^\prime, \tau)(\TBOV_{h-1} \oplus \tau^\prime),
\label{eqn:RCPh-abs-cont-step1}
\end{align}
where $E_h$ is given by \eqref{eqn:def-Eh}. On the other hand, if $(\SB, \SB')$ has law $\PIRstar{h}$, we have
\begin{align*}
\PIRstar{h}(\SB |(\SB_{h-2}, \SB'_{h-2}) = (\tau,\tau' )) & = \frac{\PIRstar{h}(\SB, \tau')}{\PIRstar{h}(\tau, \tau^\prime)}\\
& = \frac{\PIRstar{h}(\SB, \tau')}{\PIR{h-1}(\tau, \tau^\prime)} \\
& = \frac{\PIRstar{h}(\tau', \SB)}{\PIR{h-1}(\tau, \tau^\prime)}, \numberthis \label{eqn:RCPh-abs-cont-step3}
\end{align*}
where the second equality above follows from the fact that $\tau$ and $\tau'$ are $\T{h-2} \times \Y$ elements and the $(h-1)$-neighborhood marginal of $\Rstar{h}$ equals $\r{h-1}$ by \eqref{eqn:UGW-marginal}, and the third equality holds since $\Rstar{h}$ is admissible. Note that, since $\TBOV_{h-2} = \tau$, we have  $E_h(\tau', \TBOV_{h-1})(\TBOV_{h-1} \oplus \tau') = E_h(\tau', \tau)(\TBOV_{h-1} \oplus \tau')$.  Therefore, from  \ref{lemma:PIRRstarMarginal} of Lemma \ref{lemma:preliminary-prob-prop}, the definition of $\PIR{h}$ in \eqref{eqn:PIRh}, and the fact that $\TOV_{h-1} \oplus \tau' \in \T{h-1}$, we obtain
\begin{align*}
\PIRstar{h}(\tau', \TBOV_{h-1}) & = \PIR{h}(\tau', \TBOV_{h-1}) \\
& = \frac{1}{\b} \r{h}(\TBOV_{h-1} \oplus \tau') E_h(\tau', \TBOV_{h-1})(\TBOV_{h-1} \oplus \tau') \\
& = \frac{1}{\b} \r{h-1}(\TOV_{h-1} \oplus \tau') E_{h-1}(\tau^\prime, \tau)(\TBOV_{h-1} \oplus \tau').
\end{align*}
Together with \eqref{eqn:RCPh-abs-cont-step3} this yields the relation
\begin{align*}
\PIRstar{h}(\SB = \TBOV_{h-1}& |(\SB_{h-2}, \SB'_{h-2}) = (\tau, \tau^\prime)) \\
& = \frac{1}{\b \PIR{h-1}(\tau, \tau^\prime)} \r{h-1}(\TBOV_{h-1} \oplus \tau') E_{h-1}(\tau^\prime, \tau)(\TBOV_{h-1} \oplus \tau'). \numberthis \label{eqn:RCPh-abs-cont-step2}
\end{align*}
Let $\tau''  = \TBVO_{h-1}$. By the definition of $\Rstar{h}$ in \eqref{eqn:Rstarh}, if  $\Law(\TB) = \Rstar{h}$, for any $v \in N_o(\TB)$, the random variables $\TBOV_{h-1} \setminus \TBOV_{h-2}$ and $\TBVO_{h-1} \setminus \TBVO_{h-2}$ are conditionally independent given $\TXY_{h-1}$. Therefore, noting that $\PIRstar{h}$ is the law of $(\TBOOne_{h-1}, \TBOneO_{h-1})$ when $\Law(\TXY) = \SBRstar{h}$ (see Lemma \ref{lemma:PI-SBR}), if $\Law(\SB, \SB') = \PIRstar{h}$, 
the random variable  $\SB$ is conditionally independent of $\SB'$ given $\SB_{h-2} = \tau$ and $\SB'_{h-2} = \tau'$.  Hence, from  \eqref{eqn:RCPh-abs-cont-step1} and \eqref{eqn:RCPh-abs-cont-step2}, it follows that the conditional joint law of $(\TBOV_{h-1}, \TBVO_{h-1})$ given $\TXY_{h-1}$ when $\Law(\TXY) = \Rstar{h}$ equals the conditional  joint law of $(\SB, \SB')$ given $(\SB_{h-2}, \SB') = (\tau, \tau'')$ when $\Law(\SB, \SB') = \PIRstar{h}$. This implies
\begin{align*}
\Exp_{\Rstar{h}} \sqpr{\frac{d\PIR{h}}{d\PIRstar{h}} (\TBOV_{h-1}, \TBVO_{h-1}) \biggr| \TXY_{h-1}} & = \Exp_{\PIRstar{h}} \sqpr{\frac{d\PIR{h}}{d\PIRstar{h}} (\SB, \SB') \biggr| (\SB_{h-2}, \SB') = (\tau, \tau'')}. \numberthis \label{eqn:eqn:RCPh-abs-cont-step7}
\end{align*}

We now show that the above conditional expectation equals $1$ $\PIRstar{h}$-a.s. Let $A = \{(\sigma, \sigma') \in (\T{h-1} \times \Y)^2 : \sigma_{h-2} = \tau, \sigma' = \tau''\}.$ From  \ref{lemma:PIRRstarMarginal} of Lemma \ref{lemma:preliminary-prob-prop}, it follows that $\PIR{h}(A) = \PIRstar{h}(A)$. Therefore,
$
\Exp_{\PIRstar{h}}\sqpr{\indf\{A\}}  = \Exp_{\PIR{h}}\sqpr{\indf\{A\}}  =  \Exp_{\PIRstar{h}}\sqpr{ \frac{d\PIR{h}}{d\PIRstar{h}} \indf\{A\}},
$
which shows that
\begin{align*}
\Exp_{\PIRstar{h}} \sqpr{\frac{d\PIR{h}}{d\PIRstar{h}} (\SB, \SB') \biggr| (\SB_{h-2}, \SB')} = 1 \, \, \PIRstar{h}\text{-a.s.} 
\end{align*}
Hence, using \eqref{eqn:eqn:RCPh-abs-cont-step7} and the above display, \eqref{eqn:RCPh-abs-cont-step4} becomes 
\begin{align*}
\Exp_{\Rstar{h}}\sqpr{\frac{d\RCP_h}{d\Rstar{h}}(\TXY) \biggr| \TXY_{h-1} } =1.
\end{align*}
Together with  \eqref{eqn:RCPh-abs-cont-step5} this implies \eqref{eqn:RCPh-abs-cont-step0}, as desired. 
\qed

\subsection{Proof of  \ref{lemma:RCPh-abs-cont}}
\label{proof:lemma:RCPh-abs-cont}
Suppose that $\RCP_h(\tau) = 0$ for some $\tau \in \T{h}$. From the definition of $\RCP_h$ in \eqref{eqn:RCPh}, there exists $u \in N_o(\tau)$ such that 
\begin{align*}
\frac{d\PIR{h}}{d\PIRstar{h}}(\TOU_{h-1}, \TUO_{h-1}) = 0.
\end{align*}
Let $\tau^1 := \TOU_{h-1}$ and $\tau^o := \TUO_{h-1}$. 
Then $E_h(\tau^1, \tau^o) (\tau) \geq 1$ by \eqref{eqn:PIRh}. Together with the last display, this implies that 
$
0 = \PIR{h}(\tau^1, \tau^o) = \frac{1}{\b} \Exp_{\r{h}} \sqpr{E_h(\tau^1, \tau^o)(\TB)} \geq \frac{1}{\b} \r{h}(\tau).
$
Thus, $\r{h}(\tau) = 0$, and it follows that $\r{h} \ll \RCP_h$.
\qed

\bibliographystyle{plain}
\bibliography{Report.bib}

\end{document}